\newcommand{\ubar}{\bar u}
\numberwithin{equation}{section}
\theoremstyle{plain}
\newtheorem{theorem}{Theorem}[section]
\newtheorem{proposition}[theorem]{Proposition}
\newtheorem{lemma}[theorem]{Lemma}
\theoremstyle{definition}
\newtheorem{definition}[theorem]{Definition}
\theoremstyle{remark}
\newtheorem{remark}[theorem]{Remark}
\theoremstyle{definition}
\theoremstyle{remark}
\newcommand{\eps}{\varepsilon} 
\newcommand{\weakto}{ \rightharpoonup}  
\newcommand{\stress}{\boldsymbol{\sigma}} \newcommand{\strain}{\boldsymbol{\epsilon}} 
\newcommand{\FF}{\mathbf{F}}
\newcommand{\tr}{\mathrm{tr}}
\newcommand{\EE}{\mathbf{E}}
\newcommand{\R}{\mathbb{R}}
\newcommand{\E}{\mathcal{E}}
\newcommand{\C}{\mathbb{C}}
\newcommand{\di}{\mathrm{d}}
\newcommand{\HH}{\mathcal{H}}
\newcommand{\Om}{\Omega}
\newcommand{\F}{\mathcal{F}}
\newcommand{\D}{\mathcal{D}}
\newcommand{\UU}{\mathcal{U}}
\newcommand{\coloneq }{:=}
\newcommand{\abs}[1]{\lvert #1 \rvert}
\newcommand{\norm}[1]{\lVert #1 \rVert}
\newcommand{\argmax}{\mathrm{argmax}}
\DeclareMathOperator*{\argmin}{arg\,min}
\title{Irreversibility and alternate minimization in phase field fracture: a viscosity approach}
\author[S. Almi]{Stefano Almi}
\address[Stefano Almi]{University of Vienna, Oskar-Morgenstern-Platz 1, 1090 Vienna, Austria}
\email{stefano.almi@univie.ac.at}
\begin{document}
\maketitle

\begin{abstract}
\noindent This work is devoted to the analysis of convergence of an alternate (staggered) minimization algorithm in the framework of phase field models of fracture. The energy of the system is characterized by a nonlinear splitting of tensile and compressive strains, featuring non-interpenetration of the fracture lips. The alternating scheme is coupled with an $L^{2}$-penalization in the phase field variable, driven by a viscous parameter~$\delta>0$, and with an irreversibility constraint, forcing the monotonicity of the phase field only w.r.t.~time, but not along the whole iterative minimization. We show first the convergence of such a scheme to a viscous evolution for $\delta>0$ and then consider the vanishing viscosity limit $\delta\to 0$.
\end{abstract}

{\small

\bigskip
\keywords{\textbf{Keywords:} Phase field, fracture mechanics, alternate minimization, vanishing viscosity.}

\bigskip
\subjclass{\textbf{MSC 2010:} 
35Q74,    
49J45,     
74R05,    
74R10.	
\bigskip
\bigskip

\section{Introduction}


In the seminal work~\cite{MR1745759} the quasi-static propagation of brittle fractures in linearly elastic bodies is approximated in terms of equilibrium states of the \emph{Ambrosio-Tortorelli functional}
\begin{equation}\label{intro1}
\mathcal{G}_{\varepsilon}(u,z) \coloneq \tfrac{1}{2} \int_{\Om} (z^{2}+\eta_{\varepsilon}) \stress(u){\,:\,}\strain(u)\,\di x + G_{c} \int_{\Om} \varepsilon |\nabla{z}|^{2} + \tfrac{1}{4\varepsilon} (z-1)^{2}\,\di x \,, 
\end{equation}
where~$\Om$ is an open bounded subset of~$\R^{n}$ with Lipschitz boundary~$\partial\Om$, $u\in H^{1}(\Om;\R^{n})$ is the displacement,~$\strain(u)$ denotes the symmetric part of the gradient of~$u$,~$\stress(u):=\mathbb{C}\strain(u)$ is the stress,~$\mathbb{C}$ being the usual elasticity tensor,~$\varepsilon$ and~$\eta_{\varepsilon}$ are two small positive parameters, and~$G_{c}$  is the toughness, a positive constant related to the physical properties of the material under consideration (from now on we impose $G_{c}=1$). The so-called \emph{phase field} function~$z\in H^{1}(\Om)$ is supposed to take values in~$[0,1]$, where~$z(x)=1$ if the material is completely sound at~$x$, while $z(x)=0$ means that the elastic body~$\Om$ has developed a crack at~$x$. Hence, the zero level set of~$z$ represents the fracture and~$z$ can be interpreted as a regularization of the crack set. In the static framework, the connection between~\eqref{intro1} and fracture mechanics has been drawn in~\cite{MR1075076, MR1978582, MR3928751, MR3247391}, where the authors showed the $\Gamma$-convergence of~$\mathcal{G}_{\varepsilon}$ as~$\varepsilon\to 0$ to the functional
\begin{equation}\label{intro1.1}
\mathcal{G}(u) \coloneq \tfrac{1}{2} \int_{\Om}  \stress(u){\,:\,}\strain(u)\,\di x + \HH^{n-1}(J_u) \qquad\text{for $u\in GSBD^{2}(\Om;\R^{n})$}\,,
\end{equation}
where $\HH^{n-1}$ denotes the $(n-1)$-dimensional Hausdorff measure and~$J_u$ is the discontinuity set of~$u$.

From the computational point of view, the study of the functional~\eqref{intro1} is very convenient in combination with the so-called \emph{alternate minimization} algorithm \cite{MR3376787, MR2341850, MR1745759, Burke2010, Burke2013}: equilibrium configurations of the energy are computed iteratively by minimizing~$\mathcal{G}_{\varepsilon}$ first w.r.t.~$u$ and then w.r.t.~$z$. A fracture irreversibility is imposed by forcing~$z$ to be non-increasing in time. Exploiting the separate convexity of~$\mathcal{G}_{\eps}$, the above scheme guarantees the convergence to a critical point of~$\mathcal{G}_{\eps}$, whose direct computation would be rather time consuming because of the non-convexity of the functional.

 Let us turn our attention to the problem of evolution of the phase field driven by the energy functional~$\mathcal{G}_{\varepsilon}$. In the context of rate-independent processes~\cite{MR2182832, MielkeRoubicek}, the literature related to the general existence of such evolutions is very rich. The papers~\cite{Giacomini2005, MR2983477, MR2640367} showed the existence of~\emph{energetic solutions}, in which the equilibrium configurations are intended to be \emph{global minimizers} of~$\mathcal{G}_{\varepsilon}$. Global minimality is however expected to be unphysical in this context, since it generates time discontinuities in which the solution jumps between two equilibrium states, ignoring the presence of energetic barriers between them. For this reason, the works~\cite{MR3249813, MR3021776, MR3332887, MR3893258, MR3264231, Negri_ACV} promoted a vanishing viscosity approach, based on a viscous regularization of the evolution problem. As the viscosity tends to zero, the solution of the regularized problem converges to a \emph{Balanced Viscosity (BV)} evolution~\cite{MR2887927, MR3531671}, where the equilibrium states are \emph{critical points} of~$\mathcal{G}_{\eps}$. In all the mentioned papers, the proof of existence is very constructive and is based on a scheme that only partly resembles the alternate minimization algorithm used in computational fracture mechanics: besides a time-discretization procedure, the authors made use of a \emph{one-step scheme}, that is, at each time step the energy functional~$\F_{\eps}$ (or a suitable viscous perturbation) is minimized in the pair~$(u, z)$, and a time-continuous solution is obtained in the limit as the time step tends to zero. 

The theoretical investigation of the relationship between alternate minimization schemes for phase field models of fracture and rate-independent processes has instead only recently started with the works~\cite{Almi2017, MR3945577, A-N19, MR3669838} (see also~\cite{Knees2018, MR3899181} for the application of alternating algorithms in different physical frameworks). Here, we describe the algorithm adopted in~\cite{MR3669838}. In dimension $n=2$, given a time horizon~$T>0$, a time dependent Dirichlet boundary condition $t\mapsto g(t)$, and a suitable initial condition~$(u_0, z_0)$, the authors considered, as usual in the study of rate-independent systems, a time discretization procedure: for every $k\in\mathbb{N}$ set $\tau_{k}\coloneq T/k$ the time step increment and $t^{k}_{i}\coloneq i\tau_{k}$, $i=0, \ldots, k$, the time nodes. A time discrete evolution is then constructed through an iterative scheme: Known the state~$(u^{k}_{i-1}, z^{k}_{i-1})$ at time~$t^{k}_{i-1}$, we define $u^{k}_{i,0}\coloneq u^{k}_{i-1}$, $z^{k}_{i,0}\coloneq z^{k}_{i-1}$ and, for $j\geq1$,
\begin{eqnarray}
 &&\displaystyle u^{k}_{i,j}\coloneq \argmin \, \{\mathcal{G}_\eps (u,z^{k}_{i,j-1}):\, u\in H^{1}(\Om;\R^{2}),\, u= g (t^{k}_{i}) \text{ on~$\partial\Om$} \}\label{intro3}\,, \\
 &&\displaystyle z^{k}_{i,j} \coloneq \argmin \, \{ \mathcal{G}_\eps (u^{k}_{i,j},z) :\, z\in H^{1}(\Om),\, z \le z^k_{i,j-1} \} \label{intro4}\,.
\end{eqnarray}
In the limit $j\to\infty$, the algorithm~\eqref{intro3}-\eqref{intro4} identifies a critical point~$(u^{k}_{i}, z^{k}_{i})$ of~$\mathcal{G}_{\eps}$.

The analysis performed in~\cite{MR3669838} ensures that the above discrete solutions converge to a phase field evolution as the time increment~$\tau_{k}$ tends to zero. In particular, the limit solutions are characterized in terms of parametrized BV evolutions. We mention that a discrete version of~\cite{MR3669838} in a space-discrete (finite element) setting has been studied in~\cite{Almi2017} together with the limit of the solutions in a discrete to continuum sense, i.e., as the mesh becomes finer and finer. The results of~\cite{MR3669838} have been further generalized in~\cite{A-N19} to not separately quadratic energy functionals. An example of such energies, that will be considered also in this paper, is inspired by the phase field model introduced in~\cite{A-M-M09, MR3780140}. The underlying idea is that an elastic material behaves differently when under tension or compression. Moreover, fracture propagation is allowed only as a result of tensile or shear stresses, while compression does not lead to inelastic behaviors. Thus, in contrast with~\eqref{intro1}, the factor $z^{2}+\eta_{\varepsilon}$ shall not affect the whole stress~$\stress(u)$. Instead, a splitting of the strain~$\strain(u)$ into its volumetric~$\strain_{v} (u)\coloneq \tfrac{1}{2}(\tr \strain(u))\mathbf{I}$ and deviatoric $\strain_{d}(u)\coloneq \strain(u) - \strain_{v}(u)$ parts is considered, where~$\tr$ denotes the trace of a matrix and~$\mathbf{I}$ is the identity matrix. Introducing also tensile and compressive strains $\strain_{v}^{\pm}(u)\coloneq \frac{1}{2}(\tr \strain(u))_{\pm} \mathbf{I}$, the elastic energy density writes as
\begin{equation}\label{intro6}
W_\eps(z, \strain(u)) \coloneq h_{\varepsilon}(z)(\mu |\strain_{d}(u)|^{2} + \kappa |\strain_{v}^{+}(u)|^{2}) + \kappa |\strain_{v}^{-}(u)|^{2}
\end{equation}
and the phase field energy becomes
\begin{equation}\label{intro7}
\F_{\varepsilon}(z, u) \coloneq \int_{\Om} W_\eps (z, \strain(u)) \,\di x + \int_{\Om} ( |\nabla z |^{2} + f_\eps(z)) \, \di x \qquad\text{for $u\in H^{1}(\Om;\R^{2})$ and $z\in H^{1}(\Om)$}\,.
\end{equation}
In the above formulas, $\mu$ and $\kappa$ are two positive parameters related to the Lam\'e coefficients of the elastic material, and $h_\eps, f_\eps \colon \R\to[0,+\infty)$ are two \emph{degradation functions}. We refer to Section~\ref{setting} for the whole set of hypotheses and to~\cite{A-M-M09} for more details about the model. Here we only mention that in~\cite{MR3780140} it has been proven that, in dimension $n=2$,~$\F_\eps$ $\Gamma$-converges to~$\mathcal{G}$ in~\eqref{intro1.1} for $u \in SBD^{2}(\Om; \R^{2})$ satisfying the non-interpenetration constraint $[u]\cdot \nu_{u} \geq 0$, where $\nu_{u}$ is the approximate unit normal to~$J_u$ and~$[u]$ stands for the amplitude of the jump of~$u$ across~$J_u$. 

Despite the sound mathematical results obtained in~\cite{Almi2017, A-N19, MR3669838}, one of the drawback of the alternating scheme \eqref{intro3}-\eqref{intro4} lies in the irreversibility condition $z\leq z^{k}_{i,j-1}$, which forces the phase field variable to be non-increasing along the whole algorithm. This requirement, indeed, could lead to an accumulation of numerical error, making the fracture simulation very inaccurate. In order to bypass such a problem, the weaker irreversibility constraint $z\leq z^{k}_{i-1}$ is usually numerically imposed (see, for instance,~\cite{MR3376787, MR2341850, MR1745759, Burke2010, Burke2013}). From a theoretical viewpoint, very little is known about convergence of the scheme~\eqref{intro3}-\eqref{intro4} with this new irreversibility condition, mainly because the lack of monotonicity of~$z$ along the algorithm prevents from deducing any time regularity of the discrete evolutions, such as BV in time, and makes the analysis of~\cite{A-N19, MR3669838} out of reach. Up to our knowledge, the only existing result is~\cite{MR3945577}, where the minimization~\eqref{intro4} is replaced by
\begin{equation}\label{intro5}
z^{k}_{i,j}\coloneq\min\,\{\tilde{z}^{k}_{i,j}, z^{k}_{i-1}\}
 \quad \text{where} \quad  \tilde z^{k}_{i,j} \coloneq \argmin\,\{\mathcal{G}_\eps (u^{k}_{i,j},z) + \tfrac{1}{2 \tau_k} \| z - z^k_{i-1} \|^2_{L^2}:\, z \in H^{1}(\Om) \} \,.
\end{equation}
In particular, we notice that the functional~$\mathcal{G}_{\eps}$ is perturbed with an $L^{2}$-penalization, which makes the time discrete evolution regular in time. Furthermore, the minimum problem in~\eqref{intro5} is unconstrained, while the irreversibility is a posteriori imposed by truncating the minimizer~$\tilde{z}^{k}_{i,j}$ with~$z^{k}_{i-1}$. In the limit as $\tau_{k}\to 0$ we have been able to show the convergence to an $L^{2}$-gradient flow of~$\mathcal{G}_{\eps}$, while the presence of the pointwise minimization prevented us from studying the vanishing viscosity limit, and hence the convergence to a BV evolution.

The scope of this note is to give a first result of convergence of an alternate minimization scheme to a quasi-static evolution, in the presence of the (weaker) irreversibility constraint $z\leq z^{k}_{i-1}$. Precisely, with the notation introduced in~\eqref{intro6}-\eqref{intro7}, for every $\delta>0$ and $k\in\mathbb{N}$ we consider the iterative algorithm
\begin{eqnarray}
 &&\displaystyle u^{k}_{i,j} \coloneq \argmin\,\{\mathcal{F}_\eps (u,z^{k}_{i,j-1}):\, u\in H^{1}(\Om;\R^{2}),\, u= g (t^{k}_{i}) \text{ on~$\partial\Om$} \}\label{intro8}\,, \\
 &&\displaystyle z^{k}_{i,j} \coloneq \argmin\, \{ \mathcal{F}_\eps (u^{k}_{i,j},z) + \tfrac{\delta}{2\tau_{k}} \| z - z^{k}_{i-1} \|_{2}^{2} :\, z\in H^{1}(\Om),\, z \le z^k_{i - 1} \} \label{intro9}\,.
\end{eqnarray}
While~\eqref{intro8} is exactly as~\eqref{intro3}, we notice that~\eqref{intro9} is intermediate between~\eqref{intro4} and~\eqref{intro5}. Indeed, we have now explicitly imposed in the minimization the monotonicity constraint~$z \leq z^{k}_{i-1}$, which only ensures an irreversibility w.r.t.~time but not along the whole scheme. As in~\eqref{intro5}, we have regularized in time the evolution of the phase field~$z$ by adding to~$\F_\eps$ an $L^{2}$-penalization driven by a small viscous parameter~$\delta$.

Following the lines of~\cite{MR3454016, MR3021776}, in Sections~\ref{s.algorithm} and~\ref{s.evolution} we study the limit of~\eqref{intro8}-\eqref{intro9} as~$k\to\infty$ and $\delta\to 0$, in the given order. Hence, in Theorem~\ref{t.viscous} we show  for $\delta>0$ the convergence of the iterative scheme to a viscous evolution~$(u_{\delta}, z_{\delta})$ satisfying the following \emph{displacement equilibrium} and \emph{energy balance}:
\begin{eqnarray}
&& \displaystyle u_{\delta} (t) = \argmin \, \{ \F_{\eps}(u, z_{\delta}(t)) : \, u\in H^{1}(\Om;\R^{2}), \, u= g(t) \text{ on $\partial \Om$}\} \qquad \text{for every $t\in[0,T]$}\,, \label{intro10}\\[2mm]
&& \displaystyle \dot{\F}_{\eps}(u_{\delta}(t), z_{\delta}(t)) =  - \tfrac{1}{2\delta} |\partial_{z}^{-} \F_{\eps}|^{2} (u_{\delta} (t), z_{\delta} ( t) ) - \tfrac{\delta}{2} \| \dot{z}_{\delta} (t) \|_{2}^{2} + \partial_{t} \F_{\eps} ( u_{\delta} (t), z_{\delta}(t)) \qquad\text{for a.e.~$t\in [0,T]$}\,,
\end{eqnarray}
where $|\partial_{z}^{-}\F_{\eps}|$ denotes the \emph{unilateral} slope of~$\F_{\eps}$ and the dot indicates the derivative w.r.t.~time. We refer to Definitions~\ref{definitionSlope} and~\ref{d.Evolution} for the full details.

Finally, in Theorem~\ref{t.vanevolution} we consider the vanishing viscosity limit $\delta\to 0$ and prove that the pair~$(u_{\delta}, z_{\delta})$ converges, in a time reparametrized setting, to a BV evolution, now represented by a triple $(t, u, z)$, where~$t$ is a suitable Lipschitz parametrization of the time interval~$[0,T]$. Besides the  equilibrium condition~\eqref{intro10}, the triple~$(t, u, z)$ also satisfies the energy balance
\begin{displaymath}
\F_{\eps}'(u(s), z(s)) = -|\partial_{z}^{-}\F_{\eps}| (u(s), z(s)) \| z'(s) \|_{2} + \partial_{t} \F_{\eps} (u(s), z(s)) t' (s) \qquad\text{for a.e.~$s$}\,,
\end{displaymath}
where the index $'$ denotes the derivative w.r.t.~the new time variable~$s$. As in~\cite{MR3454016, MR3021776}, the time reparametrization is based on a uniform estimate of the length of the curve $t\mapsto z_{\delta}(t)$. Namely, in Lemma~\ref{l.length} we prove that the arc length of the algorithm~\eqref{intro8}-\eqref{intro9}, measured in terms of the distance between two consecutive states of the iterative minimization, is bounded uniformly w.r.t.~$k$ and~$\delta$. Eventually, this allows us to perform an arc length reparametrization~$s\mapsto t_{\delta}(s)$ of time which makes~$(u_{\delta}, z_{\delta})$ 1-Lipschitz continuous, and thus compact, in the new time variable. We notice that the uniform bound of the arc length is  a consequence of the combination of a general regularity and continuity result~\cite{HerzogMeyerWachsmuth_JMAA11} (see also Lemma~\ref{l.regLp}) for PDEs with non-constant coefficients and of Sobolev embeddings, valid only in dimension two.


\section{Notation and setting of the problem}\label{setting}

In order to explain the phase field model considered in this work, we have to introduce some notation. Let~$\mathbb{M}^{2}$ denote the space of squared matrices of order~$2$ and~$\mathbb{M}^{2}_{s}$ be the subspace of symmetric matrices. For every $\FF\in\mathbb{M}^{2}$, we consider its splitting in volumetric and deviatoric part 
\begin{equation*} 
\FF_{v}\coloneq \tfrac{1}{2}(\tr \FF) \mathbf{I} \qquad \text{and} \qquad \FF_{d}\coloneq \FF-\FF_{v}\,.
\end{equation*}
Note that $\FF_{v}{\,:\,}\FF_{d} = 0$, where the symbol~$:$ denotes the scalar product between matrices. As a consequence, we have that
\begin{displaymath}
|\FF|^{2}=|\FF_{v}|^{2}+|\FF_{d}|^{2}\qquad\text{for every $\FF\in\mathbb{M}^{2}$}\,,
\end{displaymath}
where $| \cdot |$ denotes Frobenius norm. Moreover, we set 
\begin{displaymath}
\FF_{v}^{\pm}\coloneq \tfrac{1}{2}(\tr \FF)_{\pm} \mathbf{I} \,,
\end{displaymath}
where~$(\cdot)_{+}$ and~$(\cdot)_{-}$ stand for positive and negative part, respectively. It is clear that $| \FF_v |^2 = | \FF^+_v |^2 + | \FF^-_v |^2$. 

Given $\EE\in \mathbb{M}^{2}_{s}$, we can rewrite the usual linear elastic energy density as
\begin{displaymath}
\begin{split}
\C\EE : \EE & = \tfrac{\lambda}{2} |\tr\EE|^{2}+\mu|\EE|^{2} = \lambda(|\EE_{v}^{+}|^{2}+|\EE_{v}^{-}|^{2})+\mu(|\EE_{v}^{+}|^{2}+|\EE_{v}^{-}|^{2}+|\EE_{d}|^{2}) \nonumber\\
&=\mu|\EE_{d}|^{2}+\kappa|\EE_{v}^{+}|^{2} + \kappa|\EE_{v}^{-}|^{2}
\end{split}
\end{displaymath} 
where $\lambda, \mu$ are the Lam\'e coefficients and $\kappa\coloneq\lambda+\mu$ We assume $\mu, \kappa >0$.

Following the lines of~\cite{A-M-M09, MR3780140}, we consider a phase field model that does not allow for fracture under compression, that is, when $( \tr \EE)_{-} \neq 0$. To model such a behavior, the phase field variable~$z\in[0,1]$ is assumed to affect only the energetic contribution of the tensile strain~$\EE_{v}^{+}$ and of the deviatoric strain~$\EE_{d}$. Hence, the \emph{elastic energy} density reads 
\begin{displaymath}
W(z,\EE)\coloneq h(z)\big( \mu |\EE_{d}|^{2} + \kappa |\EE_{v}^{+}|^{2} \big) + \kappa |\EE_{v}^{-}|^{2} \qquad \text{for $z\in\R$ and  $\EE\in\mathbb{M}^{2}_{s}$} \,,
\end{displaymath}
where $h\colon \R \to[0,+\infty)$ is the {\it degradation function}. We assume that $h \in C^{1,1}_{loc}(\R)$ is convex and such that
\begin{displaymath}
 \text{ $h(z)\geq h(0) >0$ for every $z\in\R$.}
\end{displaymath}
Notice that, under these assumptions, $h$ is non-decreasing in $[0,+\infty)$. 

For $z$ fixed, the function $W ( z, \cdot) $ is differentiable w.r.t.~$\EE$ and 
\begin{displaymath}
\partial_{\mathbf{E}} W(z,\mathbf{E} ) = 2 h(z) \big(  \mu \mathbf{E}_{d} + \kappa \mathbf{E}_{v}^{+} \big) - 2\kappa \mathbf{E}_{v}^{-} \,.
\end{displaymath} 
We collect here some useful properties of the energy density~$W$. We refer to~\cite[Lemma~3.1]{A-N19} for more details.

\begin{lemma}\label{l.HMWw}
The function $W \colon \R \times \mathbb{M}^{2}_{s} \to [0,+\infty)$ is of class~$C^{1,1}_{\mathrm {loc}}$. Moreover, there exist two positive constants~$c$,~$C$ such that for every $z\in [0,1]$ and every $\mathbf{E}_{1}, \mathbf{E}_{2} \in \mathbb{M}^{2}_{s}$ the following holds:
\begin{itemize}
\item[$(a)$] $\big(\partial_{\mathbf{E}} W (z, \mathbf{E}_{1}) - \partial_{\strain} W (z, \mathbf{E}_{2}) \big) {\,:\,} ( \mathbf{E}_{1} - \mathbf{E}_{2} ) \geq c | \mathbf{E}_{1} - \mathbf{E}_{2} |^{2}$; \label{e.W1}
\item[$(b)$] $\big| \partial_{\mathbf{E}} W (z, \mathbf{E}_{1} ) - \partial_{\mathbf{E}} W (z, \mathbf{E}_{2} ) \big| \leq C |\mathbf{E}_{1} - \mathbf{E}_{2} |$; \label{e.W2} 
\item[$(c)$] $| \partial_{\mathbf{E}} W ( z , \mathbf{E} ) | \leq C | \mathbf{E} | $. \label{e.W3}
\end{itemize}
\end{lemma}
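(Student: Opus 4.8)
The plan is to establish the $C^{1,1}_{\loc}$ regularity and the three estimates one at a time, in each case reducing the volumetric terms of $W$ to a scalar computation. For the regularity I would decompose $W(z,\EE) = h(z)\mu|\EE_{d}|^{2} + \tfrac12 h(z)\kappa\,(\tr\EE)_{+}^{2} + \tfrac12\kappa\,(\tr\EE)_{-}^{2}$, using $|\EE_{v}^{\pm}|^{2} = \tfrac12(\tr\EE)_{\pm}^{2}$. The maps $\EE\mapsto\EE_{d}$ and $\EE\mapsto\tr\EE$ are linear, and the scalar functions $s\mapsto s^{2}$, $s\mapsto(s)_{+}^{2}$, $s\mapsto(s)_{-}^{2}$ belong to $C^{1,1}(\R)$, their derivatives $2s$, $2(s)_{+}$, $-2(s)_{-}$ being globally Lipschitz. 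Since $h\in C^{1,1}_{\loc}(\R)$ by hypothesis and $C^{1,1}_{\loc}$ regularity is stable under sums, products and composition with affine maps, it follows that $W\in C^{1,1}_{\loc}(\R\times\mathbb{M}^{2}_{s})$, and differentiating in $\EE$ reproduces the formula for $\partial_{\EE}W$ recorded above.

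For estimate $(a)$, set $\FF\coloneq\EE_{1}-\EE_{2}$ and use the orthogonal decomposition $\FF=\FF_{v}+\FF_{d}$ with $\FF_{v}:\FF_{d}=0$; write $a\coloneq\tr\EE_{1}$ and $b\coloneq\tr\EE_{2}$, so that $\tr\FF=a-b$ and $|\FF_{v}|^{2}=\tfrac12(a-b)^{2}$. Since $(\EE_{1})_{d}-(\EE_{2})_{d}=\FF_{d}$, $(\EE_{i})_{v}^{\pm}=\tfrac12(\tr\EE_{i})_{\pm}\mathbf{I}$ and $\mathbf{I}:\FF=a-b$, a direct computation from the formula for $\partial_{\EE}W$ gives
\begin{displaymath}
\bigl(\partial_{\EE}W(z,\EE_{1})-\partial_{\EE}W(z,\EE_{2})\bigr):\FF \;=\; 2h(z)\mu\,|\FF_{d}|^{2} \;+\; \kappa\,(a-b)\bigl(\psi(a)-\psi(b)\bigr)\,,
\end{displaymath}
where $\psi(s)\coloneq h(z)(s)_{+}-(s)_{-}$. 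The scalar function $\psi$ is Lipschitz with $\psi'=h(z)$ a.e.\ on $\{s>0\}$ and $\psi'=1$ a.e.\ on $\{s<0\}$, hence $s\mapsto\psi(s)-\min\{h(0),1\}\,s$ is non-decreasing and so $(a-b)\bigl(\psi(a)-\psi(b)\bigr)\ge\min\{h(0),1\}(a-b)^{2}$. Using $h(z)\ge h(0)>0$ and $|\FF|^{2}=|\FF_{d}|^{2}+|\FF_{v}|^{2}$ then yields $(a)$ with $c\coloneq 2\min\{h(0)\mu,\ \kappa\min\{h(0),1\}\}>0$.

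For $(b)$ and $(c)$, I first note that $h$ is convex and attains its minimum value $h(0)$ at $0$, hence is non-decreasing on $[0,+\infty)$, so $0<h(0)\le h(z)\le h(1)$ for all $z\in[0,1]$. The maps $\EE\mapsto\EE_{d}$ and $\EE\mapsto(\tr\EE)_{\pm}\mathbf{I}$ are Lipschitz — the first with constant $1$, the other two because $s\mapsto(s)_{\pm}$ is $1$-Lipschitz and $|\tr\FF|\le\sqrt2\,|\FF|$ — so bounding the three terms of $\partial_{\EE}W(z,\EE_{1})-\partial_{\EE}W(z,\EE_{2})$ gives $(b)$ with a constant depending only on $\mu$, $\kappa$, $h(1)$, hence uniform in $z\in[0,1]$; choosing $\EE_{2}=0$, for which $\partial_{\EE}W(z,0)=0$, gives $(c)$.

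The only genuinely delicate point is $(a)$: because the positive and negative parts make $W$ only $C^{1,1}$ in $\EE$, one cannot simply invoke convexity of a smooth stored-energy density, and instead must isolate the scalar function $\psi$ and exploit that its one-sided derivatives remain bounded below uniformly in $z\in[0,1]$, while the orthogonality $\FF_{v}:\FF_{d}=0$ keeps the deviatoric and volumetric contributions decoupled.
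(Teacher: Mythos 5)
Your proof is correct. Since the paper itself does not reproduce a proof of this lemma but only refers the reader to~\cite[Lemma~3.1]{A-N19}, I cannot compare against the paper's argument verbatim; nonetheless, your argument is self-contained and valid, and it follows what is essentially the natural route. A few remarks confirming the soundness of the key steps: the identity $|\EE_v^{\pm}|^2 = \tfrac12(\tr\EE)_{\pm}^2$ holds because $|\mathbf{I}|^2 = 2$ in dimension two, so your scalar reduction is exact; the crucial computation
\begin{displaymath}
\bigl(\partial_{\EE}W(z,\EE_1)-\partial_{\EE}W(z,\EE_2)\bigr):\FF = 2h(z)\mu|\FF_d|^2 + \kappa(a-b)(\psi(a)-\psi(b))
\end{displaymath}
is correct because $\FF_d:\FF = |\FF_d|^2$ by orthogonality and $\mathbf{I}:\FF = a-b$; the strong monotonicity of $\psi(s)=h(z)(s)_+-(s)_-$ with modulus $\min\{h(0),1\}$ is exactly where the hypothesis $h\ge h(0)>0$ enters; and since $(a-b)^2 = 2|\FF_v|^2$, your constant $c = 2\min\{h(0)\mu,\,\kappa\min\{h(0),1\}\}$ is the right one. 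For~(b) the boundedness of $h$ on $[0,1]$ by $h(1)$ (convexity plus minimum at $0$) gives uniformity in $z$, and $\partial_{\EE}W(z,0)=0$ indeed reduces~(c) to~(b). The one thing worth making explicit, which you state informally, is that the joint $C^{1,1}_{\loc}$ regularity in $(z,\EE)$ requires that $g(\EE)$ and $\nabla g(\EE)$ are only locally bounded, so the product rule for Lipschitz constants works on bounded sets but not globally — which is precisely why the conclusion is $C^{1,1}_{\loc}$ and not $C^{1,1}$.
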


Let~$\Om$ be an open bounded subset of~$\R^{2}$ with Lipschitz boundary~$\partial\Om$. For later use, we also fix~$\partial_{D}\Om\subseteq \partial\Om$ regular in the sense of Gr\"oger~\cite{MR990595}. For every $u\in H^{1}(\Om;\R^{2})$ and $z\in H^{1}(\Om) \cap L^{\infty}(\Om)$ we define the \emph{elastic energy}
\begin{equation}\label{elastic}
 \E (u,z) \coloneq 
\tfrac{1}{2} \int_{\Om} W(z, \strain(u)) \,\di x\, ,
\end{equation}
where~$\strain(u) = \tfrac12 ( \nabla{u}+(\nabla{u})^{T} )$ denotes the strain.

We introduce the \emph{dissipation potential} associated to the phase field variable $z\in H^{1}(\Om) \cap L^{\infty}(\Om)$ given by
\begin{equation}\label{dissipation}
\D(z):=\tfrac{1}{2}\int_{\Om} |\nabla{z}|^{2} + f(z) \,\di x\,.
\end{equation}
Here, we assume the degradation function $f\colon \R\to [0,+\infty)$ to be of class~$C^{1,1}_{loc}$, strongly convex, and such that $0\leq f(1) \leq f(z)$. The prototypical example is $f(z) = (z-1)^{2}$. However, many different degradation functions have been extensively considered in the fracture mechanics literature (see, e.g.,~\cite{MR3304294, PhysRevLett.87.045501, Wu_JMPS17}).

The \emph{total energy} $\F \colon H^{1}(\Om;\R^{2}) \times  (H^{1}(\Om) \cap L^{\infty}(\Om)) \to [0,+\infty)$ of the system is given by the sum of elastic energy~\eqref{elastic} and dissipation potential~\eqref{dissipation}
\begin{equation}\label{totalenergy}
\F(u,z)\coloneq \E(u,z) + \D (z)\,.
\end{equation}
Notice that, in comparison with~\eqref{intro7}, we have fixed $\varepsilon = \tfrac{1}{2}$.


An important role in the definition of evolution we consider in this work is played by the following notion of \emph{unilateral $L^{2}$-slope} (see also~\cite[Definition~1.1]{MR3945577}).

\begin{definition}\label{definitionSlope}
	For $u\in H^{1}(\Om;\R^{2})$ and $z \in H^{1}(\Om) \cap L^{\infty}(\Om)$ we define the  \emph{unilateral $L^2$-slope} of~$\F$ with respect to $z$ at the point $(u,z)$ as
	\begin{equation}\label{Slope2}
		\abs{ \partial_z^{-}\F } (u,z) \coloneq \limsup_{\substack{ v \to z \\ v \in H^{1}(\Om) \cap L^{\infty}(\Om) , \, v \leq z }} \frac{[\F(u,z) - \F(u,v)]_+}{\norm{z-v}_{L^2}}\,,
	\end{equation}
where the convergence is intended in the $L^{2}$-topology.
\end{definition}

\begin{remark}
The minus sign appearing in the notation $|\partial^-_z \F |$ reminds that only negative variations are allowed and it should not be confused with a similar notation for the relaxed slope (see, e.g.,~\cite[Section~2.3]{MR2401600}).
\end{remark}

For $u\in H^{1}(\Om;\R^{2})$ and $z, \varphi \in H^{1}(\Om) \cap L^{\infty}(\Om)$ there exists finite the partial derivative of~$\F$ with respect to~$z$, i.e.,
\begin{equation}\label{derF}
\begin{split}
\partial_{z}\F(u,z)[\varphi]& = \int_{\Om} \partial_{z} W(z, \strain (u)) \varphi \,\di x+\int_{\Om}\nabla{z}{\,\cdot\,}\nabla{\varphi} + f'(z) \varphi\,\di x
\\
&
= \int_{\Om} h'(z)\varphi (\mu |\strain_{d}(u)|^{2} + \kappa |\strain_{v}^{+}(u)|^{2})\,\di x +\int_{\Om}\nabla{z}{\,\cdot\,}\nabla{\varphi} + f'(z) \varphi\,\di x\,.
\end{split}
\end{equation}
The natural relationship between partial derivatives~\eqref{derF} and slope~\eqref{Slope2} is stated in the next lemma, whose proof can be found, for instance, in~\cite[Lemma~2.3]{MR2401600} or~\cite[Lemma~2.2]{Negri_ACV}.

\begin{lemma}\label{SlopeLemma}
	For $u\in H^{1}(\Om;\R^{2})$ and $z \in H^{1}(\Om) \cap L^{\infty}(\Om)$ there holds
	\begin{displaymath}
		\abs{ \partial_z^{-}\F} (u,z) = \sup \,\{ -\partial_{z} \F(u,z)[\varphi] : \varphi \in H^{1}(\Om) \cap L^{\infty}(\Om), \, \varphi \leq 0 , \, \norm{\varphi}_{L^2} \leq 1\} \,.
	\end{displaymath}
\end{lemma}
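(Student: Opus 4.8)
The plan is to establish the two inequalities separately; the only genuine ingredient is the \emph{convexity} (and G\^ateaux differentiability) of $z\mapsto\F(u,z)$ on $H^{1}(\Om)\cap L^{\infty}(\Om)$. Convexity holds because, for $u$ fixed and a.e.\ $x\in\Om$, the coefficient $\mu\abs{\strain_{d}(u)(x)}^{2}+\kappa\abs{\strain_{v}^{+}(u)(x)}^{2}$ is nonnegative, so $z\mapsto h(z)\big(\mu\abs{\strain_{d}(u)}^{2}+\kappa\abs{\strain_{v}^{+}(u)}^{2}\big)$ is pointwise convex as a nonnegative multiple of the convex function $h$; adding the $z$-independent term $\kappa\abs{\strain_{v}^{-}(u)}^{2}$ and integrating shows that $z\mapsto\E(u,z)$ is convex, and $\D$ is convex as the sum of $\tfrac12\int_{\Om}\abs{\nabla z}^{2}\,\di x$ and $\int_{\Om}f(z)\,\di x$ with $f$ convex. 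Since $\F(u,\cdot)=\E(u,\cdot)+\D$ is also G\^ateaux differentiable with derivative given by~\eqref{derF}, for each direction $\varphi\in H^{1}(\Om)\cap L^{\infty}(\Om)$ the incremental quotient $t\mapsto t^{-1}\big(\F(u,z+t\varphi)-\F(u,z)\big)$ is nondecreasing in $t>0$ and converges to $\partial_{z}\F(u,z)[\varphi]$ as $t\to0^{+}$.

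For the bound ``$\geq$'' I would fix an admissible $\varphi$, i.e.\ $\varphi\in H^{1}(\Om)\cap L^{\infty}(\Om)$ with $\varphi\leq0$, $\norm{\varphi}_{L^{2}}\leq1$, and $\varphi\neq0$ (the case $\varphi=0$ is trivial since the slope is nonnegative), and estimate the $\limsup$ in~\eqref{Slope2} from below along the affine family $v_{t}\coloneq z+t\varphi$ as $t\downarrow0$: one has $v_{t}\in H^{1}(\Om)\cap L^{\infty}(\Om)$, $v_{t}\leq z$, $v_{t}\neq z$, $v_{t}\to z$ in $L^{2}(\Om)$, and $\norm{z-v_{t}}_{L^{2}}=t\norm{\varphi}_{L^{2}}$, so that
\begin{displaymath}
\abs{\partial_{z}^{-}\F}(u,z)\;\geq\;\limsup_{t\to0^{+}}\frac{\big[\F(u,z)-\F(u,v_{t})\big]_{+}}{t\norm{\varphi}_{L^{2}}}\;=\;\frac{\big[-\partial_{z}\F(u,z)[\varphi]\big]_{+}}{\norm{\varphi}_{L^{2}}}\;\geq\;-\partial_{z}\F(u,z)[\varphi]\,,
\end{displaymath}
where the last step uses $\norm{\varphi}_{L^{2}}\leq1$. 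Taking the supremum over all admissible $\varphi$ yields ``$\geq$''.

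For the reverse bound ``$\leq$'' I would take a sequence $v_{n}\in H^{1}(\Om)\cap L^{\infty}(\Om)$ with $v_{n}\leq z$ and $v_{n}\to z$ in $L^{2}(\Om)$ realizing the $\limsup$ in~\eqref{Slope2}; dropping the indices for which $v_{n}=z$ or $\F(u,v_{n})\geq\F(u,z)$ (their quotients vanish), I may assume $v_{n}\neq z$ and set $\varphi_{n}\coloneq(v_{n}-z)/\norm{v_{n}-z}_{L^{2}}$, which lies in $H^{1}(\Om)\cap L^{\infty}(\Om)$ and satisfies $\varphi_{n}\leq0$, $\norm{\varphi_{n}}_{L^{2}}=1$. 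The convexity inequality $\F(u,v_{n})\geq\F(u,z)+\partial_{z}\F(u,z)[v_{n}-z]$ then gives
\begin{displaymath}
\frac{\big[\F(u,z)-\F(u,v_{n})\big]_{+}}{\norm{z-v_{n}}_{L^{2}}}\;\leq\;\big[-\partial_{z}\F(u,z)[\varphi_{n}]\big]_{+}\;\leq\;\sup\,\big\{-\partial_{z}\F(u,z)[\varphi]:\varphi\in H^{1}(\Om)\cap L^{\infty}(\Om),\,\varphi\leq0,\,\norm{\varphi}_{L^{2}}\leq1\big\}\,,
\end{displaymath}
and passing to the $\limsup$ in $n$ closes the argument.

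The proof is essentially soft, and the only point that genuinely needs care is the convexity (together with the G\^ateaux differentiability) of $z\mapsto\F(u,z)$: it is precisely what licenses both the comparison inequality used for ``$\leq$'' and the identification of the one-sided derivative used for ``$\geq$''. Without it the slope~\eqref{Slope2} would in general only dominate the \emph{relaxed} slope rather than coincide with the right-hand side --- the phenomenon the Remark above cautions against. A secondary, routine check is that the affine perturbations $z+t\varphi$ and the normalized differences $\varphi_{n}$ remain in $H^{1}(\Om)\cap L^{\infty}(\Om)$, which is immediate since $z,\varphi,v_{n}\in H^{1}(\Om)\cap L^{\infty}(\Om)$.
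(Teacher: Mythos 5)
Your proof is correct, and it follows the standard argument used in the references the paper cites for this lemma (Almi–Negri, \emph{ibid.}, and the paper by De Giorgi–Marino–Tosques-type slope theory); the paper itself does not include a proof. Both the ``$\geq$'' direction (testing the $\limsup$ along the one-parameter curve $v_t=z+t\varphi$, $t\downarrow 0$, and invoking G\^ateaux differentiability) and the ``$\leq$'' direction (subgradient inequality from convexity of $\F(u,\cdot)$, applied along a realizing sequence normalized in $L^2$) are carried out correctly, and your identification of convexity plus G\^ateaux differentiability as the only nontrivial inputs is accurate — these are exactly what ensure the slope agrees with the right-hand side rather than merely dominating it. Two small points that are implicit but fine: (i) $L^2$ is metrizable, so the $\limsup$ in Definition~\ref{definitionSlope} is indeed attained along a sequence, justifying the extraction of $(v_n)$; (ii) the well-definedness of~\eqref{derF} for $z,\varphi\in H^{1}\cap L^{\infty}$ uses the $L^\infty$ bound on $z$ to control $h'(z)$ and $f'(z)$, which you use without comment but which is guaranteed by the hypotheses on $h$ and $f$.
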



Finally, let us define, for $u \in H^{1}(\Om;\R^{2})$ and $z\in H^{1}(\Om) \cap L^{\infty}(\Om)$, the functional 
\begin{displaymath}
	\mathcal{P} ( u , z, w ) \coloneq \int_\Om \partial_{\strain} W(z, \strain (u)) [\strain(w)] \,\di x = \partial_{u} \F( u, z) [w] \,.
\end{displaymath}


%


We are now in a position to give the precise definition of viscous and vanishing viscosity evolutions we consider in this paper.


\begin{definition}\label{d.Evolution}
Let $\delta>0$, $T>0$, and $g \in H^1 ([0,T]; W^{1,p} (\Om; \R^2))$ for some $p>2$.
Let $u_0 \in H^{1}(\Om;\R^{2})$ with $u_0 = g(0)$ on~$\partial_D \Omega$ and let $z_0 \in H^1(\Omega ; [0,1])$  be such that
\begin{eqnarray}
&&\displaystyle   u_0 \in \argmin\, \{ \E (u ,z_{0}) : \text{$u\in H^{1}(\Om;\R^{2})$ with $u=g(0)$ on~$\partial_D \Om$} \}\,,\label{equilibriumu0} \\[1mm]
&& \displaystyle z_0 \in \argmin \{\F(u_0, z) : \text{$z\in H^{1}(\Om)$ and $z\leq z_0$}\} \,.\label{equilibriumz0}
\end{eqnarray}

We say that a pair $(u_\delta , z_\delta ) \colon [0,T] \to H^{1}(\Om;\R^{2}) \times H^{1}(\Om)$ is a \emph{viscous evolution} for the energy~$\F$ with initial condition~$(u_0, z_0)$ and boundary condition~$g$ if the following properties are satisfied:

\smallskip
\begin{itemize}\setlength\itemsep{3pt}
\item[$(a)$] \emph{Time regularity}: $u_\delta \in C ([0,T]; H^{1}(\Om;\R^{2}))$ and $z_\delta \in H^{1}( [0,T] ;H^{1} (\Om))$ with $u_{\delta} (0) = u_{0}$ and
\phantom{-----------------------} $z_{\delta} (0) = z_0$;

\item[$(b)$] \emph{Irreversibility}: $t\mapsto z_{\delta}(t)$ is non-increasing (i.e., $z_{\delta}(\tau)\leq z_{\delta}(t)$ a.e.~in $\Om$ for every $0\leq t\leq \tau \leq T$) \phantom{-------------------}  and $0 \le z_{\delta}(t) \le 1$ for every $t \in [0,T]$;
\item[$(c)$] \emph{Displacement equilibrium}: for every $t\in[0,T]$ we have $u_{\delta} (t) = g(t)$ on~$\partial_D \Omega$ and 
\begin{displaymath}
u_{\delta} (t)  \in \argmin\, \{ \E (u ,z_{\delta}(t)) : \text{$u\in H^{1}(\Om;\R^{2})$ with $u=g(t)$ on~$\partial_D \Om$} \}\,;
\end{displaymath}
\item[$(d)$] \emph{Energy balance}: the map $t\mapsto \F(u_{\delta}(t),z_{\delta}(t))$ is absolutely continuous and for a.e.~$t\in[0,T]$ it holds
\begin{equation}\label{e.enbaldelta}
\dot \F(u_{\delta}(t),z_{\delta}(t))=-\tfrac{\delta}{2} \|\dot{z}_{\delta}(t)\|_{L^2}^{2} - \tfrac{1}{2 \delta} |\partial_z^{-}\F|^{2}(u_{\delta}(t) , z_{\delta}(t)) + \mathcal{P} ( u_{\delta}(t) , z_{\delta}(t), \dot g (t))\,.
\end{equation}
\end{itemize}
\end{definition}


Our first goal is to prove the convergence to a unilateral $L^{2}$-gradient flow of the \emph{time discrete} solutions obtained by an alternating minimization scheme (see~\eqref{minu}-\eqref{minz} for the algorithm and Theorem~\ref{t.viscous} for the convergence result). Our second aim is to characterize the limit $\delta \to 0$, of the above evolution. The limit solutions are described in the following definition.

\begin{definition}\label{d.vanevolution}
Let $T>0$ and $g \in H^1 ([0,T]; W^{1,p} (\Om; \R^2))$ for some $p>2$.
Let $u_0 \in H^{1}(\Om;\R^{2})$ with $u_0 = g(0)$ on~$\partial_D \Omega$ and $z_0 \in H^1(\Omega ; [0,1])$  be such that~\eqref{equilibriumu0} and~\eqref{equilibriumz0} are satisfied.

For $S\in (0,+\infty)$ we say that a triple $(t, u , z ) \colon [0,S] \to [0,T]\times H^{1}(\Om;\R^{2}) \times H^{1}(\Om)$ is a \emph{vanishing viscosity evolution} for the energy~$\F$ with initial condition~$(u_0, z_0)$ and boundary condition~$g$ if the following properties are satisfied:

\smallskip
\begin{itemize}\setlength\itemsep{3pt}
\item[$(a)$] \emph{Time regularity}: $t\in W^{1,\infty}(0,S)$, $u \in C ([0,S]; H^{1}(\Om;\R^{2}))$, and $z_\delta \in W^{1,\infty}( [0,S] ;H^{1} (\Om))$ with $t(0) = 0$, \phantom{-----------------------}$t(S) = T$, $u (0) = u_{0}$, and
$z (0) = z_0$;

\item[$(b)$] \emph{Irreversibility}: $s \mapsto z (s)$ is non-increasing and $0 \le z (s) \le 1$ for every $s \in [0,S]$;

\item[$(c)$] \emph{Normalization}: $t'(s) + \| z'(s) \|_{H^{1}} \leq 1$ for a.e.~$s\in[0,S]$;
\item[$(d)$] \emph{Displacement equilibrium}: for every $s \in [0,S]$ we have $u (s) = g(t(s))$ on~$\partial_D \Omega$ and 
\begin{displaymath}
u (s)  \in \argmin\, \{ \E (u ,z(s)) : \text{$u\in H^{1}(\Om;\R^{2})$ with $u=g(t(s))$ on~$\partial_D \Om$} \}\,;
\end{displaymath}
\item[$(e)$] \emph{Energy balance}: the map $s \mapsto \F(u (s),z (s))$ is absolutely continuous and for a.e.~$s \in [0,S]$ it holds
\begin{equation}\label{e.viscousenergybalance}
\F'(u (s),z (s) ) = - |\partial_z^{-}\F| ( u (s) , z (s) ) \| z'(s) \|_{2} + \mathcal{P} ( u (s) , z(s), \dot g (t(s))) t'(s) \,.
\end{equation}
\end{itemize}
\end{definition}

\medskip
Next lemma provides a regularity property needed in our setting. The complete proof can be found in~\cite[Proposition~3.6]{A-N19}. For a more general statement we refer to~\cite{HerzogMeyerWachsmuth_JMAA11}.

\begin{lemma} \label{l.regLp}
 Let $g \in H^1{} ( [0,T] ; W^{1, p} (\Omega; \R^2) )$ for $p>2$. For $t \in [0,T]$ and $z \in H^1(\Omega; [0,1])$ let us set
\begin{displaymath}
 u (t,z)  \coloneq \argmin \,\{ \E (u ,z ) : \text{$u\in H^{1}(\Om;\R^{2})$ with $z=g(t)$ on~$\partial_{D} \Om$} \}\,.  
 \end{displaymath}
Then there exist an exponent $2 < r < p$ and a constant $C>0$ such that for every $t_1, t_2 \in [0,T]$ and every $z_1,z_2 \in H^1(\Omega; [0,1])$ it holds
\begin{displaymath}
	\| u  ( t_2 , z_2) - u  ( t_1 , z_1 ) \|_{W^{1,r}} \le C ( \| g(t_2) - g(t_1) \|_{W^{1,r}} + \| g \|_{L^\infty ( 0,T; \,W^{1,p})} \, \| z_2 - z_1 \|_{\ell}  ) , 
\end{displaymath}
where $1/\ell = 1/ r - 1/ p$.
\end{lemma}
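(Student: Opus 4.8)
The plan is to apply the general regularity and Lipschitz‑dependence result of Herzog–Meyer–Wachsmuth \cite{HerzogMeyerWachsmuth_JMAA11} to the linear(ized) elliptic system defining $u(t,z)$, treating $z$ as the source of the non‑constant coefficients. First I would observe that, for fixed $(t,z)$, the minimizer $u(t,z)$ is characterized by the Euler–Lagrange equation $\mathcal{P}(u(t,z),z,w) = 0$ for all test displacements $w\in H^1_0$ with $w=0$ on $\partial_D\Om$, i.e.
\begin{displaymath}
\int_\Om \partial_{\strain} W(z,\strain(u(t,z)))\,{:}\,\strain(w)\,\di x = 0\,,
\end{displaymath}
which, since $\partial_{\strain}W(z,\EE) = 2h(z)(\mu\EE_d+\kappa\EE_v^+) - 2\kappa\EE_v^-$, is a uniformly elliptic system whose coefficients depend (through $h(z)$, and through the measurable selector distinguishing $\EE_v^+$ from $\EE_v^-$) on $z$ and on $u(t,z)$ itself, but which is nonetheless uniformly monotone and Lipschitz in $\strain(u)$ by Lemma \ref{l.HMWw}(a)–(b), with ellipticity and Lipschitz constants that are uniform over $z\in[0,1]$ because $h(0)\le h(z)\le \max_{[0,1]}h$. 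The boundary datum $g(t)\in W^{1,p}$ enters as a shift: write $u(t,z) = g(t) + v$ with $v\in H^1_{0,\partial_D}$, so that $v$ solves a monotone system with right‑hand side built from $g(t)$.

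Next I would invoke the Gröger‑type $W^{1,r}$‑regularity theory: since $\partial_D\Om$ is regular in the sense of Gröger \cite{MR990595} and $\Om$ has Lipschitz boundary in $\R^2$, there exists $r\in(2,p)$ (depending only on $\Om$, $\partial_D\Om$, and the ellipticity/Lipschitz constants $c,C$, hence \emph{uniformly in} $z\in[0,1]$) such that the solution operator of each such frozen‑coefficient linear elasticity system maps $W^{-1,r}$ data into $W^{1,r}$, with a uniform bound. This is exactly the content of \cite[Proposition~3.6]{A-N19} / \cite{HerzogMeyerWachsmuth_JMAA11}, which I am allowed to use. Applying it to the difference $u(t_2,z_2) - u(t_1,z_1)$: subtract the two Euler–Lagrange equations and test; the difference solves a linear system (with coefficients depending on $z_2$ along the homotopy, or more precisely on an intermediate value, using the $C^{1,1}_{\loc}$‑smoothness of $W$) whose right‑hand side splits into (i) a term coming from $g(t_2)-g(t_1)$, contributing $\|g(t_2)-g(t_1)\|_{W^{1,r}}$, and (ii) a term coming from the variation of the coefficients, namely $\big(\partial_{\strain}W(z_2,\cdot)-\partial_{\strain}W(z_1,\cdot)\big)[\strain(u(t_1,z_1))]$, which by the $C^{1,1}_{\loc}$‑bound on $W$ in the $z$‑variable is pointwise controlled by $C\,|z_2-z_1|\,|\strain(u(t_1,z_1))|$. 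Hölder's inequality with $1/\ell = 1/r - 1/p$ then estimates this in the relevant negative Sobolev norm by $C\,\|z_2-z_1\|_{\ell}\,\|\nabla u(t_1,z_1)\|_{L^p}$, and an a priori $W^{1,p}$‑bound $\|u(t_1,z_1)\|_{W^{1,p}}\le C\|g\|_{L^\infty(0,T;W^{1,p})}$ — itself another instance of the same uniform higher‑integrability theory, since $p$ may need to be replaced by $\min\{p,r'\}$ for some $r'>2$, but after possibly shrinking $r$ this is harmless — yields the stated bound.

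The main obstacle, and the point requiring care rather than a black‑box citation, is the non‑smoothness of $W$ in $\EE$ coming from the positive/negative part decomposition $\EE_v^{\pm}$: the map $\EE\mapsto\partial_{\strain}W(z,\EE)$ is globally Lipschitz (Lemma \ref{l.HMWw}(b)) but \emph{not} $C^1$, so the usual "differentiate the equation in the parameter" argument is unavailable and one must instead work directly with finite differences and exploit uniform monotonicity, i.e. Lemma \ref{l.HMWw}(a), to absorb the left‑hand side. Concretely, after subtracting the equations one writes the coefficient‑variation in $z$ using that $\EE\mapsto\partial_{\strain}W(z,\EE)$ — for fixed $\EE$ — \emph{is} Lipschitz in $z$ (this follows since $h\in C^{1,1}_{\loc}$ and the $\pm$‑splitting does not involve $z$), so no differentiability in the rough variable is ever needed. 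Granting the uniform‑in‑$z$ Gröger regularity exponent $r$ (which is where \cite{HerzogMeyerWachsmuth_JMAA11} does the heavy lifting), the rest is a routine testing‑and‑Hölder computation, and I would not belabor it beyond indicating the two right‑hand‑side contributions and the choice of $\ell$.
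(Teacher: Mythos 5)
The paper itself does not prove this lemma; it defers entirely to \cite[Proposition~3.6]{A-N19} (and, for a more general statement, to \cite{HerzogMeyerWachsmuth_JMAA11}), so your reconstruction is the natural candidate for what those references do, and its outline is right: characterize $u(t,z)$ via the Euler--Lagrange system $\mathcal{P}(u(t,z),z,\cdot)=0$, exploit the uniform (in $z\in[0,1]$) monotonicity and Lipschitz bounds of Lemma~\ref{l.HMWw}, invoke Gr\"oger-type $W^{1,r}$ regularity for mixed boundary problems, then subtract the two equations and control the coefficient variation with H\"older. Your observation that $\partial_{\strain}W(z,\cdot)$ is only globally Lipschitz, not $C^1$, in $\strain$ --- so one must argue by monotonicity and finite differences rather than by differentiating the equation in the parameter --- is the right caveat and matches the intended argument.

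There is, however, one step you flag and then wave away too quickly. The H\"older step with the $\ell$ of the statement ($1/\ell = 1/r - 1/p$) requires $\|\strain(u(t_1,z_1))\|_{L^q}$ with $1/q = 1/r - 1/\ell = 1/p$, i.e.\ a uniform $W^{1,p}$ bound on $u(t,z)$. But Gr\"oger/HMW only yields $u(t,z)\in W^{1,r^*}$ for a problem-dependent exponent $r^*>2$ that need not reach $p$. If $r^*<p$, replacing $p$ by $\min\{p,r^*\}$ (your own hedge) changes the admissible exponent to $1/\ell'=1/r-1/r^*>1/r-1/p$, so $\ell'>\ell$, and on a bounded domain $\|z_2-z_1\|_{\ell}\le C\|z_2-z_1\|_{\ell'}$ runs the \emph{wrong} way: the estimate obtained with $\ell'$ does not imply the stated one with $\ell$. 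Shrinking $r$ does not repair this, since $r^*$ is fixed and both $1/\ell$ and $1/\ell'$ shift by the same amount $1/r$. So either one must additionally know $p\le r^*$, or the exponent $\ell$ in the conclusion must be defined via $\min\{p,r^*\}$ rather than $p$; your reconstruction, as written, does not close that gap, and the ``after possibly shrinking $r$ this is harmless'' claim is incorrect.
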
 

We collect here some technical results that will be useful in the next sections.

\begin{lemma}\label{lsc-F}
 Let $u_m, u\in H^{1}(\Om;\R^{2})$ 
and $z_m, z\in H^{1}(\Om;[0,1])$.  If $u_m \weakto u$ in~$H^{1}(\Om;\R^{2})$ and $z_m \rightharpoonup z$  in~$H^{1}(\Om)$, then
$$ 
   \F(u,z)\leq\liminf_{m\to\infty} \F (u_m,z_m) \,.
$$ 
\end{lemma}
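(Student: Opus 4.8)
The plan is to establish lower semicontinuity of $\F=\E+\D$ by treating the elastic energy $\E$ and the dissipation potential $\D$ separately, since both are nonnegative and we only need a liminf inequality for the sum. First I would observe that $\D(z) = \tfrac12\int_\Om |\nabla z|^2 + f(z)\,\di x$ is weakly lower semicontinuous on $H^1(\Om)$: the Dirichlet term $z\mapsto \int_\Om|\nabla z|^2\,\di x$ is convex and strongly continuous, hence weakly lsc; and since $z_m\weakto z$ in $H^1(\Om)$ implies (up to the full sequence, by Rellich) $z_m\to z$ strongly in $L^2(\Om)$ and a.e., together with the uniform bound $z_m\in[0,1]$, Fatou's lemma (or dominated convergence, using $f\in C^{1,1}_{loc}$ bounded on $[0,1]$) gives $\int_\Om f(z)\,\di x\le\liminf_m\int_\Om f(z_m)\,\di x$ — in fact equality.

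Next I would handle $\E(u,z)=\tfrac12\int_\Om W(z,\strain(u))\,\di x$ with $W(z,\EE)=h(z)(\mu|\EE_d|^2+\kappa|\EE_v^+|^2)+\kappa|\EE_v^-|^2$. The strategy is to exploit that $W(z,\cdot)$ is convex in $\EE$ for each fixed $z$ (it is a sum of a quadratic form in $\EE_d$, a convex function of $\EE_v^+$, and a convex function of $\EE_v^-$), while the $z$-dependence enters only through the continuous factor $h(z)\ge h(0)>0$. Concretely, from $z_m\weakto z$ in $H^1(\Om)$ we get, after passing to a subsequence realizing the liminf, $z_m\to z$ in $L^2(\Om)$ and a.e.\ in $\Om$; since $h$ is continuous, $h(z_m)\to h(z)$ a.e., and the $z_m$ are equibounded in $[0,1]$ so $h(z_m)$ is equibounded. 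Meanwhile $\strain(u_m)\weakto\strain(u)$ in $L^2(\Om;\M^2_s)$. Then I would invoke the standard Ioffe-type lower semicontinuity theorem for integral functionals $\int_\Om \phi(x,v_m)\,\di x$ with integrand $\phi(x,\xi):=\tfrac12 W(z(x),\xi)$ convex in $\xi$, combined with the strong convergence $h(z_m)\to h(z)$: write
$$
\int_\Om W(z_m,\strain(u_m))\,\di x = \int_\Om W(z,\strain(u_m))\,\di x + \int_\Om \big(h(z_m)-h(z)\big)\big(\mu|\strain_d(u_m)|^2+\kappa|\strain_v^+(u_m)|^2\big)\,\di x,
$$
where the first term is $\ge\int_\Om W(z,\strain(u))\,\di x + o(1)$ by convexity in the gradient variable (Ioffe / Tonelli), and the second term is controlled because $h(z_m)-h(z)\to0$ a.e.\ and is bounded, while $\mu|\strain_d(u_m)|^2+\kappa|\strain_v^+(u_m)|^2$ is equi-integrable (bounded in $L^1$ by the $H^1$-bound on $u_m$); a Vitali/Egorov argument then shows this crossed term vanishes in the liminf — or, more cleanly, that it is $o(1)$ times a bounded quantity up to an arbitrarily small error. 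Adding the two semicontinuity statements for $\E$ and $\D$ and passing to the liminf along the chosen subsequence yields $\F(u,z)\le\liminf_m\F(u_m,z_m)$, and since the subsequence was chosen to realize the $\liminf$, the full statement follows.

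The main obstacle is the crossed term $\int_\Om (h(z_m)-h(z))(\mu|\strain_d(u_m)|^2+\kappa|\strain_v^+(u_m)|^2)\,\di x$: the factor $h(z_m)-h(z)$ converges to zero only a.e.\ and in $L^2$ (not uniformly), while the quadratic strain term is merely bounded in $L^1$, so one cannot simply pass to the limit by a product-of-weak-and-strong argument. I expect to resolve this by the Dunford–Pettis / equi-integrability route: the sequence $|\strain(u_m)|^2$ is bounded in $L^1(\Om)$ but need not be equi-integrable, so instead I would decouple using the convexity inequality $W(z_m,\EE)\ge W(z,\EE) + \partial_z W(z,\EE)(z_m-z)$ — but $\partial_z W = h'(z)(\mu|\EE_d|^2+\kappa|\EE_v^+|^2)$ again carries a quadratic strain factor, so this alone is not enough either. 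The robust fix is to quote the general lower semicontinuity theorem for integrands $\phi(x,u_m,v_m)$ that are continuous in the $u_m$ slot (here $z_m$) and convex in the $v_m$ slot (here $\strain(u_m)$) — e.g.\ Ioffe's theorem or the version in De Giorgi–Ioffe–Ambrosio — applied directly to $\phi(x,z,\xi)=\tfrac12\big(h(z)(\mu|\xi_d|^2+\kappa|\xi_v^+|^2)+\kappa|\xi_v^-|^2\big)+\tfrac12 f(z)$ together with the lower-order term, using $z_m\to z$ strongly in $L^2$ (hence in measure) and $(\strain(u_m),\nabla z_m)\weakto(\strain(u),\nabla z)$ weakly in $L^2$; this handles the coupling in one stroke and is the cleanest way to present the argument.
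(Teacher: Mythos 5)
Your closing paragraph --- apply a Ioffe/De Giorgi-type lower semicontinuity theorem to the integrand, which is continuous in $z$ and convex in the gradient variables, using $z_m\to z$ in measure (from Rellich) and $(\strain(u_m),\nabla z_m)\weakto(\strain(u),\nabla z)$ in $L^2$ --- is exactly the argument the paper uses: it splits off $\D$ by convexity and cites Fonseca--Leoni, Theorem~7.5, for $\E$. The earlier detour through the crossed-term decomposition $\int(h(z_m)-h(z))(\mu|\strain_d(u_m)|^2+\kappa|\strain_v^+(u_m)|^2)\,\di x$ is unnecessary, and as you correctly observe it would not close without an equi-integrability of $|\strain(u_m)|^2$ that is not available from the $H^1$-bound alone; the Ioffe route you land on is the standard and correct way.
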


\begin{proof} 
The lower semicontinuity of $\D$ is obvious by convexity. The semicontinuity of $\E$ follows for instance from~\cite[Theorem 7.5]{Fonseca2007}. 
\end{proof}

Now we state a semicontinuity property of the slope~$|\partial_z^{-}\F|$. The proof can be found in~\cite[Lemma~3.9]{A-N19}.

\begin{lemma}\label{prop1}
Let $u_m, u\in H^{1}(\Om;\R^{2})$ 
and $z_m, z \in H^{1}(\Om)\cap L^{\infty}(\Om)$. 
If $u_m\to u$ in~$\UU$ and $z_m\rightharpoonup z$ weakly  in~$H^{1}(\Om)$, then
\begin{displaymath}
|\partial_z^{-}\F|(u,z)\leq\liminf_{m\to\infty} |\partial_z^{-}\F|(u_m,z_m)\,.
\end{displaymath}
\end{lemma}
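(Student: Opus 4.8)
The plan is to exploit the dual characterization of the slope provided by Lemma~\ref{SlopeLemma}, namely
\[
|\partial_z^{-}\F|(u,z) = \sup\,\{ -\partial_{z}\F(u,z)[\varphi] : \varphi \in H^{1}(\Om)\cap L^{\infty}(\Om),\ \varphi \leq 0,\ \|\varphi\|_{L^2}\leq 1\}\,,
\]
which turns a $\limsup$ over a shrinking neighbourhood into a supremum over a fixed admissible set, much easier to handle under weak convergence. I would argue by a standard ``recovery of the optimal test function'' scheme. First, set $L \coloneq \liminf_{m\to\infty} |\partial_z^{-}\F|(u_m,z_m)$; passing to a (not relabelled) subsequence we may assume the $\liminf$ is a genuine limit and, by Lemma~\ref{l.HMWw}(c) together with the uniform $H^1$ bound on $z_m$, that $L$ is finite (otherwise there is nothing to prove). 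Fix $\eta>0$ and choose, using Lemma~\ref{SlopeLemma}, an admissible $\varphi \in H^{1}(\Om)\cap L^{\infty}(\Om)$ with $\varphi\leq 0$, $\|\varphi\|_{L^2}\leq 1$, and
\[
-\partial_{z}\F(u,z)[\varphi] \geq |\partial_z^{-}\F|(u,z) - \eta\,.
\]
The same $\varphi$ is admissible in the dual problem for each $(u_m,z_m)$, so $|\partial_z^{-}\F|(u_m,z_m) \geq -\partial_{z}\F(u_m,z_m)[\varphi]$ for every $m$.

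The crux is then to show that $\partial_{z}\F(u_m,z_m)[\varphi] \to \partial_{z}\F(u,z)[\varphi]$ for this fixed $\varphi$. Using the explicit expression~\eqref{derF}, the term $\int_\Om \nabla z_m\cdot\nabla\varphi\,\di x$ converges since $z_m\weakto z$ in $H^1$; the term $\int_\Om f'(z_m)\varphi\,\di x$ converges because $z_m\to z$ strongly in $L^2$ (Rellich) and along a further subsequence a.e., $f'$ is continuous, $0\leq z_m\leq 1$ gives a uniform bound on $f'(z_m)$ (as $f\in C^{1,1}_{loc}$), so dominated convergence applies. The genuinely delicate term is
\[
\int_\Om h'(z_m)\varphi\bigl(\mu|\strain_d(u_m)|^2 + \kappa|\strain_v^{+}(u_m)|^2\bigr)\,\di x\,.
\]
Here I would use that $u_m\to u$ in $\UU$ — which in this paper's setting should mean strong convergence in $H^1(\Om;\R^2)$, hence $\strain(u_m)\to\strain(u)$ strongly in $L^2$ and, along a subsequence, a.e. — so $\mu|\strain_d(u_m)|^2+\kappa|\strain_v^{+}(u_m)|^2 \to \mu|\strain_d(u)|^2+\kappa|\strain_v^{+}(u)|^2$ strongly in $L^1$; combined with $h'(z_m)\to h'(z)$ a.e. and the uniform bound $|h'(z_m)|\leq C$ on $[0,1]$ (again $h\in C^{1,1}_{loc}$), the product converges. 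Since $\varphi\in L^\infty$, a Vitali/generalized dominated convergence argument (or: strong $L^1$ convergence of the strain factor times bounded a.e.-convergent factors) yields convergence of the integral. The main obstacle, and the only point requiring care, is precisely to justify passing to the limit in this quadratic-in-$\strain(u_m)$ term; everything hinges on having \emph{strong} convergence $u_m\to u$, which is exactly the hypothesis $u_m\to u$ in $\UU$, so no weak-lower-semicontinuity subtlety intervenes and one genuinely gets an equality in the limit for the fixed test function $\varphi$.

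Combining the pieces, for every $\eta>0$,
\[
L = \lim_{m\to\infty}|\partial_z^{-}\F|(u_m,z_m) \geq \lim_{m\to\infty}\bigl(-\partial_{z}\F(u_m,z_m)[\varphi]\bigr) = -\partial_{z}\F(u,z)[\varphi] \geq |\partial_z^{-}\F|(u,z) - \eta\,,
\]
and letting $\eta\to 0$ gives $L\geq |\partial_z^{-}\F|(u,z)$. Since the subsequence extracted at the start was arbitrary (among those along which $|\partial_z^{-}\F|(u_m,z_m)$ converges to the $\liminf$), this proves $|\partial_z^{-}\F|(u,z)\leq \liminf_{m\to\infty}|\partial_z^{-}\F|(u_m,z_m)$, as claimed. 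I expect the write-up to be short, the only genuine content being the limit-passage in the $h'(z_m)\varphi\,|\strain(u_m)|^2$ integral described above.
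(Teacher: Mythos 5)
Your argument is correct, and in fact this is the natural and essentially unique way to prove the lemma: rewrite the slope via Lemma~\ref{SlopeLemma} as a supremum over a fixed admissible cone of test functions, pick an almost-optimal $\varphi$ for $(u,z)$, and show $\partial_z\F(u_m,z_m)[\varphi]\to\partial_z\F(u,z)[\varphi]$. The paper itself gives no proof of this statement but defers to \cite[Lemma~3.9]{A-N19}; your proof is what that reference does, and your identification of the quadratic strain term as the only nontrivial limit passage (handled correctly via strong $L^1$ convergence of $|\strain_d(u_m)|^2$, $|\strain_v^+(u_m)|^2$ against a bounded, a.e.-convergent factor $h'(z_m)\varphi$) is on point. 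One small remark: for the bounds $|h'(z_m)|\le C$ and $|f'(z_m)|\le C$ you tacitly use $0\le z_m\le 1$, which the lemma's hypotheses ($z_m\in H^1\cap L^\infty$, no uniform bound) do not literally guarantee; in every application in the paper the $z_m$ do lie in $[0,1]$, and this is what is intended, but it is worth stating the uniform $L^\infty$ bound explicitly as a hypothesis when invoking it. Also, the appeal to Lemma~\ref{l.HMWw}(c) to justify finiteness of $L$ is unnecessary (if $L=+\infty$ there is nothing to prove), and the closing remark about the subsequence being ``arbitrary'' is superfluous once you have established the inequality along the subsequence realizing the $\liminf$.
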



Finally, the next lemma shows a continuity property of minimizer of~$\E(\cdot, z)$ w.r.t.~$z \in H^{1}(\Om;[0,1])$ and $t\in [0,T]$. We refer to~\cite[Proposition~3.7]{A-N19} for the complete proof.

\begin{lemma}\label{lemma1}
Let $g_m,g_{\infty},u_m,u_{\infty}\in H^{1}(\Om;\R^{2})$ and let $z_m,z_{\infty}\in H^{1}(\Om;[0,1])$. Assume that $g_m \to g_{\infty}$ in~$H^{1}(\Om;\R^{2})$, $u_m \rightharpoonup u_{\infty}$ weakly in~$H^{1}(\Om;\R^{2})$, $z_m\rightharpoonup z_{\infty}$ weakly in~$H^{1}(\Om)$, and that
\begin{displaymath}
u_m \in \argmin\, \{ \E (u ,z_m ) : \text{ $u\in H^{1}(\Om;\R^{2})$ with $u=g_m$ on~$\partial_D \Om$}\} \,. 
\end{displaymath}
Then,
\begin{displaymath}
u_\infty \in \argmin \,\{ \E (u ,z_\infty ) : \text{ $u\in H^{1}(\Om;\R^{2})$ with $u=g_\infty$ on~$\partial_D \Om$} \}
\end{displaymath}
and $u_m\to u_{\infty}$ strongly in $H^{1}(\Om;\R^{2})$.
\end{lemma}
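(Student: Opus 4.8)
The plan is to establish the two conclusions---that $u_\infty$ is a minimizer of $\E(\cdot,z_\infty)$ under the boundary condition $g_\infty$, and that $u_m\to u_\infty$ strongly in $H^1$---by the standard direct-method/$\Gamma$-liminf-plus-recovery-sequence argument, exploiting the uniform coercivity and strict convexity encoded in Lemma~\ref{l.HMWw}. First I would fix an arbitrary competitor $w\in H^1(\Om;\R^2)$ with $w=g_\infty$ on $\partial_D\Om$ and build from it a competitor $w_m$ for the $m$-th problem by correcting the boundary datum, namely $w_m\coloneq w+(g_m-g_\infty)$, so that $w_m=g_m$ on $\partial_D\Om$ and $w_m\to w$ strongly in $H^1$ by the assumed convergence $g_m\to g_\infty$. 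By minimality of $u_m$ one has $\E(u_m,z_m)\le\E(w_m,z_m)$, and since $z_m\in[0,1]$ and $W(z,\cdot)$ is bounded above by $C|\EE|^2$ (Lemma~\ref{l.HMWw}(c) gives the right growth; alternatively $h(z_m)\le h(1)$), the right-hand side is bounded uniformly in $m$; combined with Korn's inequality and the boundary condition this gives a uniform $H^1$-bound on $u_m$, consistent with the assumed weak convergence $u_m\weakto u_\infty$.

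Next I would pass to the limit. For the liminf inequality, weak $H^1$-convergence of $u_m$ together with weak $H^1$-convergence of $z_m$ (whence, by Rellich, $z_m\to z_\infty$ strongly in $L^q$ for all $q<\infty$ and, up to a subsequence, a.e.) yields $\E(u_\infty,z_\infty)\le\liminf_m\E(u_m,z_m)$: this is exactly the type of lower semicontinuity in Lemma~\ref{lsc-F} applied to the elastic part only, or one invokes \cite[Theorem~7.5]{Fonseca2007} directly since $(z,\EE)\mapsto W(z,\EE)$ is continuous, nonnegative, and convex in $\EE$. For the reverse bound, using the recovery competitor $w_m\to w$ from the previous paragraph I would show $\limsup_m\E(w_m,z_m)\le\E(w,z_\infty)$ --- here strong convergence $\strain(w_m)\to\strain(w)$ in $L^2$ and a.e. convergence of $z_m$, together with the local Lipschitz bound on $W$ and a dominated-convergence argument (the dominating function being $C(|\strain(w_m)|^2+|\strain(w)|^2)$, equi-integrable by strong $L^2$-convergence), do the job. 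Chaining the inequalities $\E(u_\infty,z_\infty)\le\liminf_m\E(u_m,z_m)\le\limsup_m\E(w_m,z_m)\le\E(w,z_\infty)$ and taking the infimum over admissible $w$ proves that $u_\infty$ is a minimizer.

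Strong convergence then follows from uniform convexity. Taking $w=u_\infty$ in the above chain forces $\E(u_m,z_m)\to\E(u_\infty,z_\infty)$. Now I would use Lemma~\ref{l.HMWw}(a): testing the monotonicity inequality with $\EE_1=\strain(u_m)$, $\EE_2=\strain(u_\infty)$ at fixed $z=z_m$ gives
\begin{displaymath}
c\,\|\strain(u_m)-\strain(u_\infty)\|_{L^2}^2\le\int_\Om\bigl(\partial_\EE W(z_m,\strain(u_m))-\partial_\EE W(z_m,\strain(u_\infty))\bigr){\,:\,}\bigl(\strain(u_m)-\strain(u_\infty)\bigr)\,\di x\,.
\end{displaymath}
The first term expands, via the Euler--Lagrange equation for $u_m$ (which holds because $u_m-u_\infty$ vanishes on $\partial_D\Om$, so it is an admissible variation), into a quantity controlled by $\E(u_m,z_m)\to\E(u_\infty,z_\infty)$; the second term converges by strong $L^2$-convergence of $\strain(u_\infty)$ against the weak limit, after checking $\partial_\EE W(z_m,\strain(u_\infty))\to\partial_\EE W(z_\infty,\strain(u_\infty))$ strongly in $L^2$ (a.e. convergence of $z_m$ plus the growth bound in Lemma~\ref{l.HMWw}(c) and dominated convergence). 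Thus $\strain(u_m)\to\strain(u_\infty)$ in $L^2$, and Korn's inequality upgrades this to $u_m\to u_\infty$ in $H^1$. The main obstacle is bookkeeping the $z$-dependence cleanly: one must ensure all the passages to the limit involving $W(z_m,\cdot)$ and $\partial_\EE W(z_m,\cdot)$ are justified by a.e. convergence of $z_m$ (via Rellich, along a subsequence) together with the uniform $C^{1,1}_{\mathrm{loc}}$ bounds, and then recover the full sequence by the usual subsequence-and-uniqueness argument, strict convexity of $\E(\cdot,z_\infty)$ guaranteeing that the limit minimizer is unique.
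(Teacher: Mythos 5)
Your overall strategy is correct and standard: establish minimality of $u_\infty$ by combining a liminf inequality with a recovery competitor $w_m = w + (g_m - g_\infty)$, and upgrade to strong convergence via the uniform monotonicity of $\partial_\EE W$ in Lemma~\ref{l.HMWw}(a). The paper itself only cites the proof to \cite[Proposition~3.7]{A-N19}, so there is no in-text proof to compare with; on its own merits, your argument has one genuine error in the final step.

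You write that ``the Euler--Lagrange equation for $u_m$ \ldots holds because $u_m - u_\infty$ vanishes on $\partial_D\Om$, so it is an admissible variation.'' This is false: $u_m = g_m$ and $u_\infty = g_\infty$ on $\partial_D\Om$, so $u_m - u_\infty = g_m - g_\infty$ on $\partial_D\Om$, which is nonzero in general (it only tends to zero). As stated, you cannot kill the term $\int \partial_\EE W(z_m,\strain(u_m)) : \strain(u_m - u_\infty)\,\di x$ by the Euler--Lagrange equation. The correct move is to introduce $\hat u_m \coloneq u_\infty + (g_m - g_\infty)$, which does satisfy $\hat u_m = g_m$ on $\partial_D\Om$ and converges strongly to $u_\infty$ in $H^1$, and apply the monotonicity inequality with $\EE_1 = \strain(u_m)$, $\EE_2 = \strain(\hat u_m)$. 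Then $u_m - \hat u_m$ is an admissible variation for $u_m$, so the Euler--Lagrange equation does annihilate the first integral, and the second integral tends to zero by the strong $L^2$ convergence $\partial_\EE W(z_m,\strain(\hat u_m)) \to \partial_\EE W(z_\infty,\strain(u_\infty))$ (Lemma~\ref{l.HMWw}(b)--(c), a.e.\ convergence of $z_m$, and dominated convergence) paired against the weak convergence $\strain(u_m - \hat u_m)\rightharpoonup 0$. Equivalently, you can keep your decomposition but test with $u_m - u_\infty - (g_m - g_\infty)$ and absorb the correction $\int \partial_\EE W(z_m,\strain(u_m)):\strain(g_m - g_\infty)\,\di x \to 0$ via the $L^2$ bound on $\partial_\EE W(z_m,\strain(u_m))$ from Lemma~\ref{l.HMWw}(c). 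The rest of the argument --- the $\Gamma$-type liminf/recovery passage, the dominated-convergence justification for $\E(w_m,z_m)\to\E(w,z_\infty)$, and the subsequence-plus-uniqueness reduction --- is fine. You should also note in passing that $u_\infty = g_\infty$ on $\partial_D\Om$, which follows from weak $H^1$ convergence and compactness of the trace operator, so that $u_\infty$ is indeed admissible.
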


\section{Alternate minimization algorithm and a priori estimates}\label{s.algorithm}

Let us fix a time horizon $T>0$ and a Dirichlet boundary datum $g \in H^{1} ([0,T] ; W^{1,p}(\Om;\R^{2}))$, with~ $p\in(2, +\infty)$, to be applied on the Dirichlet part of the boundary~$\partial_D \Om \subseteq \partial \Om$. For fixed $\delta>0$, the construction of a gradient flow of~$\F$ is done by time-discretization (see, e.g.,~\cite{MR2401600}). Here we couple this standard procedure with an alternate (or staggered) minimization algorithm, which is by now typical in computational fracture mechanics~\cite{MR3376787, MR2341850, MR1745759, Burke2010, Burke2013}. 

Let us describe the minimization scheme. For every $k\in\mathbb{N}$ we define the time step $\tau_k\coloneq T/k$ and the time nodes $t_{i}^{k}\coloneq i \tau_{k}$ for $i \in \{ 0,\ldots, k \}$. For $i=1, \ldots, k$ assume that we know the configuration~$(u^{k}_{i-1} , z^{k}_{i-1})$ at time~$t^{k}_{i-1}$. The new state~$(u^{k}_{i} , z^{k}_{i})$ at time~$t^{k}_{i}$ is constructed as limit of an alternate minimization procedure: for $j=0$ we set $u^{k}_{i,0}\coloneq u^{k}_{i-1}$ and $z^{k}_{i,0}\coloneq z^{k}_{i-1}$, while for $j\geq 1$ we define
\begin{eqnarray}
&&\displaystyle u^{k}_{i,j} \coloneq \argmin\, \{ \F (u, z^{k}_{i,j-1}) : \, u\in H^{1}(\Om;\R^{2}),\, \text{$u = g(t^{k}_{i})$ on $\partial_{D} \Om$}\}\,, \label{minu} \\[1mm]
&& \displaystyle z^{k}_{i,j} \coloneq \argmin \, \Big\{ \F(u^{k}_{i,j} , z) + \tfrac{\delta}{2\tau_k} \| z - z^{k}_{i-1} \|_{2}^{2} : \, z\in H^{1}(\Om) , \, z\leq z^{k}_{i-1}\Big\}\,. \label{minz}
\end{eqnarray}
We notice that the solutions of~\eqref{minu} and~\eqref{minz} exist and are unique by strict convexity.

\begin{remark}
We stress that the irreversibility constraint, stating that the damage (or the crack) can only grow and healing is not allowed, is expressed in~\eqref{minz} by the inequality~$z\leq z^{k}_{i-1}$, which only involves the configuration at time~$t^{k}_{i-1}$. This is in contrast with the recent theoretical literature developed in~\cite{Almi2017, A-N19, MR3606956, Knees2018, MR3669838}, where the irreversibility has been implemented in a stronger way by imposing $z \leq z^{k}_{i,j-1}$. The latter choice, even if mathematically correct, could lead to the accumulation of numerical error in the simulation of the fracture process. For this reason, in most of the applications in computational fracture mechanics (see, e.g.,~\cite{MR3376787, MR2341850, MR1745759, Burke2010, Burke2013}) the constraint in~\eqref{minz} is considered.

Again in contrast with~\cite{Almi2017, A-N19, MR3606956, MR3669838}, in~\eqref{minz} we have perturbed the functional~$\F$ with an $L^{2}$-penalization of the distance from~$z^{k}_{i-1}$. This is necessary in order to carry out our analysis, and leads in the limit as~$k\to\infty$ to the construction of a gradient flow of the energy~$\F$ according to Definition~\ref{d.Evolution}. A quasi-static evolution is recovered only in the limit as the viscosity parameter~$\delta$ tends to~$0$. We refer to Section~\ref{s.evolution} for the full discussion. We further notice that a similar penalization has been used also in~\cite{MR3945577, MR3021776, MR3332887,MR3893258, Negri_ACV}. However, only~\cite{MR3945577} deals with an alternate minimization procedure, where the minimization in~\eqref{minz} is unconstrained and the irreversibility is imposed a posteriori by a simple pointwise minimization, which made the study of the viscosity limit $\delta \to 0$ not accessible.
\end{remark}

In the following proposition we prove the convergence, up to subsequence, of the sequence $(u^{k}_{i,j} , z^{k}_{i,j})$.

\begin{proposition}\label{p.convj}
Fix $\delta>0$, $k\in\mathbb{N}$, and $i\in\{1,\ldots, k\}$, and let $u^{k}_{i,j} \in H^{1}(\Om;\R^{2})$ and $z^{k}_{i,j}\in H^{1}(\Om)$ be defined by~\eqref{minu}-\eqref{minz}. Then, there exist $\bar{u} \in H^{1}(\Om;\R^{2})$ and $\bar{z} \in H^{1}(\Om;[0,1])$ such that, up to a subsequence, $u^{k}_{i,j} \to \bar{u}$ in~$H^{1}(\Om;\R^{2})$ and $z^{k}_{i,j}\rightharpoonup \bar z$ weakly in~$H^{1}(\Om)$ as $j\to\infty$, $\bar{u} = g(t^{k}_{i})$ on~$\partial_D \Om$, and
\begin{eqnarray}
&& \displaystyle \bar{u} = \argmin \, \{\F(u, \bar z) : \, u\in H^{1}(\Om;\R^{2}), \, \text{$ u = g(t^{k}_{i})$ on $\partial_{D}\Om$}\}\,, \label{e.1}\\[1mm]
&& \displaystyle \bar{z} = \argmin \, \Big\{ \F(\bar{u}, z) + \tfrac{\delta}{2\tau_{k}} \| z - z^{k}_{i-1} \|_{2}^{2} : \, z\in H^{1}(\Om),\, z\leq z^{k}_{i-1} \Big\}\,. \label{e.2}
\end{eqnarray}
\end{proposition}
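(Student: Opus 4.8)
The plan is to first establish a monotone energy-decrease along the alternating scheme, then use the resulting a priori bounds to extract weakly convergent subsequences, and finally upgrade the weak convergence of the displacements to strong convergence and pass to the limit in the two Euler--Lagrange problems. First I would observe that each half-step in \eqref{minu}-\eqref{minz} can only decrease the (penalized) energy: by minimality in \eqref{minu}, $\F(u^k_{i,j}, z^k_{i,j-1}) \le \F(u^k_{i,j-1}, z^k_{i,j-1})$, and by minimality in \eqref{minz}, $\F(u^k_{i,j}, z^k_{i,j}) + \tfrac{\delta}{2\tau_k}\|z^k_{i,j} - z^k_{i-1}\|_2^2 \le \F(u^k_{i,j}, z^k_{i,j-1}) + \tfrac{\delta}{2\tau_k}\|z^k_{i,j-1} - z^k_{i-1}\|_2^2$. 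Chaining these, the sequence $j \mapsto \F(u^k_{i,j}, z^k_{i,j}) + \tfrac{\delta}{2\tau_k}\|z^k_{i,j} - z^k_{i-1}\|_2^2$ is non-increasing, hence bounded. Since $\F(u,z) = \E(u,z) + \D(z)$ with $\D$ coercive on $H^1(\Om)$ (using strong convexity of $f$ and control of $\|\nabla z\|_2^2$) and $\E$ coercive on the affine space $\{u = g(t^k_i)\ \text{on}\ \partial_D\Om\}$ via Korn's inequality and Lemma \ref{l.HMWw}$(a)$ (recall $h(z) \ge h(0) > 0$), we get uniform bounds $\|u^k_{i,j}\|_{H^1} + \|z^k_{i,j}\|_{H^1} \le M$ in $j$. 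Moreover $0 \le z^k_{i,j} \le z^k_{i-1} \le 1$ follows inductively: the constraint forces $z^k_{i,j} \le z^k_{i-1}$, and truncation at $0$ from below strictly decreases both $\D$ and the penalization while not increasing $\E$ (as $h$ is non-decreasing on $[0,+\infty)$), so the minimizer is nonnegative.

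Next I would extract the subsequence. From the bounds above, along a subsequence $u^k_{i,j} \weakto \bar u$ in $H^1(\Om;\R^2)$ and $z^k_{i,j} \weakto \bar z$ in $H^1(\Om)$, with $0 \le \bar z \le z^k_{i-1} \le 1$ a.e.\ and $\bar u = g(t^k_i)$ on $\partial_D\Om$ (the trace is preserved under weak $H^1$ convergence). The key structural point is that consecutive iterates become close: since $\sum_j \tfrac{\delta}{2\tau_k}\big(\|z^k_{i,j-1} - z^k_{i-1}\|_2^2 - \|z^k_{i,j} - z^k_{i-1}\|_2^2 + \F(u^k_{i,j-1},z^k_{i,j-1}) - \F(u^k_{i,j},z^k_{i,j})\big)$ telescopes and is bounded, and since by minimality in \eqref{minz} one has (expanding the quadratic penalization) a gain of order $\|z^k_{i,j} - z^k_{i,j-1}\|_2^2$ at each $z$-step, I can conclude $\|z^k_{i,j} - z^k_{i,j-1}\|_2 \to 0$; consequently $z^k_{i,j-1} \weakto \bar z$ along the same subsequence as well. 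Similarly, strong convergence of the $z$'s (at least in $L^2$) together with Lemma \ref{l.HMWw}$(a)$ upgrades the weak convergence $u^k_{i,j} \weakto \bar u$ to strong convergence in $H^1$: testing the Euler--Lagrange equation $\mathcal P(u^k_{i,j}, z^k_{i,j-1}, w) = 0$ for $w \in H^1_{0,\partial_D}(\Om;\R^2)$ with $w = u^k_{i,j} - \bar u$, using $(a)$ and the fact that $\partial_\strain W(z^k_{i,j-1},\cdot)$ depends continuously on $z^k_{i,j-1}$ in $L^2$, yields $\|\strain(u^k_{i,j} - \bar u)\|_2 \to 0$, hence strong $H^1$ convergence by Korn.

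Finally I would pass to the limit in the minimality statements. For \eqref{e.1}: fix a competitor $u$ with $u = g(t^k_i)$ on $\partial_D\Om$; from $\F(u^k_{i,j}, z^k_{i,j-1}) \le \F(u, z^k_{i,j-1})$, the right-hand side converges to $\F(u,\bar z)$ by continuity of $\E(u,\cdot)$ along $z^k_{i,j-1} \to \bar z$ in $L^2$ with the bound $z^k_{i,j-1} \le 1$ (dominated convergence on $\int h(z)(\cdots)\,\di x$, plus continuity of $\D$), and the left-hand side has $\liminf \ge \F(\bar u, \bar z)$ by Lemma \ref{lsc-F}; by uniqueness of the strictly convex minimizer, $\bar u$ solves \eqref{e.1}. (Alternatively, and more cleanly, pass to the limit directly in the linear equation $\mathcal P(u^k_{i,j}, z^k_{i,j-1}, w) = 0$.) For \eqref{e.2}: the subtlety is that the constraint set $\{z \le z^k_{i-1}\}$ is the \emph{same} at every $j$, so given any competitor $z \le z^k_{i-1}$ we may use it directly in \eqref{minz} to get $\F(u^k_{i,j}, z^k_{i,j}) + \tfrac{\delta}{2\tau_k}\|z^k_{i,j} - z^k_{i-1}\|_2^2 \le \F(u^k_{i,j}, z) + \tfrac{\delta}{2\tau_k}\|z - z^k_{i-1}\|_2^2$; letting $j\to\infty$, the right-hand side tends to $\F(\bar u, z) + \tfrac{\delta}{2\tau_k}\|z - z^k_{i-1}\|_2^2$ using strong $H^1$ convergence of $u^k_{i,j}$, while the left-hand side is bounded below by $\F(\bar u, \bar z) + \tfrac{\delta}{2\tau_k}\|\bar z - z^k_{i-1}\|_2^2$ via Lemma \ref{lsc-F} and weak $L^2$-lower semicontinuity of the norm; uniqueness then gives \eqref{e.2}, and also $\bar z \in H^1(\Om;[0,1])$. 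I expect the main obstacle to be the strong $H^1$ convergence of $z^k_{i,j}$ (or at least enough convergence of the $z$-iterates to make the $W$-integral pass to the limit and to upgrade the $u$-convergence) — this is where the $L^2$-penalization is essential, since it forces $\|z^k_{i,j} - z^k_{i,j-1}\|_2 \to 0$ and thereby couples the two alternating sequences; without it the argument of \cite{MR3669838, A-N19} relied on the monotonicity $z^k_{i,j} \le z^k_{i,j-1}$, which is unavailable here.
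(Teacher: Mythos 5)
Your proposal is correct in substance but takes a genuinely different route from the paper on the central technical step, namely how to identify the weak limits of the two interleaved subsequences $z^{k}_{i,j}$ and $z^{k}_{i,j-1}$. The paper does \emph{not} prove that consecutive $z$-iterates become close. It allows them to have a priori distinct weak $H^{1}$-limits $\bar z$ and $\hat z$ along the same subsequence, applies Lemma~\ref{lemma1} to obtain strong $H^{1}$-convergence of $u^{k}_{i,j_{l}}$ and minimality of $\bar u$ for $\E(\cdot,\hat z)$, proves~\eqref{e.2} directly, and then closes the loop by a further chain of minimality inequalities plus uniqueness of the strictly convex constrained minimizer, yielding $\hat z=\bar z$ a posteriori. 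You instead show directly that $\|z^{k}_{i,j}-z^{k}_{i,j-1}\|_{2}\to 0$ by combining the telescoping decrease of $j\mapsto\F(u^{k}_{i,j},z^{k}_{i,j})+\tfrac{\delta}{2\tau_{k}}\|z^{k}_{i,j}-z^{k}_{i-1}\|_{2}^{2}$ with the $\tfrac{\delta}{\tau_{k}}$-strong $L^{2}$-convexity of the penalized functional on the convex constraint set, so that the two interleaved sequences automatically share a common limit. This is a clean and correct observation (the constraint set is fixed in $j$, so the standard strong-convexity estimate $\phi(z^{k}_{i,j-1})\ge\phi(z^{k}_{i,j})+\tfrac{\delta}{2\tau_{k}}\|z^{k}_{i,j-1}-z^{k}_{i,j}\|_{2}^{2}$ applies), and it avoids the paper's a posteriori identification step at the price of leaning more explicitly on the quadratic structure of the viscous penalization. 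Your sketch of strong $H^{1}$-convergence of $u^{k}_{i,j}$ via the Euler--Lagrange equation and Lemma~\ref{l.HMWw}$(a)$ is essentially a rederivation of Lemma~\ref{lemma1}, which the paper simply cites; either way is fine.

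One inaccuracy worth fixing: in the passage to the limit for~\eqref{e.1} you assert that $\F(u,z^{k}_{i,j-1})\to\F(u,\bar z)$ invoking ``continuity of $\D$''. This is false as stated, since $\D(z)=\tfrac12\int_{\Om}|\nabla z|^{2}+f(z)\,\di x$ is only lower semicontinuous along weak $H^{1}$-convergence of the $z$'s, not continuous. The gap is cosmetic and easily closed: the term $\D(z^{k}_{i,j-1})$ appears \emph{identically} on both sides of the inequality $\F(u^{k}_{i,j},z^{k}_{i,j-1})\le\F(u,z^{k}_{i,j-1})$ and cancels, so one only needs to pass to the limit in $\E(u^{k}_{i,j},z^{k}_{i,j-1})\le\E(u,z^{k}_{i,j-1})$, and both sides do converge given strong $H^{1}$-convergence of $u^{k}_{i,j}$, $L^{2}$-convergence of $z^{k}_{i,j-1}$, and the uniform bound $0\le z^{k}_{i,j-1}\le1$; alternatively, as you yourself note, one may pass to the limit directly in the linear Euler--Lagrange equation $\mathcal{P}(u^{k}_{i,j},z^{k}_{i,j-1},w)=0$. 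With that correction the argument is complete.
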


\begin{proof}
By definition of~$u^{k}_{i,j}$ and of~$z^{k}_{i,j}$ we have that
\begin{displaymath}
\begin{split}
\F ( u^{k}_{i,j} , v^{k}_{i,j}) +& \tfrac{\delta}{2 \tau_{k}} \| z^{k}_{i,j} - z^{k}_{i-1}\|_{2}^{2}  \leq \F ( u^{k}_{i,j} , z^{k}_{i,j-1} ) + \tfrac{\delta}{2 \tau_{k}} \| z^{k}_{i,j-1} - z^{k}_{i-1} \|_{2}^{2} 
\\
&
\leq \F ( u^{k}_{i,j-1} , z^{k}_{i,j-1}) + \tfrac{\delta}{2 \tau_{k}} \| z^{k}_{i,j-1} - z^{k}_{i-1} \|_{2}^{2}
\leq \ldots \leq \F ( u^{k}_{i-1} + g ( t^{k}_{i} ) - g ( t^{k}_{i-1} ), z^{k}_{i-1} ) \,.
\end{split}
\end{displaymath}
Hence, the sequences $u^{k}_{i,j}$ and~$z^{k}_{i,j}$ are bounded in~$H^{1}$, uniformly w.r.t.~$j$. Thus, there exist a subsequence~$j_{l}$,~$\bar{u} \in H^{1}(\Om;\R^{2})$, and $\bar{z}\in H^{1}(\Om ; [0,1])$ such that $u^{k}_{i, j_{l}} \rightharpoonup \bar{u}$ weakly in~$H^{1}(\Om;\R^{2})$ and $z^{k}_{i,j_{l}} \rightharpoonup \bar{z}$ weakly in~$H^{1}(\Om)$ as $l\to\infty$. Up to a further (not relabeled) subsequence, we may assume that the above convergences are strong in~$L^{2}(\Om)$ and that $z^{k}_{i,j_{l}-1}\rightharpoonup \hat{z}$ weakly in~$H^{1}(\Om)$, for some $\hat{z} \in H^{1}(\Om;[0,1])$.

By Lemma~\ref{lemma1} and by the above convergences, we immediately deduce that $u^{k}_{i, j_{l}}\to \bar{u}$ in~$H^{1}(\Om;\R^{2})$ and that
\begin{displaymath}
\bar{u} = \argmin\, \{\F(u, \hat{z}) : \, \text{$u\in H^{1}(\Om;\R^{2})$, $u= g(t^{k}_{i})$ on~$\partial_{D}\Om$}\}\,.
\end{displaymath}
Moreover, we have that~\eqref{e.2} is satisfied. Indeed, by~\eqref{minz} we have that for every~$z\in H^{1}(\Om)$ with $z\leq z^{k}_{i-1}$
\begin{displaymath}
\F(u^{k}_{i,j_{l}}, z^{k}_{i,j_{l}}) + \tfrac{\delta}{2\tau_{k}} \| z^{k}_{i,j_{l}} - z^{k}_{i-1} \|_{2}^{2} \leq \F( u^{k}_{i,j_{l}}, z) +\tfrac{\delta}{2\tau_{k}} \| z - z^{k}_{i-1} \|_{2}^{2}\,.
\end{displaymath}
Therefore, passing to the liminf as $l\to\infty$ in the previous inequality, applying Lemma~\ref{lsc-F}, and exploiting the strong convergence of~$u^{k}_{i,j_{l}}$ we get~\eqref{e.2}.

It remains to prove that $\hat{z} = \bar{z}$. To do this, we notice that
\begin{displaymath}
\begin{split}
\F(u^{k}_{i,j_{l}}, z^{k}_{i, j_{l}-1}) +  \tfrac{\delta}{2\tau_{k}} \| z^{k}_{i,j_{l}-1} - z^{k}_{i-1} \|_{2}^{2} & \leq \F(u^{k}_{i,j_{l}-1}, z^{k}_{i, j_{l}-1}) + \tfrac{\delta}{2\tau_{k}} \| z^{k}_{i,j_{l}-1} - z^{k}_{i-1} \|_{2}^{2}
\\
&
\leq \ldots \leq \F(u^{k}_{i,j_{l-1}}, z^{k}_{i, j_{l-1}}) + \tfrac{\delta}{2\tau_{k}} \| z^{k}_{i,j_{l-1}} - z^{k}_{i-1} \|_{2}^{2}
\\
&
\leq \F(u^{k}_{i,j_{l-1}}, \bar{z}) + \tfrac{\delta}{2\tau_{k}} \| \bar{z} - z^{k}_{i-1} \|_{2}^{2}\,.
\end{split}
\end{displaymath}
Hence, passing to the liminf in the previous chain of inequalities and applying again Lemma~\ref{lsc-F} we obtain
\begin{displaymath}
\F(\bar{u}, \hat{z}) + \tfrac{\delta}{2\tau_{k}} \| \hat z - z^{k}_{i-1} \|_{2}^{2} \leq \F(\bar u, \bar{z}) +  \tfrac{\delta}{2\tau_{k}} \| \bar{z} - z^{k}_{i-1} \|_{2}^{2}\,.
\end{displaymath}
By uniqueness of minimizer, this implies that $\hat{z} = \bar{z}$, and the proof is thus concluded.
\end{proof}

In view of Proposition~\ref{p.convj}, we are allowed to define
\begin{displaymath}
u^{k}_{i}\coloneq \lim_{j\to\infty} u^{k}_{i,j}\qquad\text{and}\qquad z^{k}_{i}\coloneq \lim_{j\to\infty} z^{k}_{i,j}\,,
\end{displaymath}
where the limits are intended to be up to a subsequence and strong in~$H^{1}(\Om;\R^{2})$ and weak in~$H^{1}(\Om)$, respectively. In particular,~$u^{k}_{i}$ and~$z^{k}_{i}$ solve
\begin{eqnarray}
&& \displaystyle u^{k}_{i} = \argmin \, \{\F(u,  z^{k}_{i} ) : \, u\in H^{1}(\Om;\R^{2}), \, \text{$ u = g(t^{k}_{i})$ on $\partial_{D}\Om$}\}\,, \label{e.3}\\[1mm]
&& \displaystyle z^{k}_{i} = \argmin \, \Big\{ \F(u^{k}_{i}, z) + \tfrac{\delta}{2\tau_{k}} \| z - z^{k}_{i-1} \|_{2}^{2} : \, z\in H^{1}(\Om),\, z\leq z^{k}_{i-1} \Big\}\,. \label{e.4}
\end{eqnarray}

In the following two propositions we prove the boundedness of $u^{k}_{i}$ and~$z^{k}_{i}$ together with a discrete energy inequality.

\begin{proposition}\label{p.2}
For every $k\in\mathbb{N}$ and every $i\in\{1,\ldots,k\}$ it holds
\begin{eqnarray}
&\displaystyle |\partial^-_z\F|(u^{k}_{i}, z^{k}_{i}) =  \tfrac{\delta}{\tau_k} \| z^{k}_{i} - z^{k}_{i-1 }\|_{2}  \,,\label{e.5}\\
&\displaystyle \partial_{z} \F ( u^{k}_{i} , z^{k}_{i} ) [ z^{k}_{i} - z^{k}_{i-1} ] = - | \partial^-_z \F | ( u^{k}_{i} , z^{k}_{i} ) \| z^{k}_{i} - z^{k}_{i-1} \|_{2}\,.\label{e.6}
\end{eqnarray}
\end{proposition}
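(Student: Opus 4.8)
The plan is to derive both identities from the Euler--Lagrange characterization of the minimizer $z^k_i$ of the penalized functional in~\eqref{e.4}. Since $z^k_i$ minimizes $z \mapsto \F(u^k_i, z) + \tfrac{\delta}{2\tau_k}\|z - z^k_{i-1}\|_2^2$ over the convex set $\{z \in H^1(\Om) : z \le z^k_{i-1}\}$, the first-order optimality condition reads
\begin{equation*}
\partial_z \F(u^k_i, z^k_i)[\varphi] + \tfrac{\delta}{\tau_k}\int_\Om (z^k_i - z^k_{i-1})\,\varphi\,\di x \ge 0
\qquad \text{for all admissible variations } \varphi.
\end{equation*}
The admissible variations at $z^k_i$ are exactly the $\varphi \in H^1(\Om)\cap L^\infty(\Om)$ of the form $\varphi = z - z^k_i$ with $z \le z^k_{i-1}$; in particular $\varphi \le 0$ is always admissible (take $z = z^k_i + \varphi \le z^k_i \le z^k_{i-1}$), while $\varphi = z^k_{i-1} - z^k_i \ge 0$ is also admissible. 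First I would plug in $\varphi = z^k_{i-1} - z^k_i$ to get one inequality, and then exploit that $z^k_i - z^k_{i-1}$ itself is a feasible direction (so $-(z^k_i - z^k_{i-1})$ is too, or rather: the segment from $z^k_i$ towards $z^k_{i-1}$ stays admissible, but going past $z^k_i$ away from $z^k_{i-1}$ is only admissible if it keeps $z\le z^k_{i-1}$) to pin down~\eqref{e.6} as an \emph{equality}. Concretely, writing $w := z^k_{i-1} - z^k_i \ge 0$, feasibility of $z^k_i + sw$ for $s\in[0,1]$ together with the convexity/minimality gives $\bigl(\partial_z\F(u^k_i,z^k_i) + \tfrac{\delta}{\tau_k}(z^k_i - z^k_{i-1})\bigr)[w] \ge 0$, i.e. $\partial_z\F(u^k_i,z^k_i)[w] \ge \tfrac{\delta}{\tau_k}\|w\|_2^2 \ge 0$; on the other hand $z^k_i - sw$ for small $s>0$ stays $\le z^k_{i-1}$ as well (since $z^k_i \le z^k_{i-1}$ strictly would need care, but moving \emph{away} from $z^k_{i-1}$ keeps the constraint), giving the reverse inequality $\partial_z\F(u^k_i,z^k_i)[-w] + \tfrac{\delta}{\tau_k}(z^k_i - z^k_{i-1})[-w] \ge 0$. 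Adding the two yields $\partial_z\F(u^k_i,z^k_i)[w] = \tfrac{\delta}{\tau_k}\|w\|_2^2$, which is precisely~\eqref{e.6} after multiplying by $-1$ and using $z^k_i - z^k_{i-1} = -w$.

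Next, for~\eqref{e.5} I would use Lemma~\ref{SlopeLemma}, which says $|\partial_z^- \F|(u^k_i, z^k_i) = \sup\{-\partial_z\F(u^k_i,z^k_i)[\varphi] : \varphi \le 0,\ \|\varphi\|_{L^2}\le 1\}$. The optimality condition, applied to $\varphi = z - z^k_i$ with $z \le z^k_{i-1}$ and then to general $\varphi \le 0$ (which is always feasible as noted), gives $-\partial_z\F(u^k_i,z^k_i)[\varphi] \le \tfrac{\delta}{\tau_k}\int_\Om (z^k_i - z^k_{i-1})\varphi$; since $z^k_i \le z^k_{i-1}$ and $\varphi \le 0$ the integrand is $\ge 0$... wait, sign check: $z^k_i - z^k_{i-1} \le 0$ and $\varphi \le 0$ so the product is $\ge 0$, hence $-\partial_z\F[\varphi] \le \tfrac{\delta}{\tau_k}\|z^k_i - z^k_{i-1}\|_2\|\varphi\|_2 \le \tfrac{\delta}{\tau_k}\|z^k_i - z^k_{i-1}\|_2$ by Cauchy--Schwarz, giving $|\partial_z^-\F|(u^k_i,z^k_i) \le \tfrac{\delta}{\tau_k}\|z^k_i - z^k_{i-1}\|_2$. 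For the matching lower bound, I would test the supremum in Lemma~\ref{SlopeLemma} with the specific normalized direction $\varphi_* := (z^k_i - z^k_{i-1})/\|z^k_i - z^k_{i-1}\|_2$ (legitimate since $\varphi_* \le 0$ and $\|\varphi_*\|_{L^2} = 1$, assuming $z^k_i \ne z^k_{i-1}$; the degenerate case $z^k_i = z^k_{i-1}$ is handled separately and trivially gives $0 = 0$): by~\eqref{e.6}, $-\partial_z\F(u^k_i,z^k_i)[\varphi_*] = |\partial_z^-\F|(u^k_i,z^k_i)\|z^k_i-z^k_{i-1}\|_2 / \|z^k_i - z^k_{i-1}\|_2$... this is circular as stated, so instead I would use the equality form of optimality along the feasible segment towards $z^k_{i-1}$ directly: the equality $\partial_z\F(u^k_i,z^k_i)[w] = \tfrac{\delta}{\tau_k}\|w\|_2^2$ from the previous paragraph gives $-\partial_z\F(u^k_i,z^k_i)[\varphi_*] = -\partial_z\F(u^k_i,z^k_i)[-w/\|w\|_2] = \tfrac{\delta}{\tau_k}\|w\|_2 = \tfrac{\delta}{\tau_k}\|z^k_i - z^k_{i-1}\|_2$, whence $|\partial_z^-\F|(u^k_i,z^k_i) \ge \tfrac{\delta}{\tau_k}\|z^k_i - z^k_{i-1}\|_2$. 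Combining the two bounds gives~\eqref{e.5}, and then~\eqref{e.6} follows by substituting~\eqref{e.5} back into the equality just derived.

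The main obstacle, and the point requiring genuine care rather than routine work, is the justification that $z^k_i - z^k_{i-1}$ is a \emph{two-sided} admissible variation for the constrained minimization, so that the Euler--Lagrange relation holds with \emph{equality} (not just inequality) in that direction --- this is what produces~\eqref{e.6}. Moving in the direction $w = z^k_{i-1} - z^k_i$ (towards the obstacle) is admissible for $s\in[0,1]$ by convexity of the constraint set; moving in the opposite direction $-w$ requires only that $z^k_i - sw \le z^k_{i-1}$, i.e. $-sw \le w$, i.e. $(1+s)w \ge 0$, which holds for all $s \ge 0$ since $w \ge 0$. Thus both $\varphi = \pm w$ are feasible directions and the variational inequality becomes an equality along $w$; this is the crux. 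One must also be slightly careful about membership in $L^\infty$ (needed so that $\partial_z\F$ is well-defined on the variation, cf.~\eqref{derF}): since $0 \le z^k_i, z^k_{i-1} \le 1$ by Proposition~\ref{p.convj}, we have $w \in L^\infty(\Om)$ with $0 \le w \le 1$, so this is automatic. With these two points settled, the rest is a direct application of Lemma~\ref{SlopeLemma} and Cauchy--Schwarz as sketched above.
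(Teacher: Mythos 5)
Correct, and essentially the same route the paper takes: the Euler--Lagrange characterization of the constrained minimizer $z^{k}_{i}$ from~\eqref{e.4} combined with Lemma~\ref{SlopeLemma}. Your treatment is in fact slightly more careful than the paper's written proof, which records the optimality condition as an \emph{equality} for every $\varphi \le 0$ (a claim that would force the constraint $z \le z^{k}_{i-1}$ to be inactive); what is actually available is the variational inequality for all $\varphi \le 0$ together with equality only along the two-sided feasible direction $w = z^{k}_{i-1} - z^{k}_{i}$, and your argument supplies exactly the feasibility check needed to get that equality and hence the lower bound on the slope.
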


\begin{proof}
By~\eqref{e.4} we have that
\begin{displaymath}
\partial_{z} \F (u^{k}_{i} , z^{k}_{i} ) [ \varphi ] + \tfrac{\delta}{\tau_{k}} \int_{\Om} ( z^{k}_{i} - z^{k}_{i-1}) \varphi \, \di x = 0\qquad\text{for every $\varphi\in H^{1}(\Om)\cap L^{\infty}(\Om)$ with $\varphi\leq 0$}\,.
\end{displaymath}
Then, by Lemma~\ref{SlopeLemma} and by the density of~$H^{1}(\Om)\cap L^{\infty}(\Om)$ in~$L^2(\Om)$, we get that
\begin{displaymath}
\begin{split}
|\partial^-_z \F | ( u^{k}_{i} , z^{k}_{i} ) & = \vphantom{\int} \sup \, \{ - \partial_{z} \F ( u^{k}_{i} , z^{k}_{i}) [ \varphi ] : \,\varphi\in H^{1}(\Om)\cap L^{\infty}(\Om) , \, \varphi \leq 0 , \, \| \varphi \|_{2} \leq 1 \} 
\\
&
= \max \, \Big \{ \tfrac{\delta}{\tau_{k}} \int_{\Om} (z^{k}_{i} - z^{k}_{i-1} ) \varphi \, \di x : \, \varphi \in L^{2}(\Om),\,\varphi \leq 0, \, \| \varphi \|_{2} \leq 1 \Big \}
\\
&
= \tfrac{\delta}{\tau_{k}} \int_{\Om} (z^{k}_{i} - z^{k}_{i-1}) \frac{( z^{k}_{i} - z^{k}_{i-1})}{ \| z^{k}_{i} - z^{k}_{i-1} \|_{2}} \, \di x
= \tfrac{\delta}{\tau_{k}} \| z^{k}_{i} - z^{k}_{i-1} \|_{2} \,,
\end{split}
\end{displaymath}
which yields~\eqref{e.5} and~\eqref{e.6}.
\end{proof}

We now define the following interpolation functions:
\begin{eqnarray}
&&\displaystyle z^{\delta}_{k}(t)\coloneq z^{k}_{i} + \frac{z^{k}_{i+1} - z^{k}_{i}}{\tau_{k}} ( t - t^{k}_{i} )\qquad\text{for every $t \in [ t^{k}_{i} , t^{k}_{i+1} )$}\,,\label{int1}\\
&&\displaystyle \bar{u}^{\delta}_{k} (t) \coloneq u^{k}_{i} \,, \quad \bar{z}^{\delta}_{k} (t) \coloneq z^{k}_{i} \,,\quad t_{k} (t) \coloneq t^{k}_{i} \qquad\text{for every $t \in ( t^{k}_{i-1} , t^{k}_{i} ]$ } \,, \label{int2} \\ [1mm]
&&\displaystyle \ubar{u}^{\delta}_{k} (t) \coloneq u^{k}_{i} \,, \quad \ubar{z}^{\delta}_{k} (t) \coloneq v^{k}_{i} \,,\qquad\text{for every $t \in [ t^{k}_{i} , t^{k}_{i+1} )$ } \,. \label{int3}
\end{eqnarray}
 
We notice that at this point, arguing as in~\cite{MR3945577}, we could already show the convergence of the sequence $(\bar u^{\delta}_{k}, z^{\delta}_{k})$ to a viscous evolution~$(u_{\delta}, z_{\delta})$, without passing through a priori estimates showing time regularity of~$z^{\delta}_{k}$. The most delicate point would indeed be the energy balance~(d) of Definition~\ref{d.Evolution}, which would anyway follow from convexity and Riemann sum arguments as in~\cite{MR3945577}. However, such an analysis would preclude the study of the limit~$\delta\to 0$.

Therefore, in the next two lemmas we provide some a priori estimates for the discrete evolutions defined in~\eqref{int1}-\eqref{int3}. Precisely, we show in Lemma~\ref{l.H1H1} that~$z^{\delta}_{k}$ is bounded in $H^{1}([0,T]; H^{1}(\Om))$ w.r.t.~$k$. Since the bound is not uniform in~$\delta>0$, we further prove in Lemma~\ref{l.length} that the length of the curve $t\mapsto z^{\delta}_{k}(t)$, namely 
\begin{displaymath}
\int_{0}^{T} \| \dot{z}^{\delta}_{k} (\tau)\|_{H^{1}} \, \di \tau
\end{displaymath}
is bounded uniformly w.r.t.~$k$ and~$\delta$. This will allow us to consider the limit~$\delta\to 0$ in Theorem~\ref{t.vanevolution}.


\begin{lemma}\label{l.H1H1}
There exists a positive constant $C$ independent of $\delta$ and~$k$ such that for every $t\in [0,T]$
\begin{eqnarray}
&&\displaystyle \delta \| \dot{z}^{\delta}_{k} (t) \|_{2} \leq C e^{\frac{C}{\delta} t_{k}(t)} \,, \label{e.H11} \\ [2mm]
&&\displaystyle \delta \int_{0}^{t_{k}(t)} \| \dot{z}_{k}^{\delta} (\tau) \|_{H^{1}}^{2} \, \di \tau \leq C e^{\frac{C}{\delta}t_{k}(t)}\,. \label{e.H12}
\end{eqnarray}
\end{lemma}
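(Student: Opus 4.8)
The plan is to derive a discrete differential inequality for the increments $z^k_i - z^k_{i-1}$ and to close it by a discrete Gronwall argument (the exponential factor $e^{Ct/\delta}$ being precisely the price of the viscous penalization). The starting point is the Euler--Lagrange variational inequality associated with~\eqref{e.4}: for every $z\in H^1(\Om)$ with $z\le z^k_{i-1}$,
\begin{displaymath}
\partial_z \F(u^k_i,z^k_i)[z-z^k_i] + \tfrac{\delta}{\tau_k}\,\scprod{z^k_i-z^k_{i-1}}{z-z^k_i}_{L^2} \ge 0\,.
\end{displaymath}
By Proposition~\ref{p.convj} and~\eqref{e.4} we have $z^k_i\in H^1(\Om;[0,1])$ and $z^k_i\le z^k_{i-1}\le z^k_{i-2}$, so $z^k_{i-1}$ is admissible in the $i$-th inequality and $z^k_i$ in the $(i-1)$-th one. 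Subtracting the two (for $i=1$ one uses~\eqref{equilibriumz0} in place of the step-$0$ inequality, which amounts to setting $z^k_{-1}:=z^k_0$) yields
\begin{displaymath}
\bigl(\partial_z\F(u^k_i,z^k_i)-\partial_z\F(u^k_{i-1},z^k_{i-1})\bigr)[z^k_i-z^k_{i-1}] + \tfrac{\delta}{\tau_k}\,\|z^k_i-z^k_{i-1}\|_2^2 \le \tfrac{\delta}{\tau_k}\,\scprod{z^k_{i-1}-z^k_{i-2}}{z^k_i-z^k_{i-1}}_{L^2}\,.
\end{displaymath}

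Next I would bound the monotonicity term from below. Splitting $\partial_z\F$ as in~\eqref{derF}, the Dirichlet part gives $\|\nabla(z^k_i-z^k_{i-1})\|_2^2$; the $f'$-part gives at least $\alpha\|z^k_i-z^k_{i-1}\|_2^2$ by strong convexity of $f$; and, setting $W_1(u):=\mu|\strain_{d}(u)|^2+\kappa|\strain_{v}^{+}(u)|^2\ge 0$ and decomposing $h'(z^k_i)W_1(u^k_i)-h'(z^k_{i-1})W_1(u^k_{i-1}) = \bigl(h'(z^k_i)-h'(z^k_{i-1})\bigr)W_1(u^k_i) + h'(z^k_{i-1})\bigl(W_1(u^k_i)-W_1(u^k_{i-1})\bigr)$, the first summand is nonnegative (monotonicity of $h'$, i.e.\ convexity of $h$, together with $W_1\ge 0$) and the second is controlled by $\|h'\|_{L^\infty([0,1])}\int_\Om |W_1(u^k_i)-W_1(u^k_{i-1})|\,|z^k_i-z^k_{i-1}|\,\di x$. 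Absorbing, via Cauchy--Schwarz, half of the penalization from the right-hand cross term, one reaches
\begin{displaymath}
c_0\,\|z^k_i-z^k_{i-1}\|_{H^1}^2 + \tfrac{\delta}{2\tau_k}\,\|z^k_i-z^k_{i-1}\|_2^2 \le \tfrac{\delta}{2\tau_k}\,\|z^k_{i-1}-z^k_{i-2}\|_2^2 + C\int_\Om |W_1(u^k_i)-W_1(u^k_{i-1})|\,|z^k_i-z^k_{i-1}|\,\di x\,,
\end{displaymath}
with $c_0=\min\{1,\alpha\}>0$.

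The heart of the proof --- and the step I expect to be the main obstacle --- is the estimate of the last integral, which is where dimension two enters in an essential way. Using $|W_1(u^k_i)-W_1(u^k_{i-1})|\le C\,|\strain(u^k_i)-\strain(u^k_{i-1})|\,\bigl(|\strain(u^k_i)|+|\strain(u^k_{i-1})|\bigr)$, Hölder's inequality with exponents $(r,r,q)$, $\tfrac2r+\tfrac1q=1$ (so $q<\infty$ since $r>2$), the uniform $W^{1,r}$-bound on the elastic minimizers, and the Lipschitz estimate of Lemma~\ref{l.regLp}, namely $\|\strain(u^k_i)-\strain(u^k_{i-1})\|_{L^r}\le C\bigl(\|g(t^k_i)-g(t^k_{i-1})\|_{W^{1,r}}+\|z^k_i-z^k_{i-1}\|_{L^\ell}\bigr)$ with $1/\ell=1/r-1/p$, the integral is bounded by $C\bigl(\tau_k^{1/2}\omega_i+\|z^k_i-z^k_{i-1}\|_{L^\ell}\bigr)\|z^k_i-z^k_{i-1}\|_{L^q}$, where $\omega_i^2:=\int_{t^k_{i-1}}^{t^k_i}\|\dot g\|_{W^{1,p}}^2\,\di\tau$ (so $\sum_i\omega_i^2=\|\dot g\|_{L^2(0,T;W^{1,p})}^2$) and I used $\|g(t^k_i)-g(t^k_{i-1})\|_{W^{1,r}}\le\tau_k^{1/2}\omega_i$. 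Since $n=2$ and $0\le z^k_{i-1}-z^k_i\le 1$, the Gagliardo--Nirenberg inequalities give $\|z^k_i-z^k_{i-1}\|_{L^m}\le C\,\|z^k_i-z^k_{i-1}\|_2^{2/m}\,\|z^k_i-z^k_{i-1}\|_{H^1}^{1-2/m}$ for $m\in\{\ell,q\}$, and then Young's inequality splits the integral into $\eta\,\|z^k_i-z^k_{i-1}\|_{H^1}^2$ (absorbable into $c_0\|z^k_i-z^k_{i-1}\|_{H^1}^2$ for $\eta$ small) plus $C_\eta\,\|z^k_i-z^k_{i-1}\|_2^2+C\tau_k\omega_i^2$. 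Writing $A_i:=\tfrac{\delta}{2\tau_k}\|z^k_i-z^k_{i-1}\|_2^2$ and rewriting $C_\eta\|z^k_i-z^k_{i-1}\|_2^2=\tfrac{2C_\eta\tau_k}{\delta}A_i$, for $k$ large enough that $\tfrac{2C_\eta\tau_k}{\delta}\le\tfrac12$ one obtains the recursion
\begin{displaymath}
A_i+\tfrac{c_0}{2}\,\|z^k_i-z^k_{i-1}\|_{H^1}^2 \le \Bigl(1+\tfrac{C\tau_k}{\delta}\Bigr)A_{i-1}+C\tau_k\omega_i^2\,,\qquad A_0=0\,.
\end{displaymath}

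It then remains to iterate. The discrete Gronwall inequality gives $A_i\le\bigl(1+\tfrac{C\tau_k}{\delta}\bigr)^iC\tau_k\sum_{m=1}^i\omega_m^2\le C\tau_k\,e^{Ct^k_i/\delta}\,\|\dot g\|_{L^2(0,T;W^{1,p})}^2$, using $(1+x)^i\le e^{ix}$ and $i\tau_k=t^k_i$; hence, on $(t^k_i,t^k_{i+1})$, $\delta\|\dot z^\delta_k(t)\|_2=\tfrac{\delta}{\tau_k}\|z^k_{i+1}-z^k_i\|_2=(2\delta A_{i+1}/\tau_k)^{1/2}\le C\delta^{1/2}e^{Ct^k_i/(2\delta)}$, which (recalling $t_k(t)=t^k_{i+1}$ there and, without loss of generality, $\delta\le 1$) is~\eqref{e.H11} after adjusting constants. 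For~\eqref{e.H12}, summing the $\|z^k_i-z^k_{i-1}\|_{H^1}^2$-terms in the recursion makes the $A_i$'s telescope, and with $\sum_{m\le i}A_m\le t^k_i\,C\,e^{Ct^k_i/\delta}\|\dot g\|_{L^2}^2$ one gets $\tfrac{c_0}{2}\sum_{m=1}^i\|z^k_m-z^k_{m-1}\|_{H^1}^2\le\tfrac{C\tau_k}{\delta}\,e^{Ct^k_i/\delta}$; dividing by $\tau_k$ and using $\int_0^{t^k_i}\|\dot z^\delta_k\|_{H^1}^2\,\di\tau=\tfrac1{\tau_k}\sum_{m=1}^i\|z^k_m-z^k_{m-1}\|_{H^1}^2$ yields~\eqref{e.H12}. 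The finitely many small values of $k$ excluded by the ``$k$ large'' restriction are treated directly, feeding the crude bound $\|z^k_i-z^k_{i-1}\|_2\le 2|\Om|^{1/2}$ into the same recursion. The Gagliardo--Nirenberg and Young constants, the precise value of $c_0$, and the choice of $\eta$ are routine.
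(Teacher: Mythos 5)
Your argument is essentially the paper's proof: you subtract the Euler--Lagrange variational inequalities at consecutive time steps (which is precisely how the paper derives \eqref{e.13}, using \eqref{e.11} at $t^{k}_{i-1}$ tested with $\dot z_i\le 0$ together with the equality \eqref{e.12}), exploit strong convexity of $f$ for $H^1$-coercivity and convexity of $h$ for the sign, estimate the elastic cross-term via Lemma~\ref{l.regLp}, H\"older, and Gagliardo--Nirenberg/Sobolev in dimension two, and close with a discrete Gronwall iteration. The only cosmetic deviation is that you keep the increments $z^k_i-z^k_{i-1}$ and a discrete Gronwall with multiplicative factor $(1+C\tau_k/\delta)$ (hence the ``$k$ large'' caveat and separate treatment of small $k$), whereas the paper works with the difference quotients $\dot z_i$, sums the one-step inequality \eqref{e.H13}, and applies the continuous Gronwall inequality to the piecewise-affine interpolant, which avoids that case distinction.
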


\begin{proof}
We follow here the lines of~\cite[Propositions~4.6 and~5.7]{MR3021776} and~\cite[Proposition~2.8]{MR3454016}. For $\delta$ and $k$ fixed, for simplicity of notation we set $\dot{z}_{i} \coloneq \tfrac{z^{k}_{i} - z^{k}_{i-1}}{\tau_{k}}$ and $\dot{g}_{i} \coloneq \tfrac{ g(t^{k}_{i}) - g(t^{k}_{i-1}) }{\tau_{k}}$, $i=1, \ldots, k$.

In view of~\eqref{e.4} we have that
\begin{eqnarray}
&& \displaystyle \partial_{z} \F(u^{k}_{i}, z^{k}_{i})[\varphi] + \tfrac{\delta}{\tau_{k}} \int_{\Om} (z^{k}_{i} - z^{k}_{i-1}) \varphi\, \di x \geq 0\qquad\text{for every $\varphi\in H^{1}(\Om)$, $\varphi \leq 0$}\,.\label{e.11} \\[2mm]
&& \displaystyle \partial_{z} \F(u^{k}_{i}, z^{k}_{i})[\dot{z}_{i}] + \delta \|\dot{z}_{i}\|_{2}^{2} = 0\,. \label{e.12}
\end{eqnarray}
Let us consider $i\geq 2$. Inserting~$\dot{z}_{i}$ as a test function in~\eqref{e.11} at time~$t^{k}_{i-1}$ we have that
\begin{displaymath}
\partial_{z} \F ( u^{k}_{i-1}, z^{k}_{i-1} )  [ \dot{z}_{i} ] + \delta \int_{\Om} \dot{z}_{i-1} \dot{z}_{i}\, \di x \geq 0\,.
\end{displaymath}
Subtracting~\eqref{e.12} from the previous inequality we get
\begin{equation}\label{e.13}
 \partial_{z} \F ( u^{k}_{i-1}, z^{k}_{i-1} )  [ \dot{z}_{i} ] - \partial_{z} \F(u^{k}_{i}, z^{k}_{i})[\dot{z}_{i}] \geq \delta \int_{\Om} ( \dot{z}_{i} - \dot{z}_{i-1} ) \dot{z}_{i}\, \di x \geq \tfrac{\delta}{2}( \| \dot{z}_{i}\|_{2}^{2} - \| \dot{z}_{i-1} \|_{2}^{2} ) \,,
\end{equation}
where, in the last step, we have used the inequality $2a(a-b)\geq a^{2} - b^{2}$.

We now estimate the left-hand side of~\eqref{e.13}. First, we split the difference using the definition~\eqref{totalenergy} of~$\F$, so that
\begin{equation}\label{e.14}
\begin{split}
 \partial_{z} \F ( u^{k}_{i-1}, z^{k}_{i-1} )  [ \dot{z}_{i} ] - \partial_{z} \F(u^{k}_{i}, z^{k}_{i})[\dot{z}_{i}] = & \ \partial_{z} \E(u^{k}_{i-1}, z^{k}_{i-1}) [\dot{z}_{i}]  - \partial_{z} \E(u^{k}_{i}, z^{k}_{i}) [\dot{z}_{i}] 
  \\
 &
 + \partial_{z} \D (z^{k}_{i-1})[\dot{z}_{i}] - \partial_{z}\D( z^{k}_{i}) [\dot{z}_{i}]\,.
\end{split}
\end{equation}
Since $f$ is strongly convex, there exists~$c \in (0, 1)$ such that
\begin{equation}\label{e.15}
\partial_{z} \D (z^{k}_{i-1})[\dot{z}_{i}] - \partial_{z}\D( z^{k}_{i}) [\dot{z}_{i}] = \int_{\Om} \nabla{( z^{k}_{i-1} - z^{k}_{i})} \nabla{\dot{z}_{i}} \, \di x + \int_{\Om} (f'(z^{k}_{i-1}) -f'( z^{k}_{i}) ) \dot{z}_{i}\, \di x \leq - c \tau_{k} \|\dot{z}_{i} \|^{2}_{H^{1}}\,.
\end{equation}
As for the first two terms on the right-hand side of~\eqref{e.14}, we write
\begin{equation}\label{e.16}
\begin{split}
\partial_{z} \E & (u^{k}_{i-1}, z^{k}_{i-1})  [\dot{z}_{i}]  - \partial_{z} \E(u^{k}_{i}, z^{k}_{i}) [\dot{z}_{i}] 
\\
&
= \partial_{z} \E(u^{k}_{i-1}, z^{k}_{i-1})  [\dot{z}_{i}]  - \partial_{z} \E(u^{k}_{i}, z^{k}_{i-1})  [\dot{z}_{i}]  +  \partial_{z} \E(u^{k}_{i}, z^{k}_{i-1})  [\dot{z}_{i}]  - \partial_{z} \E(u^{k}_{i}, z^{k}_{i}) [\dot{z}_{i}]
\\
&
= \int_{\Om} \dot{z}_{i} h'(z^{k}_{i-1}) \big(\mu|\strain_{d} (u^{k}_{i-1})|^{2} + \kappa | \strain_{v}^{+}(u^{k}_{i-1})|^{2} - \mu|\strain_{d} (u^{k}_{i})|^{2} - \kappa | \strain_{v}^{+}(u^{k}_{i})|^{2} \big) \,\di x 
\\
&
\qquad + \int_{\Om} \dot{z}_{i} ( h'(z^{k}_{i-1}) - h'(z^{k}_{i}) ) \big( \mu|\strain_{d} (u^{k}_{i})|^{2} + \kappa | \strain_{v}^{+}(u^{k}_{i})|^{2} \big)  \,\di x
 \\
&
\leq \int_{\Om} \dot{z}_{i} h'(z^{k}_{i-1}) \mu \big( |\strain_{d} (u^{k}_{i-1})| + |\strain_{d} (u^{k}_{i})| \big) \big( |\strain_{d} (u^{k}_{i-1})| - |\strain_{d} (u^{k}_{i})| \big) \,\di x  
\\
&
\qquad +\int_{\Om} \dot{z}_{i} h'(z^{k}_{i-1})\kappa \big( | \strain_{v}^{+}(u^{k}_{i-1})| +  | \strain_{v}^{+}(u^{k}_{i})| \big) \big( | \strain_{v}^{+}(u^{k}_{i-1})| -  | \strain_{v}^{+}(u^{k}_{i})| \big) \,\di x \,,
\end{split}
\end{equation}
where, in the last inequality, we have used the convexity of~$h$, which yields  $\dot{z}_{i} (h'(z^{k}_{i-1}) - h'(z^{k}_{i})) \leq 0$. Let $r,\ell \in(2,+\infty)$ be as in Lemma~\ref{l.regLp}. Since $0\leq z^{k}_{i} \leq 1$, by H\"older inequality we continue in~\eqref{e.16} with
\begin{equation}\label{e.17}
\begin{split}
\partial_{z} \E & (u^{k}_{i-1}, z^{k}_{i-1})  [\dot{z}_{i}]  - \partial_{z} \E(u^{k}_{i}, z^{k}_{i}) [\dot{z}_{i}] 
\leq \|\dot{z}_{i}\|_{\nu} \| u^{k}_{i} - u^{k}_{i-1} \|_{W^{1,r}} ( \| u^{k}_{i}\|_{W^{1,r}} + \| u^{k}_{i-1}\|_{W^{1,r}}) \,,
\end{split}
\end{equation}
for $\tfrac{1}{\nu} + \tfrac{2}{r} = 1$. By Lemma~\ref{l.regLp} we know that $\|u^{k}_{i}\|_{W^{1,r}}$ is bounded uniformly w.r.t.~$k$,~$i$, and~$\delta$. Furthermore, we deduce from~\eqref{e.17} that
\begin{equation}\label{e.18}
\begin{split}
\partial_{z} \E  (u^{k}_{i-1}, z^{k}_{i-1})  [\dot{z}_{i}]  - \partial_{z} \E(u^{k}_{i}, z^{k}_{i}) [\dot{z}_{i}] 
& \leq C \|\dot{z}_{i}\|_{\nu} \big(\| g(t^{k}_{i}) - g(t^{k}_{i-1}) \|_{W^{1,p}} + \| z^{k}_{i} - z^{k}_{i-1} \|_{\ell} \big)
\\
&
= C \tau_{k}  \|\dot{z}_{i}\|_{\nu}  \big(\| \dot{g}_{i} \|_{W^{1,p}} + \| \dot{z}_{i} \|_{\ell} \big)
\end{split}
\end{equation}
for some positive constant~$C$ independent of~$k$,~$i$, and~$\delta$. Setting~$\lambda\coloneq \max\{ \nu, \ell\}$ and applying Young inequality to~\eqref{e.18} we infer that
\begin{displaymath}
\partial_{z} \E  (u^{k}_{i-1}, z^{k}_{i-1})  [\dot{z}_{i}]  - \partial_{z} \E(u^{k}_{i}, z^{k}_{i}) [\dot{z}_{i}] 
\leq C_{1} \tau_{k}  \|\dot{z}_{i}\|_{\lambda}^{2} + C_{2} \tau_{k} \| \dot{g}_{i} \|^{2}_{W^{1,p}}\,.
\end{displaymath}
Finally, by interpolation and Sobolev inequality we obtain
\begin{equation}\label{e.19}
\begin{split}
\partial_{z} \E  (u^{k}_{i-1}, z^{k}_{i-1})  [\dot{z}_{i}]  - \partial_{z} \E(u^{k}_{i}, z^{k}_{i}) [\dot{z}_{i}] & \leq  \eps \tau_{k} \| \dot{z}_{i}\|_{H^{1}}^{2} + C_{\eps} \tau_{k} \|\dot{z}_{i}\|_{2}^{2} + C_2\tau_{k} \| \dot{g}_{i} \|^{2}_{W^{1,p}}
\end{split}
\end{equation}
for $\eps, C_{\eps} >0$. Choosing $\eps = \tfrac{c}{2}$ in~\eqref{e.19} and combining \eqref{e.13},~\eqref{e.15}, and~\eqref{e.19} we deduce that
\begin{equation}\label{e.H13}
 \tfrac{\delta}{2}( \| \dot{z}_{i}\|_{2}^{2} - \| \dot{z}_{i-1} \|_{2}^{2} ) +\tfrac{c \tau_{k}}{2} \|\dot{z}_{i}\|_{H^{1}}^{2} \leq C \tau_{k} ( \|\dot{z}_{i}\|_{2}^{2} + \| \dot{g}_{i} \|^{2}_{W^{1,p}})
\end{equation}
for some $C>0$ independent of $k$ and $\delta$.

For $i=1$ we consider the conditions
\begin{eqnarray*}
&&\displaystyle \partial_{z} \F (u_{0}, z_{0}) [\varphi]\geq 0 \qquad \text{for every $\varphi \in H^{1}(\Om)$, $\varphi \leq 0$}\,. \\[2mm]
&&\displaystyle \partial_{z} \F(u_{1}^{k}, z_{1}^{k})[ \dot{z}_{1} ] + \delta \| \dot{z}_{1} \|_{2}^{2} = 0\,.
\end{eqnarray*}
Testing the first inequality with~$\varphi = \dot{z}_{1} \leq 0$ and subtracting the second equality we deduce
\begin{displaymath}
\delta \| \dot{z}_{1}\|_{2}^{2} \leq  \partial_{z} \F (u_{0}, z_{0}) [ \dot{z}_{1} ] - \partial_{z} \F(u_{1}^{k}, z_{1}^{k})[ \dot{z}_{1} ] \,.
\end{displaymath}
Arguing as in~\eqref{e.14}--\eqref{e.18} we get
\begin{equation}\label{e.22}
\delta \| \dot{z}_{1} \|_{2}^{2} + c \tau_{k} \| \dot{z}_{1} \|^{2}_{H^{1}} \leq C \tau_{k} ( \| \dot{z}_{1} \|_{2} + \| \dot{g}_{1} \|_{W^{1,p}} )^{2}\,.
\end{equation}

Let us now fix $t\in (0, T]$ and let $\bar{i}\in \{2, \ldots, k\}$ be such that $t\in [t^{k}_{\bar{i}-1} , t^{k}_{\bar{i}})$. Summing up \eqref{e.H13} for $i=2,\ldots, \bar{i}$ and~\eqref{e.22} and dividing by~$c\in(0,1)$ we get that
\begin{equation}\label{e.H14}
\begin{split}
\delta \| \dot{z}_{k}^{\delta} (t) \|_{2}^{2} + \tau_{k} \sum_{i=1}^{\bar{i}} \| \dot{z}_{i} \|_{H^{1}}^{2} & \leq C \int_{0}^{t_{k}(t)} \| \dot{z}^{\delta}_{k} (\tau) \|_{2}^{2} \,\di \tau + C\int_{0}^{t_{k}(t)} \| \dot{g} (\tau) \|_{W^{1,p}}^{2}\, \di \tau
\\
&
\leq C \Big( 1 +  \int_{0}^{t_{k}(t)} \| \dot{z}^{\delta}_{k} (\tau) \|_{2}^{2} \,\di \tau \Big)  \,.
\end{split}
\end{equation}
Hence, Gronwall inequality implies that
\begin{equation}\label{e.H15}
\delta \| \dot{z}_{k}^{\delta} (t) \|_{2}^{2} \leq C e^{\frac{C}{\delta} t_{k}(t)} \,.
\end{equation}
Inequality~\eqref{e.H11} follows from~\eqref{e.H15} simply by multiplying by~$\delta$ and taking the square root. Finally,~\eqref{e.H12} is a consequence of~\eqref{e.H11} and of~\eqref{e.H14}.
\end{proof}

\begin{lemma}\label{l.length}
There exists $C>0$ independent of~$k$ and $\delta$ such that
\begin{displaymath}
\int_{0}^{T} \| \dot{z}^{\delta}_{k} (\tau) \|_{H^{1}}\, \di \tau \leq C \,.
\end{displaymath}

\begin{proof}
We use here the same notation as in the proof of Lemma~\ref{l.H1H1}. Arguing as in~\eqref{e.13}, for $i\geq2$ we get that
\begin{equation}\label{e.13.1}
 \partial_{z} \F ( u^{k}_{i-1}, z^{k}_{i-1} )  [ \dot{z}_{i} ] - \partial_{z} \F(u^{k}_{i}, z^{k}_{i})[\dot{z}_{i}] \geq \delta \int_{\Om} ( \dot{z}_{i} - \dot{z}_{i-1} ) \dot{z}_{i}\, \di x \geq \delta \| \dot{z}_{i}\|_{2} ( \| \dot{z}_{i}\|_{2} - \| \dot{z}_{i-1} \|_{2} ) \,,
\end{equation}
where the last step is due to Cauchy and triangle inequality. In order to estimate the left-hand side of~\eqref{e.13.1} we proceed as in~\eqref{e.14}-\eqref{e.18}, obtaining
\begin{equation}\label{e.14.1}
\begin{split}
\delta  \| \dot{z}_{i}\|_{2} ( \| \dot{z}_{i}\|_{2} - \| \dot{z}_{i-1} \|_{2} ) + c \tau_{k} \| \dot{z}_{i} \|_{H^{1}}^{2} & \leq C \tau_{k} (  \|\dot{z}_{i}\|_{\lambda}^{2} + \| \dot{g}_{i} \|^{2}_{W^{1,p}}) 
\end{split}
\end{equation}
for some positive $c, C$ independent of~$i$,~$k$, and~$\delta$ and some $\lambda\in (2, +\infty)$. By interpolation and Sobolev inequality we deduce from~\eqref{e.14.1} that
\begin{equation}\label{e.19.1}
\begin{split}
\delta  \| \dot{z}_{i}\|_{2} ( \| \dot{z}_{i}\|_{2} - \| \dot{z}_{i-1} \|_{2} ) + c \tau_{k} \| \dot{z}_{i} \|_{H^{1}}^{2} & \leq  \eps \tau_{k} \| \dot{z}_{i}\|_{H^{1}}^{2} + C_{\eps} \tau_{k} \|\dot{z}_{i}\|_{1}^{2} + C\tau_{k} \| \dot{g}_{i} \|^{2}_{W^{1,p}}
\\
&
\leq \eps \tau_{k} \| \dot{z}_{i}\|_{H^{1}}^{2} + C_{\eps} \tau_{k} \|\dot{z}_{i}\|_{1} \|\dot{z}_{i}\|_{2} + C\tau_{k} \| \dot{g}_{i} \|^{2}_{W^{1,p}}
\end{split}
\end{equation}
for $\eps, C_{\eps} >0$. Rewriting~\eqref{e.19.1} for $\eps< c/2$ and using the inequality $\| \dot{z}_{i} \|_{2} \leq \| \dot{z}_{i} \|_{H^{1}}$ we get
\begin{equation}\label{e.20}
 \delta \| \dot{z}_{i}\|_{2} ( \| \dot{z}_{i}\|_{2} - \| \dot{z}_{i-1} \|_{2} ) + \frac{c \tau_{k}}{4} \| \dot{z}_{i} \|_{2}^{2} +\frac{c \tau_{k}}{4} \| \dot{z}_{i} \|_{H^{1}}^{2} \leq  C \tau_{k} \|\dot{z}_{i}\|_{1} \|\dot{z}_{i}\|_{2} + C\tau_{k} \| \dot{g}_{i} \|^{2}_{W^{1,p}} \,.
\end{equation} 
Multiplying~\eqref{e.20} by 2 and dividing by~$\delta$ we finally obtain an inequality of the form
\begin{displaymath}
2a_{i} (a_{i} - a_{i-1}) + 2\gamma a_{i}^2 + b_i^2 \leq c_i^2 + 2a_i d_i
\end{displaymath}
for $a_{i}\coloneq \| \dot{z}_{i} \|_{2}$, $b_{i} \coloneq \big (\tfrac{c \tau_{k}}{2\delta} \big)^{\frac{1}{2}}\| \dot{z}_{i}\|_{H^{1}}$, $c_{i}\coloneq \big (\frac{2C \tau_{k}}{\delta}\big)^{\frac{1}{2}} \| \dot{g}_{i}\|_{W^{1,p}}$, $d_{i} \coloneq \frac{C\tau_{k}}{\delta} \| \dot{z}_{i} \|_{1}$, and $\gamma\coloneq \frac{c \tau_{k}}{4\delta}$. Applying now the discrete Gronwall inequality of~\cite[Lemma~5.9]{MR3021776} and performing the computation contained in~\cite[Proposition~2.8]{MR3454016} we eventually deduce that
\begin{equation}\label{e.21}
\sum_{i=2}^{k} \tau_{k} \|\dot{z}_{i}\|_{H^{1}} \leq C \Big( T + \delta \| \dot{z}_{1} \|_{2} + \sum_{i=2}^{k}  \tau_k \| \dot{g}_{i} \|_{W^{1,p}} + \sum_{i=2}^{k} \tau_{k} \|\dot{z}_{i}\|_{1}\Big)\,.
\end{equation}

An estimate for~$\| \dot{z}_{1}\|_{H^{1}}$ follows directly from~\eqref{e.22} and an application of Sobolev and interpolation inequalities, so that
\begin{equation}\label{e.22.1}
 \tau_{k} \| \dot{z}_{1} \|_{H^{1}} \leq C \tau_{k} ( \| \dot{z}_{1} \|_{1} + \| \dot{g}_{1} \|_{W^{1,p}} ) \,.
\end{equation}
Hence, summing up~\eqref{e.21} and~\eqref{e.22.1} and applying H\"older inequality to $\dot{g}_{i}$ we obtain
\begin{equation}\label{e.24}
\begin{split}
\int_{0}^{T}  \| \dot{z}^{\delta}_{k} (\tau) \|_{H^{1}}\, \di s &  \leq C \Big( 1 + \delta \| \dot{z}_{1} \|_{2} + \int_{0}^{T} \| \dot{g} (\tau)\|_{W^{1,p}}^{2}\, \di \tau + \sum_{i=1}^{k} \int_{\Om} z^{k}_{i-1} - z^{k}_{i} \,\di x \Big) 
\\
&
= C \Big( 1 + \delta \| \dot{z}_{1} \|_{2} + \int_{0}^{T} \| \dot{g} (\tau)\|_{W^{1,p}}^{2}\, \di \tau + \int_{\Om} z_{0} - z^{k}_{k} \, \di x\Big)
\\
& 
\leq C \Big( 1 + \delta \| \dot{z}_{1} \|_{2} + \int_{0}^{T} \| \dot{g} (\tau)\|_{W^{1,p}}^{2}\, \di \tau \Big)\,.
\end{split}
\end{equation}

We conclude by observing that in view of~\eqref{e.H11} for $t= t^{k}_{1} = \tau_{k}$ we have that $\delta \| \dot{z}_{1} \|_{2} \leq C e^{\frac{C}{\delta} \tau_{k}}$ for some $C>0$ independent of~$k$ and~$\delta$. Hence, for $\tau_{k} \leq \delta$ we infer that $\delta  \| \dot{z}_{1} \|_{2}$ is bounded and, in view of~\eqref{e.24},
\begin{displaymath}
\int_{0}^{T}  \| \dot{z}^{\delta}_{k} (\tau) \|_{H^{1}}\, \di \tau  \leq C \Big( 1 + \int_{0}^{T} \| \dot{g} (\tau)\|_{W^{1,p}}^{2}\, \di \tau \Big)\,,
\end{displaymath}
which implies the thesis.
\end{proof}
\end{lemma}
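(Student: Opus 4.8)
The plan is to reduce the estimate to a discrete one for the increments $\dot{z}_i\coloneq(z^k_i-z^k_{i-1})/\tau_k$, since $\int_0^T\|\dot z^\delta_k(\tau)\|_{H^1}\,\di\tau=\sum_{i=1}^k\tau_k\|\dot z_i\|_{H^1}$, and to derive a discrete differential inequality of \emph{metric} (arc-length) type, analogous to the one underlying Lemma~\ref{l.H1H1} but arranged so that all constants stay independent of $\delta$. The $H^1$-in-$L^2$ bound of Lemma~\ref{l.H1H1} degenerates like $e^{C/\delta}$, so two modifications of that argument are needed: replacing a coercivity inequality of the form $2a(a-b)\ge a^2-b^2$ by the weaker but scale-correct $a(a-b)$, and replacing an $L^2$--$H^1$ interpolation by an $L^1$--$H^1$ interpolation so that the monotonicity of $z$ can be exploited.

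First I would use, as in Lemma~\ref{l.H1H1}, the minimality relation \eqref{e.4}: for every admissible $\varphi\le 0$ one has $\partial_z\F(u^k_i,z^k_i)[\varphi]+\tfrac{\delta}{\tau_k}\int_\Om(z^k_i-z^k_{i-1})\varphi\,\di x\ge 0$, and the choice $\varphi=\dot z_i\le 0$ gives the equality $\partial_z\F(u^k_i,z^k_i)[\dot z_i]+\delta\|\dot z_i\|_2^2=0$. Testing the inequality at step $i-1$ with $\dot z_i$ and subtracting the equality at step $i$ yields, for $i\ge2$,
\[
\partial_z\F(u^k_{i-1},z^k_{i-1})[\dot z_i]-\partial_z\F(u^k_i,z^k_i)[\dot z_i]\ \ge\ \delta\!\int_\Om(\dot z_i-\dot z_{i-1})\dot z_i\,\di x\ \ge\ \delta\|\dot z_i\|_2\bigl(\|\dot z_i\|_2-\|\dot z_{i-1}\|_2\bigr),
\]
where the last step is Cauchy--Schwarz and the triangle inequality. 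I would then estimate the left-hand side by splitting $\F=\E+\D$ exactly as in \eqref{e.14}--\eqref{e.19}: the dissipation gives a coercive term $-c\tau_k\|\dot z_i\|_{H^1}^2$ by strong convexity of $f$; in the elastic part the variation of $h'$ has the right sign by convexity of $h$, and the variation in $u$ is controlled by $\|\dot z_i\|_\nu\|u^k_i-u^k_{i-1}\|_{W^{1,r}}(\|u^k_i\|_{W^{1,r}}+\|u^k_{i-1}\|_{W^{1,r}})$, which by the regularity Lemma~\ref{l.regLp} and Young's inequality becomes $\le C\tau_k\|\dot z_i\|_\lambda^2+C\tau_k\|\dot g_i\|_{W^{1,p}}^2$ with $\lambda=\max\{\nu,\ell\}>2$.

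The decisive point is the interpolation of $\|\dot z_i\|_\lambda$: I would interpolate between $L^1$ and $H^1$ rather than between $L^2$ and $H^1$, so that, after absorbing the $H^1$ term into the dissipation, one is left with a term $C\tau_k\|\dot z_i\|_1\|\dot z_i\|_2$ on the right. This is harmless because $\dot z_i\le 0$ pointwise, hence $\tau_k\|\dot z_i\|_1=\int_\Om(z^k_{i-1}-z^k_i)\,\di x$ telescopes: $\sum_i\tau_k\|\dot z_i\|_1=\int_\Om(z_0-z^k_k)\,\di x\le|\Om|$ since $0\le z^k_i\le 1$. Writing the resulting inequality, after multiplication by $2/\delta$, in the normalized form $2a_i(a_i-a_{i-1})+2\gamma a_i^2+b_i^2\le c_i^2+2a_id_i$ with $a_i=\|\dot z_i\|_2$ and $b_i,c_i,d_i,\gamma$ proportional to suitable powers of $\tau_k/\delta$ times $\|\dot z_i\|_{H^1}$, $\|\dot g_i\|_{W^{1,p}}$, $\|\dot z_i\|_1$, I would apply the discrete Gronwall lemma \cite[Lemma~5.9]{MR3021776} and the summation computation of \cite[Proposition~2.8]{MR3454016} to obtain $\sum_{i=2}^k\tau_k\|\dot z_i\|_{H^1}\le C(T+\delta\|\dot z_1\|_2+\sum_i\tau_k\|\dot g_i\|_{W^{1,p}}+\sum_i\tau_k\|\dot z_i\|_1)$. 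The step $i=1$ is handled separately from the equilibrium conditions \eqref{equilibriumu0}--\eqref{equilibriumz0} for $(u_0,z_0)$, which gives $\tau_k\|\dot z_1\|_{H^1}\le C\tau_k(\|\dot z_1\|_1+\|\dot g_1\|_{W^{1,p}})$; in the right-hand side one then has $\sum_i\tau_k\|\dot z_i\|_1\le|\Om|$ and $\sum_i\tau_k\|\dot g_i\|_{W^{1,p}}\le T^{1/2}\|g\|_{H^1([0,T];W^{1,p})}$ by Hölder, both uniform in $k$ and $\delta$.

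The main obstacle is the remaining term $\delta\|\dot z_1\|_2$, which is not obviously uniform in $\delta$. Here I would invoke \eqref{e.H11} of Lemma~\ref{l.H1H1} at $t=t^k_1=\tau_k$, giving $\delta\|\dot z_1\|_2\le Ce^{C\tau_k/\delta}$; this is bounded by $Ce^C$ as soon as $\tau_k\le\delta$, that is, for $k$ large enough depending on $\delta$. Since in Theorem~\ref{t.vanevolution} the limit $k\to\infty$ is taken before $\delta\to 0$, restricting to $\tau_k\le\delta$ is harmless, and for such $k$ one concludes $\int_0^T\|\dot z^\delta_k(\tau)\|_{H^1}\,\di\tau\le C(1+\int_0^T\|\dot g(\tau)\|_{W^{1,p}}^2\,\di\tau)$ with $C$ independent of $k$ and $\delta$, which is the claim. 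Apart from this bookkeeping, the genuinely delicate ingredients are the $W^{1,r}$-regularity with $r>2$ of Lemma~\ref{l.regLp}, which is inherently two-dimensional, and the idea of interpolating against $L^1$ so that the monotonicity of $z$ closes the estimate.
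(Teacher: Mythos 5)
Your proposal is correct and follows essentially the same route as the paper: the weaker coercivity $a(a-b)$ in place of $\tfrac12(a^2-b^2)$, the $L^1$--$H^1$ interpolation exploited via the telescoping of $\sum_i\tau_k\|\dot z_i\|_1$, the normalized inequality fed into the discrete Gronwall lemma, the separate treatment of $i=1$ via the equilibrium of $(u_0,z_0)$, and the final control of $\delta\|\dot z_1\|_2$ through \eqref{e.H11} under $\tau_k\le\delta$. No gaps; the argument matches the paper's proof.
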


In the next proposition we provide a discrete energy inequality.

\begin{proposition}\label{prop3}
Let $\delta>0$ be fixed. Then, the following facts hold:
\begin{itemize}
\item[$(a)$] The sequences $\bar{u}^{\delta}_{k}$ and $\ubar{u}^{\delta}_{k}$ are bounded in $L^{\infty}([ 0,T] ; H^{1}(\Om;\R^{2}))$;
\item[$(b)$] For every $t \in [0,T]$  we have 
\begin{displaymath}
\bar{u}^{\delta}_k (t) \in \argmin\, \{ \E ( u , \bar{z}^{\delta}_k (t) ) :  u \in H^{1}(\Om;\R^{2}), \, u = g ( t_k(t) ) \text{ on $\partial_D \Om$}  \} \,;
\end{displaymath}
\item[$(c)$] There exists a constant $C>0$ such that for every $k\in\mathbb{N}$ and every $t\in[0,T]$
\begin{equation}\label{energyEstimate}
\begin{split}
\F(\bar{u}^{\delta}_{k} (t) , \bar{z}^{\delta}_{k} (t ) ) & \leq \F(u_{0} , z_{0} ) - \tfrac{1}{2\delta} \int_{0}^{t_{k}(t)}  \!\!\!\!\!  |\partial^-_z \F |^{2} ( \bar{u}^{\delta}_{k} (\tau) , z^{\delta}_{k} (\tau)) \, \di \tau - \tfrac{\delta}{2}\int_{0}^{ t_{k}(t)} \!\!\!\!\! \|\dot{z}^{\delta}_{k} (\tau) \|^{2}_{2} \, \di \tau
\\
&
\qquad + \int_{0}^{t_{k}(t)}\!\!\!\!\!\mathcal{P} ( \ubar{u}^{\delta}_{k}(\tau) , \ubar{z}^{\delta}_{k}(\tau) , \dot g(\tau) ) \, \di \tau + R_k \,,
\end{split}
\end{equation}
where $R_k \geq 0$ is such that $R_k \to 0$ as $k\to\infty$.
\end{itemize}
\end{proposition}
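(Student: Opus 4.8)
The plan is the following. Part~$(a)$ follows from the energy bound already implicit in the proof of Proposition~\ref{p.convj}: iterating the chain of inequalities there gives $\F(u^{k}_{i},z^{k}_{i})\le\F(u^{k}_{i-1}+g(t^{k}_{i})-g(t^{k}_{i-1}),z^{k}_{i-1})$, and estimating the change of~$\E$ under the boundary update by Lemma~\ref{l.HMWw}$(b)$--$(c)$ and a discrete Gronwall argument yields $\F(u^{k}_{i},z^{k}_{i})\le C$ uniformly in~$i$,~$k$,~$\delta$; since $h(z)\ge h(0)>0$ and $|\strain|^{2}=|\strain_{d}|^{2}+|\strain_{v}^{+}|^{2}+|\strain_{v}^{-}|^{2}$, the density~$W$ is coercive, hence $\|\strain(u^{k}_{i})\|_{2}^{2}\le C\E(u^{k}_{i},z^{k}_{i})\le C$ and Korn's inequality together with $u^{k}_{i}=g(t^{k}_{i})$ on~$\partial_{D}\Om$ gives the uniform bound on $\|u^{k}_{i}\|_{H^{1}}$ (alternatively, this is Lemma~\ref{l.regLp}). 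Part~$(b)$ is immediate from~\eqref{e.3}, since $\D(z^{k}_{i})$ does not depend on~$u$ and $\bar{u}^{\delta}_{k}(t)=u^{k}_{i}$, $\bar{z}^{\delta}_{k}(t)=z^{k}_{i}$, $g(t_{k}(t))=g(t^{k}_{i})$ for $t\in(t^{k}_{i-1},t^{k}_{i}]$.

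For~$(c)$ I would establish a one-step estimate. Fix~$i$, set $\dot{z}_{i}\coloneq(z^{k}_{i}-z^{k}_{i-1})/\tau_{k}\le0$ and $\Delta g_{i}\coloneq g(t^{k}_{i})-g(t^{k}_{i-1})$. Since $z^{k}_{i}$ minimizes the convex map $z\mapsto\F(u^{k}_{i},z)+\tfrac{\delta}{2\tau_{k}}\|z-z^{k}_{i-1}\|_{2}^{2}$ over $\{z\le z^{k}_{i-1}\}$, testing the optimality with~$z^{k}_{i-1}$ and using~\eqref{e.12} gives $\partial_{z}\F(u^{k}_{i},z^{k}_{i})[z^{k}_{i-1}-z^{k}_{i}]=\tau_{k}\delta\|\dot{z}_{i}\|_{2}^{2}$; hence, by convexity of $z\mapsto\F(u^{k}_{i},z)$ and by the slope identity~\eqref{e.5},
\begin{displaymath}
\F(u^{k}_{i},z^{k}_{i})\le\F(u^{k}_{i},z^{k}_{i-1})-\tfrac{\tau_{k}\delta}{2}\|\dot{z}_{i}\|_{2}^{2}-\tfrac{\tau_{k}}{2\delta}|\partial^{-}_{z}\F|^{2}(u^{k}_{i},z^{k}_{i})\,.
\end{displaymath}
For the displacement part, $u^{k}_{i-1}$ solves $\partial_{u}\F(u^{k}_{i-1},z^{k}_{i-1})[\varphi]=0$ for every~$\varphi$ vanishing on~$\partial_{D}\Om$ (being a minimizer of a convex differentiable energy by~\eqref{e.3}), while $u^{k}_{i}-u^{k}_{i-1}-\Delta g_{i}$ vanishes on~$\partial_{D}\Om$; so the quadratic upper bound for $W(z^{k}_{i-1},\cdot)$ implied by the Lipschitz estimate Lemma~\ref{l.HMWw}$(b)$, followed by Lemma~\ref{l.regLp}, gives
\begin{displaymath}
\F(u^{k}_{i},z^{k}_{i-1})-\F(u^{k}_{i-1},z^{k}_{i-1})\le\mathcal{P}(u^{k}_{i-1},z^{k}_{i-1},\Delta g_{i})+C\bigl(\|\Delta g_{i}\|_{W^{1,p}}^{2}+\|z^{k}_{i}-z^{k}_{i-1}\|_{\ell}^{2}\bigr)\,,
\end{displaymath}
where $\mathcal{P}(u^{k}_{i-1},z^{k}_{i-1},\Delta g_{i})=\partial_{u}\F(u^{k}_{i-1},z^{k}_{i-1})[\Delta g_{i}]$. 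Adding the two inequalities yields the one-step estimate, the case $i=1$ being identical thanks to~\eqref{equilibriumu0}--\eqref{equilibriumz0}.

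Summing over $i=1,\dots,\bar{i}$ with $\bar{i}$ such that $t\in(t^{k}_{\bar{i}-1},t^{k}_{\bar{i}}]$ (so $t^{k}_{\bar{i}}=t_{k}(t)$) telescopes~$\F$ and produces $\F(u_{0},z_{0})$; recalling the interpolants~\eqref{int1}--\eqref{int3} and the linearity of~$\mathcal{P}$ in its third argument, one recognizes $\sum_{i}\tau_{k}\|\dot{z}_{i}\|_{2}^{2}=\int_{0}^{t_{k}(t)}\|\dot{z}^{\delta}_{k}\|_{2}^{2}\,\di\tau$, $\sum_{i}\mathcal{P}(u^{k}_{i-1},z^{k}_{i-1},\Delta g_{i})=\int_{0}^{t_{k}(t)}\mathcal{P}(\ubar{u}^{\delta}_{k},\ubar{z}^{\delta}_{k},\dot{g})\,\di\tau$, and $\sum_{i}\tau_{k}|\partial^{-}_{z}\F|^{2}(u^{k}_{i},z^{k}_{i})=\int_{0}^{t_{k}(t)}|\partial^{-}_{z}\F|^{2}(\bar{u}^{\delta}_{k},\bar{z}^{\delta}_{k})\,\di\tau$. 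Collecting $C\sum_{i=1}^{k}(\|\Delta g_{i}\|_{W^{1,p}}^{2}+\|z^{k}_{i}-z^{k}_{i-1}\|_{\ell}^{2})$ into $R_{k}\ge0$, the first sum is $\le\tau_{k}\|g\|_{H^{1}(0,T;W^{1,p})}^{2}\to0$, while $\|z^{k}_{i}-z^{k}_{i-1}\|_{\ell}^{2}=\tau_{k}^{2}\|\dot{z}_{i}\|_{\ell}^{2}\le C\tau_{k}^{2}\|\dot{z}_{i}\|_{H^{1}}^{2}$ by the two-dimensional Sobolev embedding $H^{1}\hookrightarrow L^{\ell}$, so the second sum is $\le C\tau_{k}\int_{0}^{T}\|\dot{z}^{\delta}_{k}\|_{H^{1}}^{2}\,\di\tau\le C\tau_{k}\,\delta^{-1}e^{CT/\delta}\to0$ as $k\to\infty$ for~$\delta$ fixed, by Lemma~\ref{l.H1H1}. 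Finally one has to pass from $\bar{z}^{\delta}_{k}$ to~$z^{\delta}_{k}$ inside the slope, i.e.\ to prove $\int_{0}^{t_{k}(t)}|\partial^{-}_{z}\F|^{2}(\bar{u}^{\delta}_{k},z^{\delta}_{k})\,\di\tau\le\sum_{i}\tau_{k}|\partial^{-}_{z}\F|^{2}(u^{k}_{i},z^{k}_{i})+\tilde R_{k}$ with $\tilde R_{k}\ge0$, $\tilde R_{k}\to0$, and to absorb $\tilde R_{k}/(2\delta)$ into~$R_{k}$.

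I expect this last comparison to be the main obstacle, because the $L^{2}$-slope is not Lipschitz in~$z$ for the~$H^{1}$-topology. I would estimate it via Lemma~\ref{SlopeLemma}, writing $z^{\delta}_{k}(\tau)-z^{k}_{i}=c(\tau)(z^{k}_{i-1}-z^{k}_{i})$ with $c(\tau)\in[0,1]$ and splitting $\partial_{z}\F(u^{k}_{i},z^{\delta}_{k}(\tau))[\varphi]-\partial_{z}\F(u^{k}_{i},z^{k}_{i})[\varphi]$ into a lower-order part — bounded by $c(\tau)\|z^{k}_{i}-z^{k}_{i-1}\|_{H^{1}}^{\theta}$ for some $\theta\in(0,1)$ using H\"older, the uniform $W^{1,r}$-bound of Lemma~\ref{l.regLp}, the Lipschitz character of $h',f'$ and the pointwise bound $z^{k}_{i}\in[0,1]$ — and the gradient part $c(\tau)\int_{\Om}\nabla(z^{k}_{i-1}-z^{k}_{i})\cdot\nabla\varphi$, which I would integrate by parts using the Euler--Lagrange equations~\eqref{e.11} for~$z^{k}_{i}$ and~$z^{k}_{i-1}$ (with natural boundary condition) to trade~$\nabla\varphi$ for~$\varphi$ at the price of zero-order terms and of $\delta\,|\dot{z}_{i}-\dot{z}_{i-1}|$; combined with $|\partial^{-}_{z}\F|(u^{k}_{i},z^{k}_{i})=\delta\|\dot{z}_{i}\|_{2}$ from~\eqref{e.5} and with Lemma~\ref{l.H1H1}, and paying once more a factor~$\tau_{k}$, this gives $\tilde R_{k}\to0$ for~$\delta$ fixed and hence~\eqref{energyEstimate}. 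Apart from this point, the argument is a routine combination of convexity and of the regularity Lemma~\ref{l.regLp}; I would only stress that the resulting bound is \emph{not} uniform in~$\delta$ (the errors are controlled only because each carries an extra power of~$\tau_{k}$), which is precisely why the separate, $\delta$-uniform arc-length bound of Lemma~\ref{l.length} is still needed for the subsequent limit $\delta\to0$.
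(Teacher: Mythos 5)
Your proof of parts $(a)$ and $(b)$, and of the phase-field half of $(c)$, is essentially the paper's argument: convexity of $z\mapsto\F(u^{k}_{i},z)$, testing with $z^{k}_{i-1}$, and then eq.~\eqref{e.5}--\eqref{e.6} to convert $\tau_{k}\delta\|\dot z_{i}\|_{2}^{2}$ into the sum $\tfrac{\tau_{k}}{2\delta}|\partial^{-}_{z}\F|^{2}(u^{k}_{i},z^{k}_{i})+\tfrac{\delta\tau_{k}}{2}\|\dot z_{i}\|_{2}^{2}$. Your treatment of the displacement update is a genuine variant that also works: you compare $u^{k}_{i}$ directly to $u^{k}_{i-1}$ via the $C^{1,1}$ quadratic upper bound from Lemma~\ref{l.HMWw}$(b)$ together with Lemma~\ref{l.regLp}, extracting the power $\mathcal{P}(u^{k}_{i-1},z^{k}_{i-1},\Delta g_{i})$ from the stationarity of $u^{k}_{i-1}$. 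The paper instead passes through the intermediate minimizer $u^{k}_{i,1}$ (minimality of $u^{k}_{i,1}$ controls the change of Dirichlet datum at fixed $z^{k}_{i-1}$, then a fundamental-theorem-of-calculus integration along the segment $u^{k}_{i-1}+\theta\Delta g_{i}$ yields the power term, and convexity in $u$ handles the correction $\partial_{u}\F(u^{k}_{i},z^{k}_{i-1})[u^{k}_{i,1}-u^{k}_{i}]$, estimated in~\eqref{e.5.1}). Both routes give the same remainder $R_{k}=C\sum_{i}(\|z^{k}_{i}-z^{k}_{i-1}\|_{H^{1}}^{2}+\|\Delta g_{i}\|_{H^{1}}^{2})$ and use Lemma~\ref{l.H1H1} (hence a bound that is \emph{not} uniform in $\delta$, as you correctly stress) to show $R_{k}\to0$.

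Where you go astray is the final paragraph on replacing $\bar z^{\delta}_{k}(\tau)$ by $z^{\delta}_{k}(\tau)$ inside the slope integral, which you present as the main obstacle. This step is not present in the paper's proof: the chain of inequalities leading to~\eqref{9.2} produces exactly $\int_{t^{k}_{i-1}}^{t^{k}_{i}}|\partial^{-}_{z}\F|^{2}(\bar u^{\delta}_{k}(\tau),\bar z^{\delta}_{k}(\tau))\,\di\tau$ with the \emph{piecewise constant} interpolant $\bar z^{\delta}_{k}$, and the proof stops there; the occurrence of $z^{\delta}_{k}$ inside the slope in the displayed estimate~\eqref{energyEstimate} is most plausibly a misprint. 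In any case the distinction is immaterial for the subsequent use of the estimate: in Theorem~\ref{t.viscous} one only needs the lower semicontinuity of Lemma~\ref{prop1} combined with Fatou, and both $\bar z^{\delta}_{k}(\tau)\rightharpoonup z_{\delta}(\tau)$ and $z^{\delta}_{k}(\tau)\rightharpoonup z_{\delta}(\tau)$ weakly in $H^{1}(\Om)$, so the limit inequality is the same. Your proposed comparison of slopes would in fact be hard to close as sketched: $|\partial_{z}^{-}\F|$ involves a supremum over $\varphi\leq0$ measured only in $L^{2}$, the gradient contribution $\int_{\Om}\nabla(z^{\delta}_{k}(\tau)-z^{k}_{i})\cdot\nabla\varphi$ is not controlled by $\|\varphi\|_{2}$, and the Euler--Lagrange conditions~\eqref{e.11} are variational inequalities satisfied only at the nodal values $z^{k}_{i}$, not along the segment $z^{\delta}_{k}(\tau)$. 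Dropping that paragraph leaves a correct proof, with $\bar z^{\delta}_{k}$ in place of $z^{\delta}_{k}$ inside the slope.
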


\begin{proof} We notice that condition~(b) is true by definition of~$\bar{u}^{\delta}_{k}$ and~$\bar{z}^{\delta}_{k}$ and by~\eqref{e.3}.

Let us prove the energy estimate $(c)$. For $k\in\mathbb{N}$, $t\in (0,T]$ fixed, let $i\in\{1,\ldots,k\}$ be such that $t\in(t^{k}_{i-1},t^{k}_{i}]$. By convexity of $z\mapsto\F(u^{k}_{i} ,  z)$ and by applying Proposition~\ref{p.2} we get
\begin{equation}\label{9.2}
\begin{split}
\F(u^{k}_{i} , z^{k}_{i-1})& \geq \F ( u^{k}_{i} , z^{k}_{i} ) + \partial_{z} \F ( u^{k}_{i} , z^{k}_{i} ) [ z^{k}_{i-1} - z^{k}_{i} ]
= \F ( u^{k}_{i} , z^{k}_{i} ) - \tau_{k} \partial_{z} \F ( u^{k}_{i} , z^{k}_{i} ) [ \dot{z}^{\delta}_{k} (t) ]
\\
&
\vphantom{\int} = \F ( u^{k}_{i} , z^{k}_{i} ) + \tau_{k} |\partial^-_z \F | ( u^{k}_{i} , z^{k}_{i} ) \| \dot{z}^{\delta}_{k} (t) \|_{2}
\\
&
= \F ( u^{k}_{i} , z^{k}_{i} ) + \tfrac{1}{2\delta} \int_{t^{k}_{i-1}}^{t^{k}_{i}} \!\!\!\! |\partial^-_z \F |^{2} ( \bar{u}^{\delta}_{k} (\tau) , \bar{z}^{\delta}_{k} (\tau) ) \,\di \tau + \tfrac{\delta}{2} \int_{t^{k}_{i-1}}^{t^{k}_{i}} \!\!\!\! \| \dot{z}^{\delta}_{k} (\tau) \|_{2}^{2} \, \di \tau \,.
\end{split}
\end{equation}

We now have to pass from $u^{k}_{i}$ to~$u^{k}_{i-1}$ in the left-hand side of~\eqref{9.2}. We perform this passage in two steps, first moving from~$u^{k}_{i}$ to~$u^{k}_{i,1}$ and then from~$u^{k}_{i,1}$ to~$u^{k}_{i-1}$. For the second step we can simply rely on the minimality~\eqref{minu} of~$u^{k}_{i,1}$. For the first step, instead, we make use of the regularity estimate of Lemma~\ref{l.regLp}. Hence, by convexity of $u\mapsto \F (u, z)$ we have that
\begin{equation} \label{e.55}
\begin{split}
\F(u^{k}_{i}, z^{k}_{i-1} ) & \leq \F (u^{k}_{i,1} , z^{k}_{i-1}) -\partial_{u} \F( u_{i}^{k}, z^{k}_{i-1}) [u^{k}_{i,1} - u^{k}_{i}]
\\
&
\leq \F (u^{k}_{i-1} + g(t^{k}_{i}) - g(t^{k}_{i-1}) , z^{k}_{i-1} ) -\partial_{u} \F( u_{i}^{k}, z^{k}_{i-1}) [u^{k}_{i,1} - u^{k}_{i}]
\\
&
= \F (u^{k}_{i-1} , z^{k}_{i-1} ) + \int_{t^{k}_{i-1}}^{t^{k}_{i}}  \partial_{u} \F\Big(u^{k}_{i-1} + (s-t^{k}_{i-1})\frac{g(t^{k}_{i}) - g(t^{k}_{i-1})}{\tau_{k}} , z^{k}_{i-1} \Big) \Big[ \frac{g(t^{k}_{i}) - g(t^{k}_{i-1})}{\tau_{k}} \Big] \, \di s  
\\
&
\qquad - \partial_{u} \F( u_{i}^{k}, z^{k}_{i-1}) [u^{k}_{i,1} - u^{k}_{i}] \,.
\end{split}
\end{equation}
We estimate the right-hand side of~\eqref{e.55}. By minimality of~$u^{k}_{i}$ we have that
\begin{displaymath}
\begin{split}
\partial_{u} \F ( u_{i}^{k} , z^{k}_{i-1}) [u^{k}_{i,1} - u^{k}_{i} ] & = \partial_{u} \F ( u^{k}_{i} , z^{k}_{i} ) [ u^{k}_{i,1} - u^{k}_{i} ] + \int_{\Om} \mu ( h( z^{k}_{i-1} ) - h( z^{k}_{i} ))  \strain_{d} ( u^{k}_{i} ){\,:\,} \strain_{d}(u^{k}_{i,1} - u^{k}_{i}) \, \di x 
\\
&
\qquad  + \int_{\Om} \kappa ( h( z^{k}_{i-1} ) - h( z^{k}_{i} )) \strain_{v}^{+} (u^{k}_{i}){\, : \,} \strain_{v}(u^{k}_{i,1} - u^{k}_{i}) \, \di x
\\
&
=\int_{\Om} \mu ( h( z^{k}_{i-1} ) - h( z^{k}_{i} ))  \strain_{d} ( u^{k}_{i} ){\,:\,} \strain_{d}(u^{k}_{i,1} - u^{k}_{i}) \, \di x 
\\
&
\qquad  + \int_{\Om} \kappa ( h( z^{k}_{i-1} ) - h( z^{k}_{i} )) \strain_{v}^{+} (u^{k}_{i}){\, : \,} \strain_{v}(u^{k}_{i,1} - u^{k}_{i}) \, \di x \,.
\end{split}
\end{displaymath}
Hence, choosing $r>2$ as in Lemma~\ref{l.regLp}, from the regularity of~$h$ and Sobolev embedding we deduce that
\begin{equation}\label{e.5.1}
| \partial_{u} \F ( u_{i}^{k} , z^{k}_{i-1}) [u^{k}_{i,1} - u^{k}_{i} ] |  \leq C \| z^{k}_{i-1} - z^{k}_{i}\|_{\nu}  \| u^{k}_{i,1} - u^{k}_{i} \|_{W^{1,r}} \leq C \| z^{k}_{i} - z^{k}_{i-1} \|_{H^{1}}^{2}\,,
\end{equation}
where~$C$ is a positive constant independent of~$k$ and~$\delta$ and $\tfrac{1}{\nu} + \tfrac{2}{r} = 1$. In a similar way, by Lemma~\ref{l.HMWw}, we obtain that for every $s\in [t^{k}_{i-1}, t^{k}_{i}]$
\begin{displaymath}
\begin{split}
\partial_{u} \F  \Big(u^{k}_{i-1} & + (s - t^{k}_{i-1}) \frac{g(t^{k}_{i}) - g(t^{k}_{i-1})}{\tau_{k}} , z^{k}_{i-1} \Big) \Big[ \frac{g(t^{k}_{i}) - g(t^{k}_{i-1})}{\tau_{k}} \Big] 
\\
&
\leq \partial_{u} \F (u^{k}_{i-1} , z^{k}_{i-1}) \Big[ \frac{g(t^{k}_{i}) - g(t^{k}_{i-1})}{\tau_{k}} \Big] + \frac{C}{ \tau_{k}} \| g(t^{k}_{i}) - g(t^{k}_{i-1})\|_{H^{1}}^{2}\,,
\end{split}
\end{displaymath}
which yields
\begin{equation}\label{e.5.2}
\begin{split}
\int_{t^{k}_{i-1}}^{t^{k}_{i}} \partial_{u} \F  \Big(u^{k}_{i-1} & + (s - t^{k}_{i-1}) \frac{g(t^{k}_{i}) - g(t^{k}_{i-1})}{\tau_{k}} , z^{k}_{i-1} \Big) \Big[ \frac{g(t^{k}_{i}) - g(t^{k}_{i-1})}{\tau_{k}} \Big] \, \di s
\\
&
\leq \int_{t^{k}_{i-1}}^{t^{k}_{i}} \partial_{u} \F (u^{k}_{i-1}, z^{k}_{i-1})[\dot{g}(\tau)]\, \di \tau + C  \| g(t^{k}_{i}) - g(t^{k}_{i-1})\|_{H^{1}}^{2}
\\
& = \int_{t^{k}_{i-1}}^{t^{k}_{i}} \mathcal{P} (u^{k}_{i-1}, z^{k}_{i-1}, \dot{g}(\tau)) \, \di \tau + C  \| g(t^{k}_{i}) - g(t^{k}_{i-1})\|_{H^{1}}^{2}\,.
\end{split}
\end{equation}

Combining~\eqref{e.55}-\eqref{e.5.2} we obtain~\eqref{energyEstimate} with
\begin{displaymath}
R_k = C \sum_{i=1}^{k} \| z^{k}_{i} - z^{k}_{i-1} \|_{H^{1}}^{2} + \| g(t^{k}_{i}) - g(t^{k}_{i-1})\|_{H^{1}}^{2}\,.
\end{displaymath}
In particular, $R_k\to 0$ as $k\to\infty$ thanks to the bound~\eqref{e.H12} of Lemma~\ref{l.H1H1} and to the regularity of~$g$.
\end{proof}

\section{Viscous and quasistatic evolutions} \label{s.evolution}

In this section we show the existence of a viscous evolution in the sense of Definition \ref{d.Evolution} for every $\delta>0$. Furthermore, we study the limit as $\delta\to 0$ of the above evolutions proving that, in a suitable time-reparametrized setting, they converge to a vanishing viscosity evolution in the sense of Definition~\ref{d.vanevolution}.

\begin{theorem}\label{t.viscous}
Let $g\in H^{1}([0,T] ; W^{1,p}(\Om;\R^{2}))$ for some $p \in (2,+\infty)$ and $(u_{0}, z_{0}) \in H^{1}(\Om;\R^{2}) \times H^{1}(\Om;[0,1])$ be such that $u_0 = g(0)$ on $\partial_{D}\Om$ and \eqref{equilibriumu0}--\eqref{equilibriumz0} hold.
For every $\delta>0$ and every $k\in\mathbb{N}$ let $z^{\delta}_{k}$,~$\bar{z}^{\delta}_{k}$,~$\underline{z}^{\delta}_{k}$,~$\bar{u}^{\delta}_{k}$, and~$\underline{u}^{\delta}_{k}$ be as in~\eqref{int1}-\eqref{int3} with $\bar{u}^{\delta}_{k} (0) = \underline{u}^{\delta}_{k} (0) = u_{0}$ and $z^{\delta}_{k} (0) = \bar{z}^{\delta}_{k} (0) = \underline{z}^{\delta}_{k} (0) = z_{0}$. Then, there exists a viscous evolution $(u_{\delta}, z_{\delta}) \colon [0,T] \to H^{1}(\Om;\R^{2})\times H^{1}(\Om)$  with initial condition~$(u_{\delta}(0) , z_{\delta}(0)) = (u_{0} , z_0)$ and such that, up to a subsequence, $u_{k}^{\delta} (t) \to u_{\delta}(t)$ in $H^{1}(\Om;\R^{2})$ for every $t\in[0,T]$ and $z_{k}^{\delta} \rightharpoonup z_{\delta}$ weakly in~$H^{1}([0,T]; H^{1}(\Om))$.

Moreover, it holds
\begin{eqnarray}
&& \displaystyle \sup_{\delta>0}\, \int_{0}^{T} \|\dot{z}_{\delta}(\tau) \|_{H^{1}}\, \di \tau < +\infty \label{e.boundviscosity} \,, \\[2mm]
&& \displaystyle \delta \| \dot{z}_{\delta} (t) \|_{2} = |\partial_{z}^{-} \F| ( u_{\delta}(t), z_{\delta}(t)) \qquad \text{for a.e.~$t\in[0,T]$} \label{e.ODEviscosity} \,.
\end{eqnarray}
\end{theorem}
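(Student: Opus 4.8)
The plan is to fix $\delta>0$, let $k\to\infty$ along a subsequence to obtain a limit $(u_{\delta},z_{\delta})$, verify that it is a viscous evolution in the sense of Definition~\ref{d.Evolution}, and finally upgrade the energy inequality coming from Proposition~\ref{prop3} to the balance~\eqref{e.enbaldelta} through a chain rule. \emph{Compactness.} For fixed $\delta$, Lemma~\ref{l.H1H1} bounds $z^{\delta}_{k}$ in $H^{1}([0,T];H^{1}(\Om))$; since $\|z^{\delta}_{k}(t)\|_{H^{1}}\le\|z_{0}\|_{H^{1}}+\int_{0}^{T}\|\dot z^{\delta}_{k}\|_{H^{1}}\,\di\tau$, the quantities $\sup_{t}\|z^{\delta}_{k}(t)\|_{H^{1}(\Om)}$ are bounded as well. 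Extracting a subsequence with $z^{\delta}_{k}\rightharpoonup z_{\delta}$ weakly in $H^{1}([0,T];H^{1}(\Om))$, the compact embedding into $C([0,T];L^{2}(\Om))$ gives $z^{\delta}_{k}(t)\to z_{\delta}(t)$ in $L^{2}(\Om)$, hence $z^{\delta}_{k}(t)\rightharpoonup z_{\delta}(t)$ weakly in $H^{1}(\Om)$ for every $t$; because $\|\bar z^{\delta}_{k}(t)-z^{\delta}_{k}(t)\|_{L^{2}}+\|\underline z^{\delta}_{k}(t)-z^{\delta}_{k}(t)\|_{L^{2}}\le C_{\delta}\sqrt{\tau_{k}}$, the step interpolants $\bar z^{\delta}_{k}(t)$ and $\underline z^{\delta}_{k}(t)$ converge likewise. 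Since $\bar u^{\delta}_{k}(t)$ minimises $\E(\cdot,\bar z^{\delta}_{k}(t))$ with datum $g(t_{k}(t))\to g(t)$ in $H^{1}(\Om;\R^{2})$ (Proposition~\ref{prop3}(b)), Lemma~\ref{lemma1} yields $\bar u^{\delta}_{k}(t)\to u_{\delta}(t)$ and $\underline u^{\delta}_{k}(t)\to u_{\delta}(t)$ strongly in $H^{1}(\Om;\R^{2})$, where $u_{\delta}(t)$ minimises $\E(\cdot,z_{\delta}(t))$ with datum $g(t)$ --- this is property~(c). Property~(b) passes to the limit from $0\le z^{k}_{i}\le z^{k}_{i-1}\le 1$ (Proposition~\ref{p.convj}) and the monotonicity of the piecewise affine interpolant; property~(a) follows, with $z_{\delta}\in H^{1}([0,T];H^{1}(\Om))$ by weak closedness, $u_{\delta}\in C([0,T];H^{1}(\Om;\R^{2}))$ by Lemma~\ref{l.regLp} (using continuity of $g$ into $W^{1,p}$, of $z_{\delta}$ into $L^{\ell}$, and $W^{1,r}\hookrightarrow H^{1}$), and the initial conditions read off at $t=0$.

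Next I would pass to the $\liminf_{k}$ in~\eqref{energyEstimate}. The left-hand side is handled by Lemma~\ref{lsc-F}. For the slope term, Lemma~\ref{prop1} (with $\bar u^{\delta}_{k}(\tau)\to u_{\delta}(\tau)$ in $\UU$ and $z^{\delta}_{k}(\tau)\rightharpoonup z_{\delta}(\tau)$) combined with Fatou gives $\int_{0}^{t}|\partial_{z}^{-}\F|^{2}(u_{\delta},z_{\delta})\,\di\tau\le\liminf_{k}\int_{0}^{t_{k}(t)}|\partial_{z}^{-}\F|^{2}(\bar u^{\delta}_{k},z^{\delta}_{k})\,\di\tau$; the viscous term by weak lower semicontinuity of the $L^{2}([0,T];L^{2}(\Om))$ norm, since $\dot z^{\delta}_{k}\rightharpoonup\dot z_{\delta}$; the power term $\int\mathcal{P}(\underline u^{\delta}_{k},\underline z^{\delta}_{k},\dot g)$ converges by the Lipschitz bound and growth of $\partial_{\strain}W$ (Lemma~\ref{l.HMWw}), the strong $H^{1}$-convergence of $\underline u^{\delta}_{k}(\tau)$, the bound $0\le\underline z^{\delta}_{k}\le 1$, and dominated convergence; $R_{k}\to 0$ by Proposition~\ref{prop3}(c); and the gap between $t_{k}(t)$ and $t$ is $O(\tau_{k})$, hence negligible for fixed $\delta$ as all integrands are $L^{1}$-bounded. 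This would give
\begin{equation*}
\F(u_{\delta}(t),z_{\delta}(t))\le\F(u_{0},z_{0})-\tfrac{1}{2\delta}\int_{0}^{t}|\partial_{z}^{-}\F|^{2}(u_{\delta},z_{\delta})\,\di\tau-\tfrac{\delta}{2}\int_{0}^{t}\|\dot z_{\delta}\|_{2}^{2}\,\di\tau+\int_{0}^{t}\mathcal{P}(u_{\delta},z_{\delta},\dot g)\,\di\tau .
\end{equation*}

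To obtain the reverse inequality I would prove the chain rule: since $u_{\delta}(t)$ minimises $\F(\cdot,z_{\delta}(t))$ under $u=g(t)$ on $\partial_{D}\Om$, and $u_{\delta}\in H^{1}([0,T];W^{1,r}(\Om;\R^{2}))$ by Lemma~\ref{l.regLp}, the map $t\mapsto\F(u_{\delta}(t),z_{\delta}(t))$ is absolutely continuous with
\begin{equation*}
\tfrac{\di}{\di t}\F(u_{\delta}(t),z_{\delta}(t))=\partial_{z}\F(u_{\delta}(t),z_{\delta}(t))[\dot z_{\delta}(t)]+\mathcal{P}(u_{\delta}(t),z_{\delta}(t),\dot g(t))\qquad\text{for a.e.\ }t ,
\end{equation*}
the $u$-part reducing to $\mathcal{P}$ because the minimality of $u_{\delta}(t)$ annihilates the variation along $\dot u_{\delta}(t)-\dot g(t)$; this is established by the separate convexity of $\F$ and difference quotients, exactly as in~\eqref{e.55}--\eqref{e.5.2} and~\cite[Lemma~2.3]{MR2401600}, and one checks en route that $\partial_{z}\F(u_{\delta},z_{\delta})[\dot z_{\delta}]\in L^{1}(0,T)$ from the $W^{1,r}$-bound on $u_{\delta}$. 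Since $z_{\delta}$ is non-increasing, $\dot z_{\delta}(t)\le 0$ a.e., so Lemma~\ref{SlopeLemma} gives $-\partial_{z}\F(u_{\delta},z_{\delta})[\dot z_{\delta}]\le|\partial_{z}^{-}\F|(u_{\delta},z_{\delta})\,\|\dot z_{\delta}\|_{2}\le\tfrac{1}{2\delta}|\partial_{z}^{-}\F|^{2}(u_{\delta},z_{\delta})+\tfrac{\delta}{2}\|\dot z_{\delta}\|_{2}^{2}$, and integrating the chain rule produces the reverse of the inequality above. Hence all these inequalities become equalities a.e.: property~(d), namely~\eqref{e.enbaldelta}, holds, and equality in the Young inequality forces $\delta\|\dot z_{\delta}(t)\|_{2}=|\partial_{z}^{-}\F|(u_{\delta}(t),z_{\delta}(t))$ for a.e.\ $t$, i.e.~\eqref{e.ODEviscosity}. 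Finally, Lemma~\ref{l.length} bounds $\int_{0}^{T}\|\dot z^{\delta}_{k}\|_{H^{1}}\,\di\tau$ uniformly in $k$ and $\delta$; since $v\mapsto\int_{0}^{T}\|v(\tau)\|_{H^{1}}\,\di\tau$ is convex and strongly continuous, hence weakly lower semicontinuous on $L^{1}([0,T];H^{1}(\Om))$, and $\dot z^{\delta}_{k}\rightharpoonup\dot z_{\delta}$, the bound passes to $z_{\delta}$, yielding~\eqref{e.boundviscosity} with a $\delta$-independent constant.

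The hard part is the chain rule of the third step: establishing the absolute continuity of $t\mapsto\F(u_{\delta}(t),z_{\delta}(t))$ and the precise form of its derivative demands a careful interplay between the minimality of $u_{\delta}(t)$ and the merely $W^{1,r}$, $r>2$, time regularity furnished by Lemma~\ref{l.regLp}, together with the verification that $\partial_{z}\F(u_{\delta},z_{\delta})[\dot z_{\delta}]$ is well defined and time-integrable although $\dot z_{\delta}(t)$ is only $H^{1}(\Om)$-valued. The remaining sensitive point --- the lower-semicontinuity passage in the slope term of the second step --- is supplied directly by Lemma~\ref{prop1}.
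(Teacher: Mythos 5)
Your proof is correct and follows essentially the same route as the paper: compactness from Lemma~\ref{l.H1H1}, identification of the limit via Lemma~\ref{lemma1}, the lower energy inequality from the $\liminf$ of~\eqref{energyEstimate} (Lemmas~\ref{lsc-F},~\ref{prop1}, Fatou), the reverse inequality from the chain rule plus Lemma~\ref{SlopeLemma} and Young, and~\eqref{e.boundviscosity} from Lemma~\ref{l.length} by lower semicontinuity. The only cosmetic difference is that you appeal to a slightly different chain-rule reference and spell out the Aubin--Lions-type compactness and the $O(\sqrt{\tau_k})$ proximity of the step interpolants, which the paper leaves implicit.
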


\begin{proof}
Fix $\delta>0$. By Lemma~\ref{l.H1H1} we infer that there exists $z_{\delta} \in H^{1} ( [0,T] ; H^{1} (\Om) )$ such that, up to a subsequence, $z^{\delta}_{k} \rightharpoonup z_{\delta}$ weakly in~$H^{1} ( [0,T]; H^{1} (\Om))$ and $z^{\delta}_{k}(t) \rightharpoonup z_{\delta}(t)$ weakly in~$H^{1} (\Om)$ for every~$t\in[0,T]$. By regularity of~$z^{\delta}_{k}$ we also deduce that $\bar{z}^{\delta}_{k}(t)$ and~$\underline{z}^{\delta}_{k} (t)$ converge to~$z_{\delta}(t)$ weakly in~$H^{1}(\Om)$ for $t\in[0,T]$. Furthermore, inequality~\eqref{e.boundviscosity} follows from Lemma~\ref{l.length} and lower semicontinuity.

For every $t\in[0,T]$ we have that, by~(a) of Proposition~\ref{prop3}, there exists $u_{\delta} (t) \in H^{1}(\Om;\R^{2})$ such that, up to a further time-dependent subsequence, $\bar{u}^{\delta}_{k}(t) \rightharpoonup u_{\delta} (t)$ weakly in~$H^{1}(\Om;\R^{2})$. From Lemma~\ref{lemma1} and from (b) of Proposition~\ref{prop3} we deduce that $\bar{u}^{\delta}_{k}(t) \to u_{\delta} (t)$ in~$H^{1}(\Om;\R^{2})$ and that
\begin{displaymath}
u_{\delta} (t) = \argmin \, \{ \E (u, z_{\delta}(t)) : \, u \in H^{1} ( \Om ; \R^{2} ), \, u = g(t) \text{ on $\partial_{D}\Om$} \} \,.
\end{displaymath}
By uniqueness of minimizer, the above convergence holds for every $t\in[0,T]$ along the whole sequence~$k$. Arguing in the same way for the sequence~$\ubar{u}^{\delta}_{k}(t)$ we get that~$\ubar{u}^{\delta}_{k}(t) \to u_{\delta} (t)$ in~$H^{1}(\Om;\R^{2})$.

It remains to show the energy balance~(d) of Definition~\ref{d.Evolution}. To do this, we first pass to the liminf as $k\to\infty$ in the discrete inequality~\eqref{energyEstimate} of Proposition~\ref{prop3}. Combining the above convergences, the lower semicontinuity of~$\F$, Fatou lemma, Lemma~\ref{prop1}, and the regularity of~$g$, we get the lower inequality
\begin{displaymath}
\begin{split}
\F ( u_{\delta}(t), z_{\delta}(t))  \leq & \ \F(u_0, z_0) - \tfrac{1}{2\delta} \int_{0}^{t} |\partial_{z}^{-} \F |^{2} (u_{\delta} (\tau) , z_{\delta} (s)) \, \di \tau - \tfrac{\delta}{2} \int_{0}^{t} \| \dot{z}_{\delta} (\tau) \|_{2}^{2}\, \di \tau 
\\
&
+ \int_{0}^{t} \mathcal{P} (u_{\delta} (\tau) , z_{\delta}(\tau), \dot{g}(\tau))\, \di \tau \,.
\end{split}
\end{displaymath}

As for the opposite inequality, we notice that $z_{\delta} \in H^{1}([0,T]; H^{1}(\Om))$, so that by chain rule (see, e.g.,~\cite[Corollary~2.9]{MR3021776}), we have
\begin{equation}\label{e.25}
\begin{split}
\F(u_{\delta} (t), z_{\delta}(t)) = \F(u_0, z_0) + \int_{0}^{t} \partial_{z} \F( u_{\delta} (\tau), z_{\delta} (\tau)) [\dot{z}_{\delta}(\tau)] \,\di s + \int_{0}^{t} \mathcal{P} (u_{\delta}(\tau), z_{\delta}(\tau), \dot{g}(\tau))\, \di \tau \,.
\end{split}
\end{equation}
In view of Lemma~\ref{SlopeLemma}, we can estimate~\eqref{e.25} from below with
\begin{equation}\label{e.26}
\F(u_{\delta} (t), z_{\delta}(t)) \geq \F(u_0, z_0) - \int_{0}^{t} |\partial_{z}^{-} \F| ( u_{\delta} (\tau), z_{\delta} (\tau)) \| \dot{z}_{\delta}(\tau) \|_{2} \,\di \tau + \int_{0}^{t} \mathcal{P} (u_{\delta}(\tau), z_{\delta}(\tau), \dot{g}(\tau))\, \di \tau \,.
\end{equation}
The energy equality (d) of Definition~\ref{d.Evolution} follows by Young inequality. Finally, we notice that~\eqref{e.26} and the energy balance imply~\eqref{e.ODEviscosity}, and the proof is thus concluded.
\end{proof}

We can now conclude showing that in the limit as $\delta\to 0$ the viscous evolutions determined in Theorem~\ref{t.viscous} converge to a vanishing viscosity evolution (see Definition~\ref{d.vanevolution}). In order to do this, we first introduce an arc length reparametrization of time which makes~$z_{\delta}$ Lipschitz continuous. For every $t\in[0,T]$ we define
\begin{displaymath}
\sigma_{\delta}(t) \coloneq t + \int_{0}^{t} \|\dot{z}_{\delta} (\tau) \|_{H^{1}}\,\di \tau \,.
\end{displaymath}
Clearly, $\sigma_{\delta} \colon [0,T]\to [0, \sigma_{\delta}(T)]$ is continuous and strictly increasing, and therefore invertible. We denote its inverse with~$t_{\delta}$. For $\sigma \in [0,\sigma_{\delta}(T)]$ we define
\begin{equation}\label{e.31}
\tilde{u}_{\delta} (s) \coloneq u_{\delta}( t_{\delta} (s)) \qquad \text{and} \qquad \tilde{z}_{\delta}(s) \coloneq z_{\delta} (t_{\delta}(s))\,.
\end{equation}
By definition of~$\sigma_{\delta}$, of~$t_{\delta}$, and of~$\tilde{z}_{\delta}$, we have that
\begin{equation}\label{e.32}
t'_{\delta}(s) + \| \tilde{z}_{\delta}'(s)\|_{H^{1}} =1 \qquad\text{for a.e.~$s \in [0, \sigma_{\delta}(T)]$}\,.
\end{equation}
In view of~\eqref{e.boundviscosity}, there exists $S \in (0, +\infty)$ such that $\sigma_{\delta}(T) \leq S$ for every $\delta>0$. Hence, we may extend $t_{\delta}$, $\tilde{u}_{\delta}$, and~$\tilde{z}_{\delta}$ in a constant way in~$[\sigma_{\delta}(T), S]$, so that $t_{\delta} \in W^{1,\infty}(0,S)$ and $\tilde{z}_{\delta} \in W^{1,\infty}([0,S]; H^{1}(\Om))$. Finally, we notice that
\begin{equation}\label{e.30}
\tilde{u}_{\delta}(s) = \argmin \, \{ \E(u, \tilde{z}_{\delta}(s)) : \, u\in H^{1}(\Om;\R^{2}),\, u=g(t_{\delta}(s)) \text{ on $\partial_{D}\Om$}\} \,.
\end{equation}

\begin{theorem}\label{t.vanevolution}
Let $g\in H^{1}([0,T]; W^{1,p}(\Om; \R^{2}))$ for some $p \in (2,+\infty)$ and $(u_{0}, z_{0}) \in H^{1}(\Om;\R^{2}) \times H^{1}(\Om;[0,1])$ be such that $u_0 = g(0)$ on $\partial_{D}\Om$ and \eqref{equilibriumu0}--\eqref{equilibriumz0} hold. For every $\delta>0$ let $t_{\delta}$, $\tilde{u}_{\delta}$, and $\tilde{z}_{\delta}$ be as in~\eqref{e.31}. Then, there exists a vanishing viscosity evolution~$(t, u, z) \colon [0,S] \to [0,T] \times H^{1}(\Om;\R^{2})\times H^{1}(\Om)$ with initial condition~$(u_{0}, z_{0})$ and boundary datum $g$ such that, up to a subsequence, $t_{\delta}\rightharpoonup t$ weakly$^{*}$ in~$W^{1,\infty}(0,S)$, $\tilde{z}_{\delta} \rightharpoonup z$ weakly$^{*}$ in~$W^{1,\infty}([0,S]; H^{1}(\Om))$, and $u_{\delta}(s) \to u(s)$ in $H^{1}(\Om;\R^{2})$ for every $s \in [0,S]$.
\end{theorem}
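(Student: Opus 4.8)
The plan is to pass to the limit $\delta \to 0$ in the reparametrized viscous evolutions, using the compactness afforded by the arc-length reparametrization, and then to verify each item $(a)$--$(e)$ of Definition~\ref{d.vanevolution}. First I would extract convergent subsequences: by~\eqref{e.32} the functions $t_\delta$ are $1$-Lipschitz on $[0,S]$ and the $\tilde z_\delta$ are $1$-Lipschitz from $[0,S]$ into $H^1(\Om)$, so by Arzelà--Ascoli and the Banach--Alaoglu theorem (exploiting the separability and reflexivity needed for weak-$*$ compactness in $W^{1,\infty}([0,S];H^1(\Om))$) we get $t_\delta \to t$ uniformly with $t_\delta' \weakstarto t'$ in $L^\infty(0,S)$, and $\tilde z_\delta \to z$ uniformly in $H^1(\Om)$ with $\tilde z_\delta' \weakstarto z'$ in $L^\infty([0,S];H^1(\Om))$. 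The normalization $(c)$ then follows by lower semicontinuity of the norm under weak-$*$ convergence applied to~\eqref{e.32}. The irreversibility $(b)$ and the bounds $0 \le z \le 1$ pass to the limit from the corresponding properties of $z_\delta$ (item $(b)$ of Definition~\ref{d.Evolution}) via pointwise convergence, and $t(0)=0$, $t(S)=T$ follow since $\sigma_\delta(0)=0$ and $\sigma_\delta(T)\le S$ together with the constant extension on $[\sigma_\delta(T),S]$; the initial conditions $u(0)=u_0$, $z(0)=z_0$ are inherited from $\tilde u_\delta(0)=u_0$, $\tilde z_\delta(0)=z_0$.

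Next I would handle the displacement equilibrium $(d)$ and the strong $H^1$ convergence of $\tilde u_\delta(s)$. For fixed $s$, the family $\tilde u_\delta(s)$ is bounded in $H^1(\Om;\R^2)$ by~\eqref{e.30} and the uniform bound on $g$, so up to a subsequence it converges weakly; since $g(t_\delta(s)) \to g(t(s))$ in $H^1$ (continuity of $g$ from $[0,T]$ into $W^{1,p}\hookrightarrow H^1$) and $\tilde z_\delta(s) \rightharpoonup z(s)$ weakly in $H^1(\Om)$, Lemma~\ref{lemma1} gives both the minimality $u(s) \in \argmin\{\E(u,z(s)) : u = g(t(s)) \text{ on } \partial_D\Om\}$ and the strong convergence $\tilde u_\delta(s) \to u(s)$ in $H^1(\Om;\R^2)$; uniqueness of the minimizer upgrades this to convergence along the whole sequence for every $s$. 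This already yields the time regularity $(a)$ once one checks $u \in C([0,S];H^1(\Om;\R^2))$, which follows from Lemma~\ref{l.regLp} applied with $z(s)$ and $t(s)$, both of which are continuous in $s$.

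The crux is the energy balance $(e)$. The strategy is the usual two-sided argument. For the lower estimate ``$\ge$'', I would rescale the viscous energy balance~\eqref{e.enbaldelta} to the $s$-variable: writing $\F(\tilde u_\delta(s),\tilde z_\delta(s))' = \dot\F(u_\delta,z_\delta)\,t_\delta'(s)$ and using~\eqref{e.ODEviscosity} to rewrite $-\tfrac{\delta}{2}\|\dot z_\delta\|_2^2 - \tfrac1{2\delta}|\partial_z^-\F|^2 = -|\partial_z^-\F|\,\|\dot z_\delta\|_2$ (Young's inequality is an equality here), one gets, after multiplying by $t_\delta'$ and using $\|\tilde z_\delta'\|_2 = \|\dot z_\delta\|_2\,t_\delta'$, that $\F(\tilde u_\delta(s),\tilde z_\delta(s))' = -|\partial_z^-\F|(\tilde u_\delta(s),\tilde z_\delta(s))\,\|\tilde z_\delta'(s)\|_2 + \mathcal P(\tilde u_\delta(s),\tilde z_\delta(s),\dot g(t_\delta(s)))\,t_\delta'(s)$; integrating on $[0,s]$ and passing to the liminf using the lower semicontinuity of $\F$ (Lemma~\ref{lsc-F}), of the slope (Lemma~\ref{prop1}, which needs the strong $H^1$-convergence of $\tilde u_\delta(s)$ just established and is why that step comes first), weak-$*$ lower semicontinuity of $\int \|\tilde z_\delta'\|_2$, and continuity of the power term $\mathcal P$, produces the inequality $\F(u(s),z(s)) \le \F(u_0,z_0) - \int_0^s |\partial_z^-\F|(u,z)\|z'\| + \int_0^s \mathcal P(u,z,\dot g(t))\,t'$. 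For the opposite inequality I would use the chain rule for $s \mapsto \F(u(s),z(s))$ (as in~\cite[Corollary~2.9]{MR3021776}, valid since $z \in W^{1,\infty}([0,S];H^1(\Om))$ and $u(s)$ is the unique elastic minimizer, so that $\partial_u\F$ along $u$ contributes only through $\dot g$) together with Lemma~\ref{SlopeLemma}, exactly as in~\eqref{e.25}--\eqref{e.26}. Combining the two inequalities gives~\eqref{e.viscousenergybalance} and the absolute continuity of $s\mapsto\F(u(s),z(s))$.

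The main obstacle I anticipate is the lower semicontinuity of the slope term $\int_0^s |\partial_z^-\F|(\tilde u_\delta,\tilde z_\delta)\|\tilde z_\delta'\|_2$ under the available convergences: one has only weak-$*$ convergence of $\tilde z_\delta'$, so a Fatou/Ioffe-type argument is needed to pass to the liminf in a product of a lower-semicontinuous (but not continuous) integrand and a weakly-$*$-converging factor, and this is precisely where the strong pointwise convergence $\tilde u_\delta(s)\to u(s)$ in $H^1$ and Lemma~\ref{prop1} are indispensable. A secondary technical point is to justify differentiating the composition $\F(\tilde u_\delta(s),\tilde z_\delta(s))$ and the change of variables $t = t_\delta(s)$ rigorously on the (a.e.) set where $t_\delta$ is differentiable, handling separately the interval $[\sigma_\delta(T),S]$ where everything is constant.
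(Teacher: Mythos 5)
Your proposal is correct and follows essentially the same route as the paper: compactness from~\eqref{e.32}, Lemma~\ref{lemma1} for the displacement equilibrium and strong pointwise $H^1$ convergence of $\tilde u_\delta(s)$, rewriting~\eqref{e.enbaldelta} via~\eqref{e.ODEviscosity} (Young's equality) into the reparametrized form~\eqref{e.33}, a lower semicontinuity result for normal integrands (the paper invokes Balder's theorem) to handle the slope term, and the chain rule together with Lemma~\ref{SlopeLemma} for the upper energy estimate. One small inaccuracy: Arzel\`a--Ascoli cannot give \emph{uniform} convergence of $\tilde z_\delta$ in $H^1(\Om)$ (an equi-Lipschitz family with values in an infinite-dimensional space need not be compact), only weak-$*$ convergence in $W^{1,\infty}([0,S];H^1(\Om))$ and pointwise weak $H^1(\Om)$ convergence, which is exactly what the paper asserts and which suffices for the rest of the argument.
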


\begin{proof}
By \eqref{e.32} and Definition~\ref{d.Evolution},~$t_{\delta}$ and~$\tilde{z}_{\delta}$ are bounded in~$W^{1,\infty}(0,S)$ and $W^{1,\infty}([0,S]; H^{1}(\Om))$, respectively. Hence, up to subsequence, we have that $t_{\delta} \rightharpoonup t$ weakly$^{*}$ in~$W^{1,\infty}(0,S)$ and $\tilde{z}_{\delta} \rightharpoonup z$ weakly$^{*}$ in $W^{1,\infty}([0,S]; H^{1}(\Om))$, where $z$ is decreasing w.r.t.~$s$ and satisfies $0 \leq z( s ) \leq 1$ for every $s \in [0,S]$. As for $\tilde{u}_{\delta}$, from~\eqref{e.30} we infer that~$\tilde{u}_{\delta}$ is bounded in $L^{\infty}([0,S];H^{1}(\Om;\R^{2}))$. Thus, for every $s \in [0,S]$ there exists~$u(s) \in H^{1}(\Om;\R^{2})$ such that, up to an $s$-dependent subsequence, $\tilde{u}_{\delta} (s) \rightharpoonup u(s)$ weakly in~$H^{1}(\Om;\R^{2})$. Lemma~\ref{lemma1} implies that~$u(s)$ satisfies~(d) of Definition~\ref{d.vanevolution} for every~$s \in [0,S]$ and, by uniqueness of minimizers, $\tilde{u}_{\delta}(s) \to u(s)$ in~$H^{1}(\Om;\R^{2})$ for every $s \in [0,S]$ along a unique subsequence independent of~$s$.

By lower semicontinuity,~\eqref{e.32} implies that (c) of Definition~\ref{d.vanevolution} holds. By a change of variable in the energy balance~\eqref{e.enbaldelta} we obtain
\begin{equation}\label{e.33}
\begin{split}
\F(\tilde{u}_{\delta}(s) , \tilde{z}_{\delta}(s)) = & \ \F(u_0, z_0) - \int_{0}^{s} |\partial_{z}^{-} \F| (\tilde{u}_{\delta}(\sigma) , \tilde{z}_{\delta}(\sigma)) \| \tilde{z}'_{\delta} (\sigma) \|_{2} \, \di\sigma 
\\
&
+ \int_{0}^{s} \mathcal{P}(\tilde{u}_{\delta}(\sigma), \tilde{z}_{\delta}(\sigma), \dot{g}(t_{\delta}(\sigma))) \, t_{\delta}'(\sigma)\, \di \sigma \,.
\end{split}
\end{equation}
We now pass to the liminf as $\delta\to 0$ in~\eqref{e.33}. By Lemma~\ref{lsc-F} and continuity of the power~$\mathcal{P}$ we have that
\begin{displaymath}
\begin{split}
\F( u (s) , z (s)) \leq & \ \F(u_0, z_0) - \liminf_{\delta\to 0} \int_{0}^{s} |\partial_{z}^{-} \F| (\tilde{u}_{\delta}(\sigma) , \tilde{z}_{\delta}(\sigma)) \| \tilde{z}'_{\delta} (\sigma) \|_{2} \, \di\sigma 
\\
&
+ \int_{0}^{s} \mathcal{P}(u (\sigma), z( \sigma), \dot{g}(t (\sigma))) \, t ' (\sigma)\, \di \sigma \,.
\end{split}
\end{displaymath}
Applying~\cite[Theorem~3.1]{Balder_RCMP85} we deduce the lower semicontinuity of the second integral on the right-hand side of~\eqref{e.33}, so that we conclude the lower energy inequality
\begin{displaymath}
\F( u (s) , z (s)) \leq  \F(u_0, z_0) -  \int_{0}^{s} |\partial_{z}^{-} \F| (u (\sigma) , z (\sigma)) \| z' (\sigma) \|_{2} \, \di\sigma 
+ \int_{0}^{s} \mathcal{P}(u (\sigma), z (\sigma), \dot{g}(t (\sigma))) \, t ' (\sigma)\, \di \sigma \,.
\end{displaymath}
The opposite inequality is, as in the proof of Theorem~\ref{t.viscous}, a consequence of the chain rule. Indeed, being $z\in W^{1,\infty}([0,S]; H^{1}(\Om))$, we can write for every $s\in[0,S]$
\begin{equation}\label{e.chainrule}
\F( u (s) , z (s)) =  \F(u_0, z_0) + \int_{0}^{s} \partial_{z} \F ( u (\sigma) , z (\sigma)) [ z' (\sigma) ] \, \di \sigma 
+ \int_{0}^{s} \mathcal{P}(u (\sigma), z (\sigma), \dot{g}(t (\sigma))) \, t ' (\sigma)\, \di \sigma \,.
\end{equation}
Since $ \| z'(\sigma) \|_{2} \leq 1$ for a.e.~$\sigma\in[0,S]$, the previous equality implies the energy balance~(e) of Definition~\ref{d.vanevolution}, and the proof is thus concluded.
\end{proof}

We now collect some properties satisfied by the evolutions $(t, u, z)$ constructed in Theorem~\ref{t.vanevolution}. In order to do this, we first recall a result on the representation of linear functional on~$H^{1}(\Om)$ (see, for instance,~\cite[Lemma~A.3 and Corollary~A.4]{Negri_ACV} and~\cite{MR1327457}).

\begin{lemma}\label{l.representation}
Let $\zeta \in (H^{1}(\Om))'$ be such that
\begin{displaymath}
\sup \, \{ \left \langle \zeta, \varphi \right\rangle : \, \varphi \in H^{1}(\Om),\, \varphi \geq 0,\, \|\varphi\|_{2} \leq 1\} < +\infty\,.
\end{displaymath}
Then, $\zeta$ is a finite Radon measure whose positive part $\zeta_{+} \in L^{2}(\Om)$. Moreover, if
\begin{displaymath}
\bar{\varphi} \in \argmax \,\{  \left\langle \zeta, \varphi \right\rangle : \, \varphi \in H^{1}(\Om),\, \varphi \geq 0,\, \|\varphi\|_{2} \leq 1\}\,,
\end{displaymath}
then $\zeta_{+} = \bar{\varphi} \| \zeta_{+}\|_{2}$.
\end{lemma}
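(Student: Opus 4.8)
The plan is to produce an $L^2$ majorant $w$ of $\zeta$ on the positive cone of $H^1(\Om)$ by a Hahn--Banach argument, to deduce from it that $\zeta$ is a finite signed measure with positive part in $L^2$, and finally to read the characterization of the maximizer off an equality case in the Cauchy--Schwarz inequality. Set $s:=\sup\{\langle\zeta,\varphi\rangle:\varphi\in H^1(\Om),\ \varphi\ge0,\ \|\varphi\|_2\le1\}$, which is finite by hypothesis and nonnegative (take $\varphi=0$); by positive homogeneity the hypothesis is equivalent to $\langle\zeta,\varphi\rangle\le s\|\varphi\|_2$ for every $\varphi\in H^1(\Om)$ with $\varphi\ge0$. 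First I would introduce the sublinear functional $p(\psi):=s\|\psi^+\|_2$ on $H^1(\Om)$ (sublinear since $(\psi_1+\psi_2)^+\le\psi_1^++\psi_2^+$), which dominates $\zeta$ on the convex cone $\mathcal C:=\{\varphi\in H^1(\Om):\varphi\ge0\}$. A Hahn--Banach sandwich argument then yields a linear functional $\mu$ on $H^1(\Om)$ with $\mu\le p$ everywhere and $\mu\ge\zeta$ on $\mathcal C$. From $\mu\le p$ one gets $|\mu(\psi)|\le s\|\psi\|_2$, so $\mu$ extends to $L^2(\Om)$ and is represented by some $w\in L^2(\Om)$ with $\|w\|_2\le s$; testing with $\psi\le0$ gives $\mu(\psi)\le p(\psi)=0$, so $w\ge0$ a.e., and by construction $\langle\zeta,\varphi\rangle\le\int_\Om w\varphi\,\di x$ for every $\varphi\in\mathcal C$.

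Next I would observe that $\zeta_0:=w\,\di x-\zeta\in(H^1(\Om))'$ satisfies $\langle\zeta_0,\varphi\rangle\ge0$ for all $\varphi\in\mathcal C$, so as a distribution it is a nonnegative Radon measure, finite because $\zeta_0(\Om)\le\langle\zeta_0,1\rangle<+\infty$ (using $1\in H^1(\Om)$ and $1-\varphi\in\mathcal C$ for $0\le\varphi\le1$). Hence $\zeta=w\,\di x-\zeta_0$ is a finite signed Radon measure, and, being the difference of two nonnegative measures, its Jordan positive part $\zeta_+$ satisfies $\zeta_+\le w$, whence $\zeta_+\in L^2(\Om)$ with $\|\zeta_+\|_2\le s$; this is the first assertion. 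I would record here that $\zeta$, being simultaneously a measure and an element of $(H^1(\Om))'$, pairs with (bounded) $H^1$-functions by integration against the measure~$\zeta$ --- this uses that a nonnegative measure in $(H^1(\Om))'$ does not charge sets of zero $H^1$-capacity and integrates quasicontinuous representatives, which is precisely the content of~\cite{MR1327457}.

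Finally, let $\bar\varphi$ realize the maximum, so $\langle\zeta,\bar\varphi\rangle=s$. Using the integration property of~$\zeta$ (for the possibly unbounded $\bar\varphi$, after truncating and letting the truncation level go to infinity, the finiteness of $s$ ensuring integrability of $\bar\varphi$ against $\zeta$) one obtains
\begin{displaymath}
s=\langle\zeta,\bar\varphi\rangle=\int_\Om\bar\varphi\,\di\zeta\le\int_\Om\bar\varphi\,\zeta_+\,\di x\le\|\zeta_+\|_2\,\|\bar\varphi\|_2\le\|\zeta_+\|_2\le s,
\end{displaymath}
where the first inequality uses $\bar\varphi\ge0$ and nonnegativity of the negative part of~$\zeta$, and the last uses $\|\zeta_+\|_2\le s$ from the previous step. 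Hence all the inequalities are equalities: in particular $\|\bar\varphi\|_2=1$, $\|\zeta_+\|_2=s$, and equality in Cauchy--Schwarz (with $\zeta_+,\bar\varphi\ge0$) forces $\zeta_+=\|\zeta_+\|_2\,\bar\varphi$, which is the claim. I expect the main obstacles to be the Hahn--Banach sandwich on the cone~$\mathcal C$ and --- more delicate --- the identification of the functional $\zeta\in(H^1(\Om))'$ with integration against its associated measure, for which the capacitary/potential-theoretic input of~\cite{MR1327457} is essential; the concluding Cauchy--Schwarz step is then routine.
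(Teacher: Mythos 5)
The paper does not actually prove this lemma: it is recalled with a pointer to the two references cited there (a lemma and corollary from Negri's appendix, together with the potential\dash theoretic reference), so the relevant question is whether your reconstruction is correct and leans on the same essential input. It is, and it does. The Mazur--Orlicz (Hahn--Banach sandwich) step on the positive cone produces a nonnegative majorant $w\in L^{2}(\Om)$ with $\|w\|_{2}\le s$; then $\zeta_{0}:=w\,\di x-\zeta\ge 0$ is a finite Radon measure (test against $1\in H^{1}(\Om)$, $\Om$ being bounded), and the minimality of the Jordan decomposition gives $0\le\zeta_{+}\le w\,\di x$, hence $\zeta_{+}\in L^{2}(\Om)$ with $\|\zeta_{+}\|_{2}\le s$. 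You also correctly isolate the genuinely delicate point---identifying the $(H^{1}(\Om))'$ pairing of $\zeta$ with integration of quasicontinuous representatives against the measure $\zeta$---as the content of the second cited reference, and the truncation needed to handle a possibly unbounded maximizer $\bar\varphi$ is handled correctly (monotone convergence, using finiteness of $\langle\zeta,\bar\varphi\rangle$ and of $\int_{\Om}\bar\varphi\,\zeta_{+}\,\di x$). The concluding chain $s=\langle\zeta,\bar\varphi\rangle\le\int_{\Om}\bar\varphi\,\zeta_{+}\,\di x\le\|\zeta_{+}\|_{2}\|\bar\varphi\|_{2}\le\|\zeta_{+}\|_{2}\le s$ forces equality throughout, and equality in Cauchy--Schwarz between nonnegative functions yields $\zeta_{+}=\|\zeta_{+}\|_{2}\,\bar\varphi$ (and, in passing, $\|\bar\varphi\|_{2}=1$ and $\|\zeta_{+}\|_{2}=s$). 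One presentational gap worth closing: since $\mathcal{C}$ is a cone and not a subspace, ``a Hahn--Banach sandwich argument then yields'' should be backed by verifying the Mazur--Orlicz compatibility condition, namely $\sum_{i}\lambda_{i}\langle\zeta,\varphi_{i}\rangle\le p\bigl(\sum_{i}\lambda_{i}\varphi_{i}\bigr)$ for all finite families $\lambda_{i}\ge 0$, $\varphi_{i}\in\mathcal{C}$; this holds trivially because $\sum_{i}\lambda_{i}\varphi_{i}\in\mathcal{C}$, $\zeta$ is linear, and $\zeta\le p$ on $\mathcal{C}$, but it is the hypothesis that makes the sandwich applicable and deserves to be stated.
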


\begin{proposition}\label{p.PDE}
Let $(t, u, z)\colon[0,S]\to [0,T]\times H^{1}(\Om;\R^{2})\times H^{1}(\Om)$ be the vanishing viscosity evolution found in Theorem~\ref{t.vanevolution}, and set $U\coloneq \{ s\in[0,S]: \, t(\cdot) \text{ is constant in a neighborhood of~$s$}\}$. Then,  the following facts hold:
\begin{itemize}
\item[$(a)$] for a.e.~$s\in[0,S]$, $|\partial_{z}^{-}\F|(u(s), z(s)) \| z'(s)\|_{2} = - \partial_{z} \F(u(s), z(s)) [z'(s)]$; 

\item[$(b)$] for a.e.~$s\in[0,S]$,
\begin{displaymath}
z'(s) \| \big( \partial_{z} W(z(s), \strain(u(s))) - \Delta z(s) + f'((z(s)) \big)_{+} \|_{2} =\| z' (s) \|_{2} \big( \partial_{z} W(z(s), \strain(u(s))) - \Delta z(s) + f'((z(s)) \big)_{+} ;
\end{displaymath}

\item[$(c)$] for every $s\in[0,S]\setminus U$, $|\partial_{z}\F| (u(s), z(s)) =0$ and
\begin{equation}\label{e.PDE}
\big( \partial_{z} W(z(s), \strain(u(s))) - \Delta z(s) + f'((z(s)) \big)_{+} = 0 \qquad\text{in $\Om$}\,.
\end{equation}
\end{itemize}
\end{proposition}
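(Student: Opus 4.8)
The plan is to derive the three identities from the energy balance~(e) of Definition~\ref{d.vanevolution} combined with the chain rule~\eqref{e.chainrule}, and then to apply the representation Lemma~\ref{l.representation} to the functional $\zeta = -\partial_z \F(u(s), z(s))$. For part~$(a)$, I would subtract the chain rule~\eqref{e.chainrule} from the energy balance~\eqref{e.viscousenergybalance} (both written on an arbitrary subinterval $[s_1, s_2]$ since both hold for a.e.\ $s$ and the integrands are integrable): this gives
\begin{displaymath}
\int_{s_1}^{s_2} \big( \partial_z \F(u(\sigma), z(\sigma))[z'(\sigma)] + |\partial_z^- \F|(u(\sigma), z(\sigma)) \| z'(\sigma)\|_2 \big)\, \di\sigma = 0 \,.
\end{displaymath}
By Lemma~\ref{SlopeLemma} the integrand is pointwise $\geq 0$, hence it vanishes for a.e.\ $s$, which is exactly~$(a)$.

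For part~$(b)$, I would fix $s$ in the full-measure set where~$(a)$ holds and apply Lemma~\ref{l.representation} to $\zeta_s \coloneq -\partial_z \F(u(s), z(s)) \in (H^1(\Om))'$; the relevant supremum is finite because it equals $|\partial_z^-\F|(u(s), z(s))$ by Lemma~\ref{SlopeLemma}, which is finite. Lemma~\ref{l.representation} then tells us $\zeta_{s,+} \in L^2(\Om)$ and, distributionally, $\zeta_s = -\big(\partial_z W(z(s), \strain(u(s))) - \Delta z(s) + f'(z(s))\big)$, so $\zeta_{s,+} = \big(\partial_z W(z(s), \strain(u(s))) - \Delta z(s) + f'(z(s))\big)_+$. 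The maximizer in the variational characterization of $|\partial_z^-\F|$ is, by Lemma~\ref{SlopeLemma} and part~$(a)$, the normalized direction $\bar\varphi = z'(s)/\|z'(s)\|_2$ (when $z'(s)\neq 0$; the case $z'(s)=0$ is trivial), noting $z'(s)\leq 0$ so $-z'(s)\geq 0$ is admissible in the sup over $\varphi \leq 0$. The last assertion of Lemma~\ref{l.representation} gives $\zeta_{s,+} = \bar\varphi \|\zeta_{s,+}\|_2$, which upon multiplying through by $\|z'(s)\|_2$ is precisely the stated identity in~$(b)$.

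For part~$(c)$, I would show first that $|\partial_z^-\F|(u(s), z(s)) = 0$ for $s\in[0,S]\setminus U$. The key point is that $[0,S]\setminus U$ is, up to the boundary, contained in the closure of the set where $t' > 0$; more precisely, if $s\notin U$ then $t$ is not locally constant at $s$, so either $t'>0$ on a sequence of times accumulating at $s$, or — using that $t$ is monotone and Lipschitz with $t'+\|z'\|_{H^1}=1$ — one can find times $s_n \to s$ with $t'(s_n)\to$ something positive in an averaged sense. Combined with the normalization $t' + \|z'\|_{H^1}\leq 1$ and the lower semicontinuity of the slope (Lemma~\ref{prop1}), together with the fact that at any $s$ with $t'(s)>0$ the equilibrium in the displacement must be accompanied by a stationary phase field (since otherwise the energy balance~(e) forces $\|z'\|_2>0$, contradicting $t'+\|z'\|_{H^1}\le1$ being tight), one deduces $|\partial_z^-\F|(u(s),z(s))=0$. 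Actually the cleanest route: on $[0,S]\setminus U$, by~$(a)$ and the bound $t'+\|z'\|_{H^1}=1$ a.e., one argues that the slope vanishes by passing to the limit in the viscous relation $\delta\|\dot z_\delta\|_2 = |\partial_z^-\F|(u_\delta, z_\delta)$ reparametrized (eq.~\eqref{e.ODEviscosity}), using that near points of $[0,S]\setminus U$ the rescaled viscous time derivative $\delta \|\dot z_\delta\|/(1+\|\dot z_\delta\|_{H^1})$ vanishes in the limit. Once $|\partial_z^-\F|(u(s), z(s)) = 0$ is established, Lemma~\ref{SlopeLemma} gives $-\partial_z\F(u(s), z(s))[\varphi] \leq 0$ for all $\varphi \leq 0$ with $\|\varphi\|_2 \leq 1$, hence $\zeta_{s,+} = 0$ in $L^2$, which is~\eqref{e.PDE}.

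\textbf{Main obstacle.} The delicate step is part~$(c)$, specifically making rigorous why the slope vanishes on $[0,S]\setminus U$: this requires carefully relating the structure of the set $U$ (where the reparametrized time stalls) to the behavior of the viscous solutions under the arc-length rescaling, and invoking the slope's lower semicontinuity at the right points. Parts~$(a)$ and~$(b)$ are essentially bookkeeping once the chain rule and Lemma~\ref{l.representation} are in hand.
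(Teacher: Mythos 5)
Your plan for part~$(a)$ matches the paper's: subtract the chain rule~\eqref{e.chainrule} from the energy balance~\eqref{e.viscousenergybalance}; since the integrand $\partial_z\F[z']+|\partial_z^-\F|\,\|z'\|_2$ is pointwise nonnegative by Lemma~\ref{SlopeLemma}, it must vanish a.e.

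For part~$(b)$ the approach (invoke Lemma~\ref{l.representation}) is also the paper's, but your signs are off in a way that would make the lemma inapplicable as stated. Lemma~\ref{l.representation} requires finiteness of $\sup\{\langle\zeta,\varphi\rangle:\varphi\geq0,\|\varphi\|_2\leq1\}$. With your choice $\zeta_s=-\partial_z\F(u(s),z(s))$ this supremum is $\sup\{-\partial_z\F[\varphi]:\varphi\geq0,\|\varphi\|_2\leq1\}$, which is \emph{not} $|\partial_z^-\F|$ (the slope is the supremum over $\varphi\leq0$) and is in general infinite, since the term $-\int\nabla z\cdot\nabla\varphi$ has no sign and $\|\varphi\|_2\leq1$ does not control $\nabla\varphi$. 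The correct choice is $\zeta=+\partial_z\F(u(s),z(s))$: then, via $\psi=-\varphi$, $\sup\{\langle\zeta,\varphi\rangle:\varphi\geq0,\|\varphi\|_2\leq1\}=|\partial_z^-\F|(u(s),z(s))<\infty$, and the argmax is $\bar\varphi=-z'(s)/\|z'(s)\|_2\geq0$ (not $z'(s)/\|z'(s)\|_2$, which is $\leq0$ and hence not admissible in the lemma's class). Your further claim that $\zeta_{s,+}=(\partial_z W-\Delta z+f')_+$ for $\zeta_s=-\partial_z\F$ also carries a sign error, since $(-x)_+=x_-$, not $x_+$. The two slips cancel in your final formula, which is why you land on the stated identity, but the intermediate reasoning would not survive scrutiny.

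Part~$(c)$ is where the genuine gap lies, and your sketch does not close it. The intermediate claim that at points with $t'(s)>0$ the phase field must be stationary ``since otherwise the energy balance forces $\|z'\|_2>0$, contradicting $t'+\|z'\|_{H^1}\le1$ being tight'' is false: $t'(s)>0$ and $\|z'(s)\|_2>0$ coexist perfectly well under $t'(s)+\|z'(s)\|_{H^1}\leq1$ (which, for the limit, is only an inequality, not an equality). Your alternative route (``near points of $[0,S]\setminus U$ the rescaled viscous time derivative vanishes in the limit'') is not made precise and, more importantly, pushes in the direction that lower semicontinuity of the slope does not support: Lemma~\ref{prop1} gives $|\partial_z^-\F|(u(s),z(s))\leq\liminf_\delta|\partial_z^-\F|(u_\delta(s),z_\delta(s))$, which lets you propagate \emph{largeness} of the slope from the limit to the approximations, not smallness. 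The paper therefore argues by contraposition: set $A\coloneq\{s:|\partial_z^-\F|(u(s),z(s))>0\}$ and show $A\subseteq U$. By Lemma~\ref{prop1} and continuity of $s\mapsto(u(s),z(s))$ the set $A$ is open; on an interval $(s_1,s_2)\ni\bar s$ where $|\partial_z^-\F|(u(s),z(s))>C>0$, lower semicontinuity along the viscous family gives $\liminf_\delta|\partial_z^-\F|(u_\delta(s),z_\delta(s))\geq C$ for $s\in(s_1,s_2)$. Combining the reparametrization identity $t'_\delta(s)=1/(1+\|z'_\delta(s)\|_{H^1})$ with~\eqref{e.ODEviscosity} yields $t'_\delta(s)\leq\delta/(\delta+|\partial_z^-\F|(u_\delta(s),z_\delta(s)))$, hence $\int_{s_1}^{s_2}t'_\delta\,\di s\to0$, and by weak$^*$ convergence of $t_\delta$ one gets $\int_{s_1}^{s_2}t'\,\di s=0$, i.e.\ $t$ is constant on $(s_1,s_2)$, so $\bar s\in U$. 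This contraposition is the key idea missing from your sketch.
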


\begin{proof}
The equality in~(a) follows from the chain rule~\eqref{e.chainrule} and the energy equality~\eqref{e.viscousenergybalance}. In particular, we deduce that, whenever $\| z'(s)\|_{2} \neq 0$,
\begin{equation}\label{e.z'}
\frac{z'(s)}{\| z' (s) \|_{2}} \in \argmax \, \{ -\partial_{z} \F(u(s), z(s)) [\varphi] : \, \varphi\in H^{1}(\Om), \,\varphi\leq 0,\, \|\varphi\|_{2} \leq 1\}\,.
\end{equation}
Hence, we infer from Lemma~\ref{l.representation} that
\begin{equation}
 \big( \partial_{z} W(z(s), \strain(u(s))) - \Delta z(s) + f'((z(s)) \big)_{+} = \big( \partial_{z} \F(u(s), z(s)) \big)_{+} \in L^{2}(\Om)\,,
 \end{equation}
 \begin{displaymath}
 z'(s) \| \big( \partial_{z} W(z(s), \strain(u(s))) - \Delta z(s) + f'((z(s)) \big)_{+} \|_{2} =\| z' (s) \|_{2} \big( \partial_{z} W(z(s), \strain(u(s))) - \Delta z(s) + f'((z(s)) \big)_{+} ,
\end{displaymath}
where the last equality is trivially extended to the case $\| z'(s)\|_{2} =0$. Thus, also (b) holds.

In order to prove (c) we argue as in~\cite[Theorem~5.4]{MR3454016}. Namely, we show that the set
\begin{displaymath}
A \coloneq \{ s\in (0,S) : \, |\partial_{z}^{-}\F| (u(s), z(s)) >0\}
\end{displaymath}
is contained in~$U$. First, we notice that, in view of Lemma~\ref{prop1} and of the continuity of $s\mapsto (u(s), z(s))$ as a map with values in $H^{1}(\Om;\R^{2})\times H^{1}(\Om)$, the set~$A$ is open in~$(0,S)$. Let $\bar s\in A$ and $C>0$ be such that $|\partial_{z}^{-} \F| (u(\bar s), z(\bar s)) >C >0$. Again by Lemma~\ref{prop1}, there exist $s_{1}< \bar s < s_{2}$ such that $(s_{1}, s_{2}) \subseteq A$ and
\begin{equation}\label{e.bars}
|\partial_{z}^{-}\F|(u(s), z(s)) >C \qquad\text{for every $s\in(s_{1}, s_{2})$}\,.
\end{equation}
Since, by Theorem~\ref{t.vanevolution}, $t_{\delta}(s) \to t(s)$, $ u_{\delta}(s) \to u(s)$ in~$H^{1}(\Om;\R^{2})$, and $z_{\delta}(s) \rightharpoonup z(s)$ weakly in~$H^{1}(\Om)$, the lower semicontinuity of the slope~$|\partial_{z}^{-}\F|$ implies that
\begin{equation}\label{e.bars2}
\liminf_{\delta\to 0} |\partial_{z}^{-}\F|( u_{\delta}(s), z_{\delta}(s)) \geq C\,.
\end{equation}

By definition of~$t_{\delta}$ and by~\eqref{e.ODEviscosity} we have that
\begin{displaymath}
\int_{s_{1}}^{s_{2}} t'_{\delta}(s)\, \di s = \int_{s_{1}}^{s_{2}} \frac{1}{1+ \| z'_{\delta}(s) \|_{H^{1}}}\, \di s \leq \int_{s_{1}}^{s_{2}} \frac{\delta}{\delta + |\partial_{z}^{-}\F| (u_{\delta}(s), z_{\delta}(s))} \, \di s\,. 
\end{displaymath}
Passing to the limsup as $\delta\to 0$ in the previous equality, thanks to the convergence of~$t_{\delta}$ to~$t$ weak$^*$ in~$W^{1,\infty}(0,S)$ and to~\eqref{e.bars2} we obtain
\begin{displaymath}
\int_{s_{1}}^{s_{2}} t'(s)\,\di s = 0\,.
\end{displaymath}
Being~$t$ absolutely continuous, we have that $t$ is constant in the interval $(s_{1}, s_{2})\ni \bar s$. Hence, every element of~$A$ has a neighborhood in which $t(\cdot)$ is constant. Thus, $A\subseteq U$. It follows that $|\partial_{z}^{-}\F|(u(s), z(s)) =0 $ for every $s\in (0,S)\setminus U$. Finally,~\eqref{e.PDE} follows from the inequality
\begin{displaymath}
\partial_{z}\F (u(s), z(s)) [\varphi] \geq 0 \qquad\text{for every $\varphi\in H^{1}(\Om)$ with $\varphi\leq 0$}\,.
\end{displaymath}
\end{proof}

\bibliographystyle{siam}
\bibliography{Almi_19-biblio}

\end{document}